\newtheorem{thm}{\sc Theorem.}[section]
\newtheorem{lem}[thm]{\sc Lemma.}
\newtheorem{rem}[thm]{\sc Remark.}
\renewcommand{\theequation}{\arabic{section}.\arabic{equation}}
\newenvironment{AMS}%
{{\upshape\bfseries AMS subject classifications. }\ignorespaces}{}
\newenvironment{keywords}{{\upshape\bfseries Key words. }\ignorespaces}{}
\newcommand{\bRplus}{{\mathbb R}_{>0}}
\newcommand{\bRgeq}{{\mathbb R}_{\geq 0}}
\newcommand{\RZ}{{\mathbb R} \slash {\mathbb Z}}
\newcommand{\RpisZ}{{\mathbb R} \slash (2\,\pi\,\mathfrak s\,{\mathbb Z})}
\newcommand{\RpiZ}{{\mathbb R} \slash (2\,\pi\,{\mathbb Z})}
\newcommand{\bR}{{\mathbb R}}
\newcommand{\bN}{{\mathbb N}}
\newcommand{\bD}{{\mathbb D}}
\newcommand{\bH}{{\mathbb H}}
\newcommand{\distg}{\operatorname{dist}_g}
\newcommand{\spa}{\operatorname{span}}
\newcommand{\ratio}{{\mathfrak r}}
\newcommand{\drho}{\;{\rm d}\rho}
\newcommand{\dz}{\;{\rm d}\vec z}
\newcommand{\Vh}{\underline{V}^h}
\newcommand{\Vhpartial}{\underline{V}^h_\partial}
\newcommand{\nabsg}{\nabla_{\!s_g}}
\newcommand{\Id}{\rm Id}
\newcommand{\id}{\rm id}
\newcommand{\dd}[1]{\frac{\rm d}{{\rm d}#1}}
\newcommand{\ddt}{\dd{t}}
\newcommand{\normal}{{\rm n}}
\newcommand{\ek}{e}
\newcommand{\ttau}{\Delta t}
\newcommand{\BGNmckappa}{\mathcal{A}}
\newcommand{\GDmckappa}{\mathcal{B}}
\newcommand{\BGNmc}{\mathcal{C}}
\newcommand{\GDmc}{\mathcal{D}}
\newcommand{\BGNsd}{\mathcal{E}}
\newcommand{\BGNsdstab}{\mathcal{F}}
\newcommand{\BGNwf}{\mathcal{U}}
\newcommand{\BGNwfwf}{\mathcal{W}}
\def\epsilon{\varepsilon}
\newcommand{\mat}[1]{\underline{\underline{#1}}\rule{0pt}{0pt}}
\newcommand{\myz}{{\mathfrak z}}
\newcommand{\errorXx}{\|\Gamma - \Gamma^h\|_{L^\infty}}
\def\arraystretch{1.15}
\begin{document}
\title{
Numerical approximation of curve evolutions \\ in Riemannian manifolds
\\
}
\author{John W. Barrett\footnotemark[2] \and 
        Harald Garcke\footnotemark[3]\ \and 
        Robert N\"urnberg\footnotemark[2]}

\renewcommand{\thefootnote}{\fnsymbol{footnote}}
\footnotetext[2]{Department of Mathematics, 
Imperial College London, London, SW7 2AZ, UK.
email: \texttt{\{j.barrett|robert.nurnberg\}@imperial.ac.uk}}
\footnotetext[3]{Fakult{\"a}t f{\"u}r Mathematik, Universit{\"a}t Regensburg, 
93040 Regensburg, Germany. email: \texttt{harald.garcke@ur.de}}

\date{}

\maketitle

\begin{abstract}
We introduce variational
approximations for curve evolutions in two-dimensional
Riemannian manifolds that are conformally flat, i.e.\ 
conformally equivalent to the Euclidean space. Examples include the 
hyperbolic plane, the hyperbolic disk, the elliptic plane as well as any
conformal parameterization of a two-dimensional surface in 
$\bR^d$, $d\geq 3$. In these spaces we introduce stable numerical
schemes for curvature flow and curve diffusion, and we also formulate a
scheme for elastic flow. Variants of the schemes can also be applied to
geometric evolution equations for axisymmetric hypersurfaces in
$\bR^d$. Some of the schemes have very good properties with
respect to the distribution of mesh points, which is demonstrated
with the help of several numerical computations. 
\end{abstract} 

\begin{keywords} 
Riemannian manifolds, curve evolution equations, 
curvature flow, curve diffusion, elastic flow, 
hyperbolic plane, hyperbolic disk, elliptic plane,
geodesic curve evolutions, finite element approximation, equidistribution
\end{keywords}

\begin{AMS} 65M60, 53C44, 53A30, 35K55
\end{AMS}
\renewcommand{\thefootnote}{\arabic{footnote}}

\setcounter{equation}{0}
\section{Introduction} \label{sec:intro}
The evolution of curves in a two-dimensional manifold
driven by a velocity involving the (geodesic)
curvature of the curve appears in many situations in geometry and in
applications. 
Examples are curve straightening via the elastic energy
or image processing on surfaces. The first mathematical results on such
flows go back to the work of \cite{GageH86},
who studied curvature flow in the Euclidean plane. Later
evolutions in more complex ambient spaces have been studied, see
e.g.\ \cite{Grayson89,Cabezas-RivasM07,AndrewsC17}. 
In the Euclidean case it can be shown that closed
curves shrink to a point in finite time and they become more and more
round as they do so, see \cite{GageH86} and \cite{Grayson87}. In the
case of a general ambient space the solution behaviour is more
complex. For example, some solutions exist for arbitrary times and
others can become unbounded in finite or infinite time, see
e.g.\ \cite{Grayson89}.

Curvature flow is a second order flow. However, also fourth order
flows are of interest. Here we mention the elastic (Willmore) flow of
curves and curve diffusion, both of which are highly nonlinear. 
Elastic flow is the $L^2$--gradient flow of the elastic energy, and
in the hyperbolic plane and on the sphere it was recently studied by
\cite{DallAcquaS17preprint,DallAcquaS18preprint} and \cite{DallAcquaLLPS18},
respectively. 
The curve diffusion flow, sometimes also called surface diffusion flow, 
is the $H^{-1}$--gradient flow for the length of the curve, and, like 
the elastic flow, it also features second derivatives of the curvature. 

In this paper, we also want to study situations, in which a curve
evolves in a two-dimensional manifold that is not necessarily
embedded in $\bR^3$. An important example is the hyperbolic plane
$\mathbb{H}^2$, which due to Hilbert's classical theorem cannot be embedded
into $\bR^3$, see \cite{Hilbert1901} and e.g.\ \citet[\S11.1]{Pressley10}.
It will turn out that we can derive stable numerical
schemes for curve evolutions in two-dimensional Riemannian manifolds
that are conformally equivalent to the Euclidean space. This means
that charts exist such that in the parameter domain the metric tensor is a
possibly inhomogeneous 
scalar multiple of the classical Euclidean metric. This in particular
implies that the chart is angle preserving, we refer to
\cite{Kuhnel15} for more details.

The numerical approximation of the evolution of curves in an Euclidean ambient
space is very well developed, with many papers on parametric as well as
level set methods. We refer to \cite{DeckelnickDE05} for an
overview. However, for more general ambient spaces only a few papers
dealing with numerical methods exist. Some numerical work is devoted
to the evolution of curves on two-dimensional surfaces in
$\bR^3$. We refer to \cite{MikulaS06,curves3d,BenninghoffG16}
for methods using a parametric approach. Besides, also a level set
setting is possible in order to numerically move curves that are constrained on
surfaces, see \cite{ChengBMO02,SpiraK07}.

The setting in this paper is as follows. 
Let $I=\RZ$ be the periodic interval $[0,1]$ and let $\vec x : I \to \bR^2$ 
be a parameterization of a closed curve $\Gamma \subset \bR^2$.
On assuming that
\begin{equation} \label{eq:xrho}
|\vec x_\rho| \geq c_0 > 0 \qquad \forall\ \rho \in I\,,
\end{equation}
we introduce the arclength $s$ of the curve, i.e.\ $\partial_s =
|\vec{x}_\rho|^{-1}\,\partial_\rho$, and set
\begin{align}
\vec\tau(\rho) = \vec x_s(\rho) = 
\frac{\vec x_\rho(\rho)}{|\vec x_\rho(\rho)|} \qquad \mbox{and}
\qquad \vec\nu(\rho) = -[\vec\tau(\rho)]^\perp\,,
\label{eq:tau}
\end{align}
where $\cdot^\perp$ denotes a clockwise rotation by $\frac{\pi}{2}$.

On an open set $H\subset\mathbb{R}^2$ we define a metric tensor as
\begin{equation} \label{eq:g}
[(\vec v, \vec w)_g](\vec z) = g(\vec z)\,\vec v\,.\,\vec w \quad
\forall\ \vec v, \vec w \in \bR^2
\qquad \text{ for } \vec z \in H\,,
\end{equation}
where $\vec v\,.\,\vec w = \vec v^T\,\vec w$ is the standard Euclidean inner
product, and where $g:H \to \bRplus$ is a smooth positive weight 
function. This is the setting one obtains for a two-dimensional
Riemannian manifold that is conformally equivalent to the Euclidean
plane. In local coordinates the metric is precisely given by
\eqref{eq:g}, see e.g.\ \cite{Jost05,Kuhnel15,Schippers07}. 
Let us mention that a
two-dimensional Riemannian manifold locally allows for a conformal
chart, see e.g.\ \citet[\S5.10]{Taylor11I}. Examples of such
situations are the hyperbolic plane, the hyperbolic disc and the
elliptic plane. Other examples are given by curves on two-dimensional
surfaces in $\mathbb{R}^d$, $d\ge 3$, that can be conformally
parameterized, such as spheres without pole(s), catenoids and torii.
Coordinates $(x_1,x_2)\in H$ together with a metric
$g$ as in \eqref{eq:g} are called isothermal coordinates, i.e.\ in all
situations considered in this paper we assume that we have isothermal
coordinates. We refer to Section~\ref{sec:old1} and 
\citet[3.29 in \S3D]{Kuhnel15} for more information. 

For a time-dependent curve $\vec{x}$ the simplest curvature driven
flow is given as
\begin{equation}\label{eq:geod}
  \mathcal{V}_g = \varkappa_g\,.
\end{equation}
Here $\mathcal{V}_g = g^{\frac12}(\vec{x})\,\vec{x}_t\,.\,\vec{\nu}$ is the
normal velocity with respect to the metric \eqref{eq:g}, and
\begin{equation}\label{eq:kappag}
  \varkappa_g = g^{-\frac12}(\vec{x})\left[\varkappa -\tfrac12\,
  \vec{\nu}\,.\, \nabla\, \ln g(\vec{x})\right]
\end{equation}
is the curvature of the curve with respect to the metric $g$. The
vector $\vec{\nu}$, defined in \eqref{eq:tau} is the classical
Euclidean normal, and $\varkappa$ is the classical Euclidean curvature
of the curve. It satisfies the property
\begin{equation} \label{eq:varkappa}
\varkappa\,\vec\nu = \vec\varkappa = \vec\tau_s = \vec x_{ss} = 
\frac1{|\vec x_\rho|} \left[ \frac{\vec x_\rho}{|\vec x_\rho|} \right]_\rho,
\end{equation}
see \cite{DeckelnickDE05}.

In the Euclidean case, i.e.\ in the case
$g\equiv 1$, the right hand side in the curvature flow \eqref{eq:geod}
is equal to $\varkappa$, and in particular the parameterization
$\vec{x}$ only appears via $\vec{x}_\rho$, cf.\ \eqref{eq:kappag},
\eqref{eq:varkappa}. 
This is crucial for stability proofs for
numerical methods that have been introduced earlier,
cf.\ \cite{Dziuk88,triplej,DeckelnickDE05}. In the case of a general
ambient space, additional nonlinearities involving the variable
$\vec{x}$ itself appear in $\varkappa_g$, so that the variational structure of
(\ref{eq:varkappa}) is lost. This makes the design of stable
schemes highly non-trivial. 
In fact, no such schemes appear in the literature so far. 
We will introduce stable fully discrete
schemes with the help of a non-standard convex-concave splitting. In
particular, the splitting has to be chosen in terms of $g^{\frac12}$.
With the help of the splitting, we propose in 
Section~\ref{sec:fd} a semi-implicit scheme for which stability can be shown.

The outline of this paper is as follows. In Section~\ref{sec:old1} we
derive the governing equations for curvature flow, curve diffusion
and elastic flow, provide weak formulations and relate the introduced flows
to geometric evolution equations for axisymmetric
hypersurfaces. In Section~\ref{sec:fd} we introduce finite element
approximations and show existence and uniqueness as well as stability
results. Section~\ref{sec:nr} is devoted to several numerical results,
which demonstrate convergence rates as well as a qualitatively good
mesh behaviour. In two appendices we derive exact solutions and derive
the geodesic curve evolution equations for a conformal
parameterization. 

\setcounter{equation}{0}
\section{Mathematical formulations} \label{sec:old1}

It is the aim of this paper to introduce numerical schemes for the
situation where a curve $\Gamma = \vec x(I)$ evolves in a
two-dimensional Riemannian manifold that is conformally equivalent to
the Euclidean space. Curvature flow is the $L^2$--gradient flow of the
length functional and we first review how length is defined with
respect to the metric $g$. 
The length induced by (\ref{eq:g}) is defined as
\begin{equation} \label{eq:normg}
[|\vec v|_g](\vec z) = \left([(\vec v, \vec v)_g](\vec z) \right)^\frac12
= g^\frac12(\vec z)\,|\vec v| \quad \forall\ \vec v \in \bR^2
\qquad \text{ for } \vec z \in H\,.
\end{equation}
The distance between two points $\vec z_0$, $\vec z_1$ in $H$ is defined as
\begin{equation} \label{eq:distg}
\distg(\vec z_0, \vec z_1) = \inf \left\{
\int_0^1 [|\vec\gamma_\rho(\rho)|_g](\vec\gamma(\rho)) \drho : 
\vec\gamma \in C^1([0,1], H)\,,\ \vec\gamma(0) = \vec z_0\,,\ \vec\gamma(1)
= \vec z_1
\right\}.
\end{equation}
It can be shown that $(H,\distg)$ is a metric space, see
\citet[\S1.4]{Jost05}.

On recalling (\ref{eq:normg}), 
the total length of the closed curve $\Gamma \subset H$ is given by
\begin{equation} \label{eq:Lg}
L_g(\vec x) = \int_I [|\vec x_\rho|_g](\vec x) \drho
= \int_I g^\frac12(\vec x)\,|\vec x_\rho|\drho\,.
\end{equation}
If $\Gamma=\vec{x}(I)$ encloses a domain $\Omega \subset H$, with 
$\partial\Omega = \Gamma$, we define the total enclosed area as
\begin{equation} \label{eq:Ag}
A_g(\Omega) = \int_{\Omega} g(\vec z) \dz\,.
\end{equation}
For later use we observe that if $\Gamma = \vec x(I)=\partial\Omega$
is parameterized
clockwise, then $\vec\nu \circ \vec x^{-1}$, recall (\ref{eq:tau}), 
denotes the outer normal to
$\Omega$ on $\partial\Omega = \Gamma$. An anti-clockwise parameterization,
on the other hand, yields that $\vec\nu \circ \vec x^{-1}$ is the inner normal.

We remark that if we take (\ref{eq:g}) with
\begin{subequations}
\begin{equation} \label{eq:ghypbol}
g(\vec z) = (\vec z \,.\,\vec\ek_2)^{-2}
\quad \text{ and } \quad
H = \bH^2 := \{ \vec z \in \bR^2 : \vec z \,.\,\vec\ek_2 > 0 \}\,,
\end{equation}
then we obtain the Poincar\'e half-plane model
which serves as a model for the hyperbolic plane.
Clearly,
\begin{equation} \label{eq:geuclid}
g(\vec z) = 1 \quad \text{ and } \quad H = \bR^2 
\end{equation}
simplifies to the standard Euclidean situation.
In the context of the numerical approximation of geometric evolution
equations for axisymmetric surfaces in $\bR^3$, in the recent papers
\cite{aximcf,axisd} the authors considered gradient flows, and their numerical
approximation, of the energy
\begin{equation} \label{eq:A}
A_{\mathcal{S}}(\vec x) = 
2\,\pi\,\int_I \vec x\,.\,\vec\ek_2\,|\vec x_\rho| \drho\,.
\end{equation}
Here we note that as the authors in \cite{aximcf,axisd} considered surfaces
that are rotationally symmetric with respect to the $x_2$--axis, they in fact
considered (\ref{eq:A}) with $\vec\ek_2$ replaced by $\vec\ek_1$.
We note that (\ref{eq:Lg}) collapses to (\ref{eq:A}) for the choice
\begin{equation} \label{eq:ge1}
g(\vec z) = 4\,\pi^2\,(\vec z\,.\,\vec\ek_2)^2
\quad\text{ and } \quad
H = \bH^2\,.
\end{equation}
We also consider more general variants of (\ref{eq:ghypbol}), namely
\begin{equation} \label{eq:gmu}
g(\vec z) = (\vec z \,.\,\vec\ek_2)^{-2\,\mu}\,,\ \mu \in \bR\,,
\quad \text{ and } \quad H = \bH^2 \,,
\end{equation}
\end{subequations}
so that (\ref{eq:ghypbol}) corresponds to $\mu=1$, while formally
(\ref{eq:geuclid}) corresponds to $\mu=0$. As the latter choice leads to a
constant metric, a suitable translation of the initial data in the 
$\vec\ek_2$ direction will ensure that any evolution for (\ref{eq:geuclid}) is
confined to $\bH^2$, and so (\ref{eq:geuclid}) and (\ref{eq:gmu}) with $\mu=0$
are equivalent.
In addition, (\ref{eq:ge1}), up to the 
constant factor $4\,\pi^2$, corresponds to $\mu=-1$. 
For the evolution equations we consider in this paper, the 
constant factor $4\,\pi^2$ will only affect the time scale of the evolutions.

We remark that for $\mu \not= 1$ the metric space $\bH^2$ with
the metric (\ref{eq:distg}) induced by (\ref{eq:gmu}) is not complete.
To see this, we observe that the distance (\ref{eq:distg}) 
between $a\,\vec\ek_2$ and $b\,\vec\ek_2$, for $a<b$, is bounded from above by
\[
\int_a^b u^{-\mu}\,{\rm d}u = (1-\mu)^{-1}\left( b^{1-\mu} - a^{1-\mu}\right).
\]
Hence, in the case $\mu > 1$, 
the distance converges to zero as $a,b\to\infty$,
and so $(n\,\vec\ek_2)_{n\in\bN}$ is a Cauchy sequence without a limit in
$\bH^2$. In the case $\mu < 1$ we can argue similarly for the Cauchy
sequence $(n^{-1}\,\vec\ek_2)_{n\in\bN}$, as its limit $\vec 0 \not\in\bH^2$. 
The Hopf--Rinow theorem, cf.\ \cite{Jost05},  then implies
that the metric space $\bH^2$ with the metric induced by (\ref{eq:gmu}) 
for $\mu \not=1$ is not geodesically complete.
Of course, in the special case $\mu=0$ we can choose $H=\bR^2$ to obtain the
complete Euclidean space, (\ref{eq:geuclid}). 

Further examples are given by the family of metrics
\begin{equation} \label{eq:galpha}
g(\vec z) = \frac4{(1 - \alpha\, |\vec z|^2)^2}
\quad \text{ and } H = \begin{cases}
\bD_{\alpha} 
= \{ \vec z \in \bR^2 : |\vec z| < \alpha^{-\frac12} \}
& \alpha > 0\,, \\
\bR^2 & \alpha \leq 0\,.
\end{cases}
\end{equation} 
see e.g.\ 
\citet[Definition~4.4]{Schippers07}.
We note that (\ref{eq:galpha}) with $\alpha=1$ gives a model for the
hyperbolic disk, see also \citet[Definition~2.7]{KrausR13}. 
The metric (\ref{eq:galpha}) with $\alpha=-1$, on the other hand, models the 
geometry of the elliptic plane. This is obtained by doing a stereographic 
projection of the sphere onto the plane, see (\ref{eq:gstereo}), below, for
more details.

We note that the sectional curvature of $g$, also called the 
Gaussian curvature of $g$, can be computed by
\begin{equation} \label{eq:Gaussg}
S_0(\vec z) = - \frac{\Delta\,\ln g(\vec z)}{2\,g(\vec z)}
\qquad \vec z \in H\,,
\end{equation}
see e.g.\ \citet[Definition~2.4]{KrausR13}. We observe that for (\ref{eq:gmu}) 
it holds that
\begin{equation} \label{eq:mu_S0}
S_0(\vec z) = -\mu\,(\vec z\,.\,\vec\ek_2)^{2\,(\mu-1)} 
\qquad \vec z \in H\,,
\end{equation}
while for (\ref{eq:galpha}) it holds that
\begin{equation} \label{eq:alpha_S0}
S_0(\vec z) =-\alpha\qquad \vec z \in H\,.
\end{equation}
Of special interest are metrics with constant sectional curvature. For example, 
(\ref{eq:geuclid}) gives $S_0=0$, 
(\ref{eq:ghypbol}), i.e.\ (\ref{eq:gmu}) with $\mu=1$, gives 
$S_0=-1$, while (\ref{eq:galpha}) gives 
$S_0=-\alpha$\,.

{From} now on we consider a family of curves $\Gamma(t)$, parameterized by
$\vec x(\cdot,t) : I \to H \subset \bR^2$. It then holds that
\begin{equation}
\ddt\, L_g(\vec x(t)) = \int_I \left[ 
\nabla\,g^\frac12(\vec x)\,.\,\vec x_t + g^\frac12(\vec x)\,
\frac{(\vec x_t)_\rho\,.\,\vec x_\rho}{|\vec x_\rho|^2}
\right] |\vec x_\rho| \drho\,.
\label{eq:dLdt}
\end{equation}

Let
\begin{equation} \label{eq:sg}
\partial_{s_g} = |\vec x_\rho|_g^{-1} \,\partial_\rho =
g^{-\frac12}(\vec x)\,|\vec x_\rho|^{-1} \,\partial_\rho
= g^{-\frac12}(\vec x)\,\partial_s\,.
\end{equation}
We introduce 
\begin{equation} \label{eq:nug}
\vec\nu_g = g^{-\frac12}(\vec x) \,\vec\nu
= - g^{-\frac12}(\vec x) \,\vec x_s^\perp = - \vec x_{s_g}^\perp
\quad \text{and} \quad
\vec\tau_g = \vec x_{s_g}\,,
\end{equation}
so that $\vec\tau_g\,.\,\vec\nu_g =  0$ and
$|\vec\tau_g|_g^2 = |\vec\nu_g|_g^2 = (\vec\nu_g, \vec\nu_g)_g = g(\vec
x)\,\vec\nu_g\,.\,\vec\nu_g=1$, and let
\begin{equation} \label{eq:Vg}
\mathcal{V}_g = (\vec x_t, \vec\nu_g)_g = g^\frac12(\vec x)\,\vec
x_t\,.\,\vec\nu = g^\frac12(\vec x)\,\mathcal{V}\,.
\end{equation}
It follows from (\ref{eq:varkappa}) that
\begin{align}
\nabla\,g^\frac12(\vec x) & =
[\vec\nu\,(\vec\nu\,.\,\nabla) + \vec\tau\,(\vec\tau\,.\,\nabla)]
\,g^\frac12(\vec x) =
\vec\nu\,(\vec\nu\,.\,\nabla)\,g^\frac12(\vec x)
+ \vec\tau\,\frac1{|\vec x_\rho|}\left[g^\frac12(\vec x)\right]_\rho 
\nonumber \\ &
= \vec\nu\,(\vec\nu\,.\,\nabla)\,g^\frac12(\vec x)
+ \frac1{|\vec x_\rho|} \left[
g^\frac12(\vec x)\,\frac{\vec x_\rho}{|\vec x_\rho|}\right]_\rho
- g^\frac12(\vec x)\,\frac1{|\vec x_\rho|}
 \left[ \frac{\vec x_\rho}{|\vec x_\rho|}\right]_\rho
\nonumber \\ &
= \vec\nu\,(\vec\nu\,.\,\nabla)\,g^\frac12(\vec x)
+ \frac1{|\vec x_\rho|}\left[
g^\frac12(\vec x)\,\frac{\vec x_\rho}{|\vec x_\rho|}\right]_\rho
-g^\frac12(\vec x)\,\varkappa\,\vec\nu\,.
\label{eq:ng}
\end{align}
Combining (\ref{eq:dLdt}), (\ref{eq:ng}) and (\ref{eq:Vg}) yields that
\begin{align}
\ddt\, L_g(\vec x(t)) & = 
\int_I \left(\nabla\,g^\frac12(\vec x)
- \frac1{|\vec x_\rho|}
 \left[ g^\frac12(\vec x)\,\frac{\vec x_\rho}{|\vec x_\rho|}\right]_\rho 
\right).\,\vec x_t \,|\vec x_\rho| 
\drho 
\nonumber \\ & 
= \int_I \left[ 
\vec\nu\,.\,\nabla\,g^\frac12(\vec x)
-g^\frac12(\vec x)\,\varkappa \right] \vec\nu\,. \,\vec x_t \,|\vec x_\rho| 
 \drho \nonumber \\ & 
= \int_I \left[ \vec\nu_g\,.\,\nabla\,g^\frac12(\vec x)
- \varkappa\right] \mathcal{V}_g \,|\vec x_\rho| \drho \nonumber \\ & 
= - \int_I g^{-\frac12}(\vec x) \left[\varkappa - 
\vec\nu_g\,.\,\nabla\,g^\frac12(\vec x) \right] \mathcal{V}_g 
\,|\vec x_\rho|_g \drho \nonumber \\ &
= - \int_I \varkappa_g\,\mathcal{V}_g \,|\vec x_\rho|_g \drho \,,
\label{eq:dLdtV}
\end{align}
where, on recalling (\ref{eq:nug}),
\begin{equation} \label{eq:varkappag}
\varkappa_g = g^{-\frac12}(\vec x) \left[\varkappa - 
\vec\nu_g\,.\,\nabla\,g^\frac12(\vec x) \right] 
= g^{-\frac12}(\vec x)\left[\varkappa - 
\tfrac12\,\vec\nu\,.\,\nabla\,\ln g(\vec x) \right].
\end{equation}
Clearly, the curvature $\varkappa_g$ is the first variation of the length
(\ref{eq:Lg}). 

For the metric (\ref{eq:gmu}) we obtain that
\begin{align}
\varkappa_g = (\vec x\,.\,\vec\ek_2)^\mu
\left[ \varkappa + \mu\,\frac{\vec\nu\,.\,\vec\ek_2}{\vec x\,.\,\vec\ek_2}
\right] ,
\label{eq:mu_varkappag} 
\end{align}
while for (\ref{eq:galpha}) we have
\begin{equation} \label{eq:alpha_varkappag}
\varkappa_g = \tfrac12\,(1 - \alpha\,|\vec x|^2)\left[
\varkappa - 2\,\alpha\,(1 - \alpha\,|\vec x|^2)^{-1}\,\vec x\,.\,\vec\nu
\right].
\end{equation}

In addition, combining (\ref{eq:varkappag}), (\ref{eq:nug}) and (\ref{eq:ng}) 
yields that
\begin{equation} \label{eq:gkgnu}
g(\vec x)\,\varkappa_g\,\vec\nu = 
\frac1{|\vec x_\rho|} \left[ g^\frac12(\vec x)\,
\frac{\vec x_\rho}{|\vec x_\rho|}\right]_\rho - \nabla\,g^\frac12(\vec x)\,.
\end{equation}
Weak formulations of (\ref{eq:varkappa}) and (\ref{eq:gkgnu}) will play an
important role in this paper, and so we state them here for later reference.
The natural weak formulation of (\ref{eq:varkappa}) is
\begin{equation} \label{eq:weak_varkappa}
\int_I \varkappa\,\vec\nu\,.\,\vec\eta\, |\vec x_\rho| \drho
+ \int_I (\vec x_\rho\,.\,\vec\eta_\rho)\,|\vec x_\rho|^{-1} \drho = 0
\quad \forall\ \vec\eta \in [H^1(I)]^2\,,
\end{equation}
while a natural weak formulation of (\ref{eq:gkgnu}) is
\begin{equation} \label{eq:weak_gkgnu}
 \int_I g(\vec x)\,\varkappa_g\,\vec\nu\,.\,
\vec\eta\,|\vec x_\rho|\drho
+ \int_I \left[\nabla\,g^\frac12(\vec x)\,.\,\vec\eta
+ g^\frac12(\vec x)\,\frac{\vec x_\rho\,.\,\vec\eta_\rho}{|\vec x_\rho|^2}
\right] |\vec x_\rho| \drho 
= 0 \quad \forall\ \vec\eta \in [H^1(I)]^2\,.
\end{equation}

\subsection{Curvature flow}

It follows from (\ref{eq:dLdtV}) that
\begin{equation} \label{eq:Vgkg}
\mathcal{V}_g = \varkappa_g
\end{equation}
is the natural $L^2$--gradient flow of $L_g$ with respect to the metric induced
by $g$, i.e.\
\begin{equation} \label{eq:gL2gradflow}
\ddt\, L_g(\vec x(t)) + \int_I \varkappa_g^2 \,|\vec x_\rho|_g \drho = 0\,.
\end{equation}
On recalling (\ref{eq:Vg}) and(\ref{eq:varkappag}), 
we can rewrite (\ref{eq:Vgkg}) equivalently as
\begin{equation} \label{eq:Vk}
g(\vec x)\,\vec x_t\,.\,\vec\nu = 
\varkappa - 
\tfrac12\,\vec\nu\,.\,\nabla\,\ln g(\vec x)\,.
\end{equation}

We consider the following weak formulation of (\ref{eq:Vk}). \\ \noindent
$(\BGNmckappa)$:
Let $\vec x(0) \in [H^1(I)]^2$. For $t \in (0,T]$
find $\vec x(t) \in [H^1(I)]^2$ and $\varkappa(t)\in L^2(I)$ such that
(\ref{eq:weak_varkappa}) holds and
\begin{align}
& \int_I g(\vec x)\,\vec x_t\,.\,\vec\nu\,\chi\,|\vec x_\rho|\drho
= \int_I \left(\varkappa - \tfrac12\,\vec\nu\,.\,\nabla\,\ln g(\vec x) 
\right) \chi\,|\vec x_\rho| \drho 
\quad \forall\ i.e.\ \chi \in L^2(I)\,. \label{eq:xtweak} 
\end{align}

An alternative strong formulation of curvature flow to (\ref{eq:Vk}) 
is given by 
\begin{equation} \label{eq:mcdziuk}
g(\vec x)\,\vec x_t = 
\vec\varkappa - \tfrac12\,[\vec\nu\,.\,\nabla\,\ln g(\vec x)]\,\vec\nu\,,
\end{equation}
where we recall (\ref{eq:varkappa}). We observe that (\ref{eq:mcdziuk}) 
fixes $\vec x_t$ to be totally 
in the normal direction, in contrast to (\ref{eq:Vk}). 
We consider the following weak formulation of (\ref{eq:mcdziuk}).
\\ \noindent
$(\GDmckappa)$:
Let $\vec x(0) \in [H^1(I)]^2$. For $t \in (0,T]$
find $\vec x(t) \in [H^1(I)]^2$ and $\vec\varkappa(t)\in [L^2(I)]^2$ such that
\begin{subequations}
\begin{align}
& \int_I g(\vec x)\,\vec x_t\,.\,\vec\chi\,|\vec x_\rho|\drho
= \int_I \left(\vec\varkappa\,.\,\vec\chi 
- \tfrac12\,[\vec\nu\,.\,\nabla\,\ln g(\vec x)]\,\vec\nu\,.\,\vec\chi\right)
|\vec x_\rho| \drho \quad \forall\ \vec\chi \in [L^2(I)]^2\,,
\label{eq:Dziuka} \\
& \int_I \vec\varkappa\,.\,\vec\eta\, |\vec x_\rho| \drho
+ \int_I (\vec x_\rho\,.\,\vec\eta_\rho)\,|\vec x_\rho|^{-1} \drho = 0
\quad \forall\ \vec\eta \in [H^1(I)]^2\,.
\label{eq:Dziukb}
\end{align}
\end{subequations}

In order to develop stable approximations, we
investigate alternative formulations based on (\ref{eq:weak_gkgnu}). 
Firstly, we note that combining (\ref{eq:Vk}) and (\ref{eq:varkappag}) yields
\begin{equation} \label{eq:Vkg}
g(\vec x)\,\vec x_t\,.\,\vec\nu = 
g^\frac12(\vec x)\,\varkappa_g\,.
\end{equation}
We then consider the following weak formulation of (\ref{eq:Vkg}).
\\ \noindent
$(\BGNmc)$:
Let $\vec x(0) \in [H^1(I)]^2$. For $t \in (0,T]$
find $\vec x(t) \in [H^1(I)]^2$ and $\varkappa_g(t) \in L^2(I)$ such that
(\ref{eq:weak_gkgnu}) holds and
\begin{align}
& \int_I g(\vec x)\,\vec x_t\,.\,\vec\nu\,
\chi\,|\vec x_\rho|\drho
= \int_I g^\frac12(\vec x)\,\varkappa_g\,\chi\,
|\vec x_\rho|\drho \quad \forall\ \chi \in L^2(I)\,. \label{eq:bgnnewa} 
\end{align}
Clearly, choosing $\chi = \varkappa_g$ in (\ref{eq:bgnnewa}) and
$\vec\eta=\vec x_t$ in (\ref{eq:weak_gkgnu}) yields (\ref{eq:gL2gradflow}),
on noting (\ref{eq:Vg}), (\ref{eq:nug}) and (\ref{eq:dLdtV}). 

On recalling (\ref{eq:nug}), we introduce
\begin{equation} \label{eq:veckappag}
\vec\varkappa_g = \varkappa_g\,\vec\nu_g = 
g^{-\frac12}(\vec x)\,\varkappa_g\,\vec\nu\,,
\end{equation}
so that an alternative formulation of curvature flow to (\ref{eq:Vgkg}) is
given by
\begin{equation} \label{eq:vecVgkg}
\vec x_t = \mathcal{V}_g\,\vec\nu_g = \vec\varkappa_g\,,
\end{equation}
where we have recalled (\ref{eq:Vg}) and (\ref{eq:nug}).
Similarly to (\ref{eq:mcdziuk}), the flow (\ref{eq:vecVgkg}) is again totally
in the normal direction.
On recalling (\ref{eq:weak_gkgnu}) and (\ref{eq:veckappag}), 
we consider the following weak formulation of (\ref{eq:vecVgkg}). 
\\ \noindent
$(\GDmc)$:
Let $\vec x(0) \in [H^1(I)]^2$. For $t \in (0,T]$
find $\vec x(t) \in [H^1(I)]^2$ 
and $\vec\varkappa_g(t) \in [L^2(I)]^2$ such that
\begin{subequations}
\begin{align}
& \int_I g(\vec x)\,\vec x_t\,.\,\vec\chi\,|\vec x_\rho|\drho
= \int_I g(\vec x)\,\vec\varkappa_g\,.\,\vec\chi\,
|\vec x_\rho|\drho \quad\forall\ \vec\chi \in [L^2(I)]^2\,,
\label{eq:Dziuknewa} \\ &
\int_I g^\frac32(\vec x)\,\vec\varkappa_g\,.\,\vec\eta\,|\vec x_\rho|\drho
+ \int_I \left[\nabla\,g^\frac12(\vec x)\,.\,\vec\eta
+ g^\frac12(\vec x)\,\frac{\vec x_\rho\,.\,\vec\eta_\rho}{|\vec x_\rho|^2}
\right] |\vec x_\rho| \drho 
= 0 \quad \forall\ \vec\eta \in [H^1(I)]^2\,.
\label{eq:Dziuknewb} 
\end{align}
\end{subequations}
Choosing $\vec\chi = g^\frac12(\vec x)\,\vec\varkappa_g$ 
in (\ref{eq:Dziuknewa}) and
$\vec\eta=\vec x_t$ in (\ref{eq:Dziuknewb}) yields 
\begin{equation} \label{eq:gL2gradflow2}
\ddt\, L_g(\vec x(t)) + \int_I 
g^{\frac32}(\vec x)\,|\vec\varkappa_g|^2\,|\vec x_\rho| \drho = 0\,,
\end{equation}
which is equivalent to (\ref{eq:gL2gradflow}),
on recalling (\ref{eq:dLdtV}), (\ref{eq:veckappag}) and (\ref{eq:normg}). 

We observe that the variable $\varkappa_g$ can be
eliminated from $(\BGNmc)$, by choosing 
$\chi = g^{\frac12}(\vec x)\,\vec\nu\,.\,\vec\eta$ 
in (\ref{eq:bgnnewa}), and then combining
(\ref{eq:bgnnewa}) and (\ref{eq:weak_varkappa}), to yield 
\begin{align} \label{eq:gL2flowbgn}
& \int_I g^\frac32(\vec x)\,(\vec x_t\,.\,\vec\nu)\,(\vec\eta\,.\,\vec\nu)\,
|\vec x_\rho| \drho
+ \int_I \left[\nabla\,g^\frac12(\vec x)\,.\,\vec\eta
+ g^\frac12(\vec x)\,\frac{\vec x_\rho\,.\,\vec\eta_\rho}{|\vec x_\rho|^2}
\right] |\vec x_\rho| \drho 
 = 0 \nonumber \\ & \hspace{11cm}
\quad \forall\ \vec\eta \in [H^1(I)]^2\,.
\end{align}
Similarly, $\vec\varkappa_g$ can be eliminated from $(\GDmc)$
by choosing $\vec\chi = g^\frac12(\vec x)\,\vec\eta$ in (\ref{eq:Dziuknewa}) 
to yield
\begin{equation} \label{eq:gL2flow}
\int_I g^\frac32(\vec x)\,\vec x_t\,.\,\vec\eta\,|\vec x_\rho| \drho
+ \int_I \left[\nabla\,g^\frac12(\vec x)\,.\,\vec\eta
+ g^\frac12(\vec x)\,\frac{\vec x_\rho\,.\,\vec\eta_\rho}{|\vec x_\rho|^2}
\right] |\vec x_\rho| \drho = 0
\quad \forall\ \vec\eta \in [H^1(I)]^2\,.
\end{equation}

\subsection{Curve diffusion}
We consider the flow
\begin{equation} \label{eq:sdg}
\mathcal{V}_g = - (\varkappa_g)_{s_gs_g}
= - g^{-\frac12}(\vec x)\left[ g^{-\frac12}(\vec x)\left[
\varkappa_g \right]_s\right]_s
= - \frac1{g^{\frac12}(\vec x)\,|\vec x_\rho|}
\left[ \frac{[\varkappa_g]_\rho}{g^{\frac12}(\vec x)\,|\vec
x_\rho|}\right]_\rho\,,
\end{equation}
where we have recalled (\ref{eq:sg}). On noting (\ref{eq:dLdtV}), 
and similarly to (\ref{eq:gL2gradflow}), it follows that (\ref{eq:sdg}) 
is the natural $H^{-1}$--gradient flow of $L_g$ with respect to the metric 
induced by $g$, i.e.\
\begin{equation} \label{eq:gH-1gradflow}
\ddt\, L_g(\vec x(t)) + \int_I (\partial_{s_g}\,\varkappa_g)^2 
\,|\vec x_\rho|_g \drho = 0\,.
\end{equation}
Moreover, if $\Gamma(t) = \vec x(I,t)$ encloses a domain
$\Omega(t) \subset \bR^2$, with $\vec\nu \circ \vec x^{-1}$ denoting
the outer normal on $\partial\Omega(t) = \Gamma(t)$, on
recalling (\ref{eq:Ag}), (\ref{eq:normg}) and (\ref{eq:Vg}), it
follows from a transport theorem, see e.g.\
\citet[(2.22)]{DeckelnickDE05}, that
\begin{align}
\ddt\,A_g(\Omega(t)) = \ddt\,\int_{\Omega(t)} g(\vec z) \dz
= \int_I g(\vec x)\,\mathcal{V}\,|\vec x_\rho| \drho
= \int_I \mathcal{V}_g\,|\vec x_\rho|_g \drho\,.
\label{eq:dAgdt}
\end{align}
Hence solutions to (\ref{eq:sdg}) satisfy, on noting (\ref{eq:normg}), that
\begin{align}
\ddt\,A_g(\Omega(t)) = - \int_I (\varkappa_g)_{s_gs_g}\,|\vec x_\rho|_g \drho
= - \int_I \left[ g^{-\frac12}(\vec x)\,[\varkappa_g]_\rho
\right]_\rho \drho = 0\,,
\label{eq:dAgdt0}
\end{align}
and so the total enclosed area is preserved.

Our weak formulations are going be to based on the equivalent equation
\begin{equation} \label{eq:sdg2}
g(\vec x)\,\vec x_t\,.\,\vec\nu = 
- \frac1{|\vec x_\rho|} 
\left(  \frac{[\varkappa_g]_\rho}{g^{\frac12}(\vec x)\,|\vec
x_\rho|}\right)_\rho ,
\end{equation}
recall (\ref{eq:Vg}). 

We consider the following weak formulation of (\ref{eq:sdg2}),
on recalling (\ref{eq:varkappag}). \\ \noindent
$(\BGNsd)$:
Let $\vec x(0) \in [H^1(I)]^2$. For $t \in (0,T]$
find $\vec x(t) \in [H^1(I)]^2$ and $\varkappa(t)\in H^1(I)$ such that
(\ref{eq:weak_varkappa}) holds and
\begin{align}
& \int_I g(\vec x)\,\vec x_t\,.\,\vec\nu\,\chi\,|\vec x_\rho|\drho
= \int_I g^{-\frac12}(\vec x)\,
\left(g^{-\frac12}(\vec x)\left[\varkappa 
- \tfrac12\,\vec\nu\,.\,\nabla\,\ln g(\vec x) \right]
 \right)_\rho \chi_\rho\,|\vec x_\rho|^{-1} \drho 
\nonumber \\ & \hspace{9cm}
\quad \forall\ \chi \in H^1(I)\,. \label{eq:sdweaka} 
\end{align}

We also introduce the following alternative weak formulation 
for (\ref{eq:sdg2}), which treats the curvature $\varkappa_g$ as an unknown.  
\\ \noindent
$(\BGNsdstab)$:
Let $\vec x(0) \in [H^1(I)]^2$. For $t \in (0,T]$
find $\vec x(t) \in [H^1(I)]^2$ and $\varkappa_g(t) \in H^1(I)$ such that
(\ref{eq:weak_gkgnu}) holds and 
\begin{align}
& \int_I g(\vec x)\,(\vec x_t\,.\,\vec\nu)\,
\chi\,|\vec x_\rho|\drho
= \int_I g^{-\frac12}(\vec x)\,[\varkappa_g]_\rho\,\chi_\rho\,
|\vec x_\rho|^{-1} \drho \quad \forall\ \chi \in H^1(I)\,. \label{eq:sdstaba} 
\end{align}
Choosing $\chi = \varkappa_g$ in (\ref{eq:sdstaba}) and
$\vec\eta=\vec x_t$ in (\ref{eq:weak_gkgnu}) yields 
that (\ref{eq:gH-1gradflow}) holds, on noting from (\ref{eq:sg})
that
\begin{equation} \label{eq:Fstab}
(\partial_{s_g}\,\varkappa_g)^2 \,|\vec x_\rho|_g 
= g^{-1}(\vec x)\,|\vec x_\rho|^{-2}\,(\partial_\rho\,\varkappa_g)^2\,
g^{\frac12}(\vec x)\,|\vec x_\rho| = g^{-\frac12}(\vec x)\,
(\partial_\rho\,\varkappa_g)^2\,|\vec x_\rho|^{-1}\,.
\end{equation}

\subsection{Elastic flow}
Here we consider an appropriate $L^2$--gradient flow of the elastic energy
$W_g(\vec x)$, where on recalling (\ref{eq:veckappag}), (\ref{eq:g}),
(\ref{eq:varkappag}) and (\ref{eq:normg}), we set
\begin{align} \label{eq:Wg}
W_g(\vec x) & = \tfrac12\,\int_I |\vec\varkappa_g|_g^2 \,|\vec x_\rho|_g \drho
= \tfrac12\,\int_I \varkappa_g^2 \,|\vec x_\rho|_g \drho \nonumber \\ & 
= \tfrac12\,\int_I g^{-\frac12}(\vec x)
\left(\varkappa - \tfrac12\,\vec\nu\,.\,\nabla\,\ln g(\vec x) \right)^2 
|\vec x_\rho| \drho 
= \tfrac12\,\int_I g^{-\frac12}(\vec x)\,
\widetilde\varkappa_g^2\, |\vec x_\rho| \drho\,.
\end{align}
In the above, on recalling (\ref{eq:varkappag}), we have defined 
\begin{equation} \label{eq:tildekappa}
\widetilde\varkappa_g = g^\frac12(\vec x)\,\varkappa_g 
=\varkappa - \myz\,,\quad \text{with}\quad \myz 
= \vec\nu_g\,.\,\nabla\,g^\frac12(\vec x) 
= \tfrac12\,\vec\nu\,.\,\nabla\,\ln g(\vec x)\,.
\end{equation}
In the following we often omit the dependence of $g$ on $\vec x$, and we
simply write $g$ for $g(\vec x)$ and so on. 
It follows from (\ref{eq:Wg}), (\ref{eq:tildekappa}), (\ref{eq:Vg}) and
(\ref{eq:normg}) that
\begin{align}
& \ddt\,W_g(\vec x(t)) \nonumber \\ & \quad 
= \tfrac12\,\int_I (g^{-\frac12})_t\, \widetilde\varkappa_g^2 \,
|\vec x_\rho| \drho 
+ \int_I g^{-\frac12}\, (\widetilde\varkappa_g)_t\,\widetilde\varkappa_g\,
|\vec x_\rho| \drho
+ \tfrac12\,\int_I g^{-\frac12}\, \widetilde\varkappa_g^2 \,
\vec x_\rho\,.\,(\vec x_t)_\rho \, |\vec x_\rho|^{-1} \drho 
\nonumber \\ & \quad
= \tfrac12\,\int_I (\vec x_t\,.\,\nabla\,g^{-\frac12})\,
 \widetilde\varkappa_g^2 \, |\vec x_\rho| \drho 
+ \int_I g^{-\frac12}\, (\widetilde\varkappa_g)_t\,\widetilde\varkappa_g\,
|\vec x_\rho| \drho
- \tfrac12\,\int_I (g^{-\frac12}\, \widetilde\varkappa_g^2 \,
\vec x_s)_s\,.\,\vec x_t \, |\vec x_\rho| \drho
\nonumber \\ & \quad
= \tfrac12\,\int_I (\vec x_t\,.\,\nabla\,g^{-\frac12})\, 
\widetilde\varkappa_g^2 \, |\vec x_\rho| \drho 
+ \int_I g^{-\frac12}\, (\widetilde\varkappa_g)_t\,\widetilde\varkappa_g\,
|\vec x_\rho| \drho
\nonumber \\ & \qquad\quad
- \tfrac12\,\int_I \left[ 
(\vec x_s\,.\,\nabla\,g^{-\frac12})\,\widetilde\varkappa_g^2 \,\vec x_s
+ 2\,g^{-\frac12}\,\widetilde\varkappa_g\,(\widetilde\varkappa_g)_s\,\vec x_s
+ g^{-\frac12}\,\widetilde\varkappa_g^2 \,\varkappa\,\vec\nu\right]
.\,\vec x_t \, |\vec x_\rho| \drho \,.
\nonumber \\ & \quad
= \tfrac12\,\int_I (\vec\nu\,.\,\nabla\,g^{-\frac12})\, 
\widetilde\varkappa_g^2 \,\mathcal{V}\, |\vec x_\rho| \drho 
+ \int_I g^{-\frac12}\,\widetilde\varkappa_g\,[ 
(\widetilde\varkappa_g)_t - \,(\widetilde\varkappa_g)_s\,\vec x_s\,.\,\vec x_t]
\,|\vec x_\rho| \drho
\nonumber \\ & \qquad\quad
- \tfrac12\,\int_I g^{-\frac12}\,\widetilde\varkappa_g^2 \,\varkappa\,
\mathcal{V}\, |\vec x_\rho| \drho 
\nonumber \\ & \quad
= \tfrac12\,\int_I \left[(\vec\nu\,.\,\nabla\,g^{-\frac12})
- g^{-\frac12}\,\varkappa \right] 
\varkappa_g^2 \,\mathcal{V}_g\, |\vec x_\rho|_g \drho 
+ \int_I \varkappa_g\,[ 
(\widetilde\varkappa_g)_t - \,(\widetilde\varkappa_g)_s\,\vec x_s\,.\,\vec x_t]
\,|\vec x_\rho| \drho
\,.
\label{eq:T123}
\end{align}
We have from (\ref{eq:tildekappa}) that
\begin{equation} \label{eq:nung12}
\varkappa_g - g^{-\frac12}\,\varkappa = 
- \tfrac12\, g^{-\frac12}\,\vec\nu\,.\,\nabla\,\ln g = 
\vec\nu\,.\,\nabla\,g^{-\frac12}\,,
\end{equation}
and so it follows from (\ref{eq:T123}) that
\begin{align}
\ddt\,W_g(\vec x(t)) & = \tfrac12\,\int_I \left[ \varkappa_g 
- 2\,g^{-\frac12}\,\varkappa \right] 
\varkappa_g^2 \,\mathcal{V}_g\, |\vec x_\rho|_g \drho 
+ \int_I \varkappa_g\,[ 
(\widetilde\varkappa_g)_t - \,(\widetilde\varkappa_g)_s\,\vec x_s\,.\,\vec x_t]
\,|\vec x_\rho| \drho \,. 
\label{eq:T45}
\end{align}
In order to deal with the last integral in (\ref{eq:T45}), we observe the
following. It follows from (\ref{eq:tau}), (\ref{eq:varkappa}) 
and (\ref{eq:Vg}) that
\begin{subequations}
\begin{align} 
\vec\nu_s & = - \varkappa\,\vec x_s\,,\quad 
\vec\nu_{ss} = - \varkappa_s\,\vec x_s - \varkappa^2\,\vec\nu\,,
\label{eq:nuss} \\
\vec\nu_t & = - ((\vec x_s)_t\,.\,\vec\nu)\,\vec x_s 
= - ((\vec x_\rho\,|\vec x_\rho|^{-1})_t\,.\,\vec\nu)\,\vec x_s 
= - ((\vec x_t)_s\,.\,\vec\nu)\,\vec x_s\,, \label{eq:nut} \\
\vec\nu_t - (\vec x_s\,.\,\vec x_t)\,\vec\nu_s &=- \mathcal{V}_s\,\vec x_s\,.
\label{eq:nuts}
\end{align}
\end{subequations}
Combining (\ref{eq:nuss},b) yields, on recalling (\ref{eq:Vg}), that
\begin{align}
\varkappa_t &= - (\vec x_s)_t \,.\,\vec\nu_s - \vec x_s\,.\,(\vec\nu_s)_t
= \varkappa\,(\vec x_s)_t \,.\,\vec x_s - \vec x_s\,.\,(\vec\nu_s)_t
= - \vec x_s\,.\,(\vec\nu_s)_t
=  - \vec x_s\,.\,(\vec\nu_\rho\,|\vec x_\rho|^{-1})_t \nonumber \\ & 
=  - \vec x_s\,.\,(\vec\nu_t)_s + (\vec x_s\,.\,\vec\nu_s)\,
\vec x_s\,.\,(\vec x_t)_s
=  - \vec x_s\,.\,(\vec\nu_t)_s - \varkappa\, \vec x_s\,.\,(\vec x_t)_s
=  - \vec x_s\,.\,(\vec\nu_t)_s + \vec\nu_s\,.\,(\vec x_t)_s
\nonumber \\ &
= \vec x_s\,.\left[ ((\vec x_t)_s\,.\,\vec\nu)\,\vec x_s \right]_s
+ \vec\nu_s\,.\,(\vec x_t)_s
= ((\vec x_t)_s\,.\,\vec\nu)_s + \vec\nu_s\,.\,(\vec x_t)_s
\nonumber \\ &
= (\vec x_t\,.\,\vec\nu)_{ss} - (\vec x_t\,.\,\vec\nu_s)_s
+ \vec\nu_s\,.\,(\vec x_t)_s
= (\vec x_t\,.\,\vec\nu)_{ss} - \vec x_t\,.\,\vec\nu_{ss}\
= (\vec x_t\,.\,\vec\nu)_{ss} 
+ \vec x_t\,.\,[\varkappa_s\,\vec x_s + \varkappa^2\,\vec\nu]
\nonumber \\ &
= \mathcal{V}_{ss} + \varkappa^2\,\mathcal{V} 
+ \varkappa_s\,\vec x_s\,.\,\vec x_t\,,
\label{eq:kappat}
\end{align}
compare also with \citet[(A.3)]{nsns2phase}.
It follows from (\ref{eq:tildekappa}), (\ref{eq:kappat}) 
and (\ref{eq:nuts}) that
\begin{align} \label{eq:tildekappat}
& (\widetilde\varkappa_g)_t - \,(\widetilde\varkappa_g)_s\,\vec x_s\,.\,\vec x_t
= \varkappa_t - \varkappa_s\,\vec x_s\,.\,\vec x_t
- (\myz_t - \myz_s\,\vec x_s\,.\,\vec x_t)
= \mathcal{V}_{ss} + \varkappa^2\,\mathcal{V} 
- (\myz_t - \myz_s\,\vec x_s\,.\,\vec x_t) \nonumber \\ & \quad
= \mathcal{V}_{ss} + \varkappa^2\,\mathcal{V} 
- \tfrac12\,(\vec\nu_t - (\vec x_s\,.\,\vec x_t)\,\vec\nu_s)\,.\,\nabla\,\ln g
- \tfrac12\,( (\nabla\,\ln g)_t - (\vec x_s\,.\,\vec x_t)\,(\nabla\,\ln g)_s)
\,.\,\vec \nu \nonumber \\ & \quad
= \mathcal{V}_{ss} + \varkappa^2\,\mathcal{V} 
+ \tfrac12\, \mathcal{V}_s\,\vec x_s\,.\,\nabla\,\ln g
- \tfrac12\,( (\nabla\,\ln g)_t - (\vec x_s\,.\,\vec x_t)\,(\nabla\,\ln g)_s)
\,.\,\vec \nu \,.
\end{align}
Combining (\ref{eq:T45}) and (\ref{eq:tildekappat}) yields,
on noting (\ref{eq:Vg}), (\ref{eq:normg}), (\ref{eq:varkappa}),
(\ref{eq:tildekappa}) and (\ref{eq:sg}), that
\begin{align}
& \ddt\,W_g(\vec x(t)) = \tfrac12\,\int_I \left[ \varkappa_g 
- 2\,g^{-\frac12}\,\varkappa \right] 
\varkappa_g^2 \,\mathcal{V}_g\, |\vec x_\rho|_g \drho 
+ \int_I \varkappa_g\,[ 
\mathcal{V}_{ss} + \varkappa^2\,\mathcal{V} ]\,|\vec x_\rho| \drho
\nonumber \\ & \hspace{3cm}
+ \tfrac12\,\int_I \varkappa_g\,[
\mathcal{V}_s\,\vec x_s\,.\,\nabla\,\ln g
- ( (\nabla\,\ln g)_t - (\vec x_s\,.\,\vec x_t)\,(\nabla\,\ln g)_s)
\,.\,\vec \nu ]\,|\vec x_\rho| \drho \nonumber \\
& = \tfrac12\,\int_I \left[ \varkappa_g 
- 2\,g^{-\frac12}\,\varkappa \right] 
\varkappa_g^2 \,\mathcal{V}_g\, |\vec x_\rho|_g \drho 
+ \int_I [(\varkappa_g)_{ss} + \varkappa^2\,\varkappa_g]\,\mathcal{V} 
\,|\vec x_\rho| \drho \nonumber \\ & \quad
- \tfrac12\,\int_I [(\varkappa_g)_s\,\vec x_s\,.\,\nabla\,\ln g 
+ \varkappa_g\,\vec x_{ss}\,.\,\nabla\,\ln g 
+ \varkappa_g\,\vec x_s\,.\,(\nabla\,\ln g)_s
]\,\mathcal{V}\,|\vec x_\rho| \drho
\nonumber \\ & \quad
- \tfrac12\,\int_I \varkappa_g\,[ 
( (\nabla\,\ln g)_t - (\vec x_s\,.\,\vec x_t)\,(\nabla\,\ln g)_s)
\,.\,\vec \nu ]\,|\vec x_\rho| \drho \nonumber \\
& = \tfrac12\,\int_I \left[ \varkappa_g 
- 2\,g^{-\frac12}\,\varkappa \right] 
\varkappa_g^2 \,\mathcal{V}_g\, |\vec x_\rho|_g \drho 
+ \int_I g^{-1}\,[(\varkappa_g)_{ss} + \varkappa^2\,\varkappa_g]\,\mathcal{V}_g 
\,|\vec x_\rho|_g \drho \nonumber \\ & \quad
- \tfrac12\,\int_I [(\varkappa_g)_s\,(\ln g)_s
+ 2\,\varkappa_g\,\varkappa\,\myz 
+ \varkappa_g\,\vec x_s\,.\,(\nabla\,\ln g)_s
]\,\mathcal{V}\,|\vec x_\rho| \drho
\nonumber \\ & \quad
- \tfrac12\,\int_I \varkappa_g\,[ 
( (\nabla\,\ln g)_t - (\vec x_s\,.\,\vec x_t)\,(\nabla\,\ln g)_s)
\,.\,\vec \nu ]\,|\vec x_\rho| \drho \nonumber \\
& =\int_I \left[  \tfrac12\,\varkappa_g^3
- g^{-\frac12}\,\varkappa\,\varkappa_g^2 + (\varkappa_g)_{s_gs_g} 
- (g^{-\frac12})_s\,(\varkappa_g)_{s_g}
+ g^{-1}\,\varkappa^2\,\varkappa_g 
\right]\mathcal{V}_g 
\,|\vec x_\rho|_g \drho \nonumber \\ & \quad
+ \int_I [(\varkappa_g)_{s_g}\,(g^{-\frac12})_s
- g^{-1}\,\varkappa_g\,\varkappa\,\myz ]\,\mathcal{V}_g\,|\vec x_\rho|_g \drho
\nonumber \\ & \quad
- \tfrac12\,\int_I \varkappa_g\,[ 
( (\nabla\,\ln g)_t - (\vec x_s\,.\,\vec x_t)\,(\nabla\,\ln g)_s)
\,.\,\vec \nu + \vec x_s\,.\,(\nabla\,\ln g)_s\,\mathcal{V}]
\,|\vec x_\rho| \drho \nonumber \\
& =\int_I \left[ (\varkappa_g)_{s_gs_g} + \tfrac12\,\varkappa_g^3
- g^{-1}\,\varkappa\,\varkappa_g \left( g^{\frac12}\,\varkappa_g
- \varkappa + \myz \right)
\right]\mathcal{V}_g 
\,|\vec x_\rho|_g \drho \nonumber \\ & \quad
- \tfrac12\,\int_I \varkappa_g\,[ 
( (\nabla\,\ln g)_t - (\vec x_s\,.\,\vec x_t)\,(\nabla\,\ln g)_s)
\,.\,\vec \nu + \vec x_s\,.\,(\nabla\,\ln g)_s\,\mathcal{V}]
\,|\vec x_\rho| \drho \nonumber \\
& =\int_I \left[ (\varkappa_g)_{s_gs_g} + \tfrac12\,\varkappa_g^3
\right]\mathcal{V}_g \,|\vec x_\rho|_g \drho \nonumber \\ & \quad
- \tfrac12\,\int_I \varkappa_g\,[ 
( (\nabla\,\ln g)_t - (\vec x_s\,.\,\vec x_t)\,(\nabla\,\ln g)_s)
\,.\,\vec \nu + \vec x_s\,.\,(\nabla\,\ln g)_s\,\mathcal{V}]
\,|\vec x_\rho| \drho\,. 
\label{eq:T67}
\end{align}
It remains to deal with the final integral in (\ref{eq:T67}). To this end,
we note that
\begin{align}
& ( (\nabla\,\ln g)_t - (\vec x_s\,.\,\vec x_t)\,(\nabla\,\ln g)_s)
\,.\,\vec \nu + \vec x_s\,.\,(\nabla\,\ln g)_s\,\mathcal{V}
\nonumber \\ & \quad
= \vec\nu\,.\,(D^2\,\ln g)\,\vec x_t - (\vec x_s\,.\,\vec x_t)\,\vec\nu\,.\,
(D^2\,\ln g)\,\vec x_s + \mathcal{V}\,\vec x_s\,.\,(D^2\,\ln g)\,\vec x_s
\nonumber \\ & \quad
= \mathcal{V}\,\vec\nu\,.\,(D^2\,\ln g)\,\vec \nu
+ \mathcal{V}\,\vec x_s\,.\,(D^2\,\ln g)\,\vec x_s
= \mathcal{V}\,\Delta\,\ln g\,.
\label{eq:S0a}
\end{align}
Combining (\ref{eq:T67}) and (\ref{eq:S0a}) yields, on noting
(\ref{eq:Vg}), (\ref{eq:normg}) and (\ref{eq:Gaussg}), that
\begin{align}
\ddt\,W_g(\vec x(t)) & = 
\int_I \left[ (\varkappa_g)_{s_gs_g} + \tfrac12\,\varkappa_g^3
\right]\mathcal{V}_g \,|\vec x_\rho|_g \drho 
- \tfrac12\,\int_I \varkappa_g\,(\Delta\,\ln g)\,\mathcal{V}\,
|\vec x_\rho| \drho \nonumber \\ & 
= \int_I \left[ (\varkappa_g)_{s_gs_g} + \tfrac12\,\varkappa_g^3
+ S_0(\vec x)\,\varkappa_g
\right]\mathcal{V}_g \,|\vec x_\rho|_g \drho \,. 
\label{eq:newT67}
\end{align}
It follows from (\ref{eq:newT67}) that elastic flow is given by
\begin{equation} \label{eq:g_elastflow}
\mathcal{V}_g = - (\varkappa_g)_{s_gs_g} - \tfrac12\,\varkappa_g^3 
- S_0(\vec x)\,\varkappa_g\,.
\end{equation}

\begin{rem} \label{rem:geuclid}
We note that in the special case (\ref{eq:geuclid}),
it follows from (\ref{eq:varkappag}) and (\ref{eq:Gaussg}) 
that $\varkappa_g = \varkappa$ and $S_0=0$, 
and so (\ref{eq:g_elastflow}) collapses to
\begin{equation} \label{eq:euclid_elastflow}
\mathcal{V} = - \varkappa_{ss} - \tfrac12\,\varkappa^3\,,
\end{equation}
i.e.\ to elastic flow in the Euclidean plane,
compare e.g.\ \citet[(1.8)]{willmore}. 
\end{rem}

\begin{rem} \label{rem:DAS17}
In the special case (\ref{eq:ghypbol}), i.e.\ (\ref{eq:gmu}) with $\mu=1$, it
follows from (\ref{eq:mu_S0}) that sectional curvature $S_0=-1$ is constant,
and so (\ref{eq:g_elastflow}) collapses to
\begin{equation} \label{eq:hb_elastflow}
\mathcal{V}_g = - (\varkappa_g)_{s_gs_g} - \tfrac12\,\varkappa_g^3 
+ \varkappa_g\,,
\end{equation}
which is also called hyperbolic elastic flow.
In order to show that (\ref{eq:hb_elastflow}) is equivalent to
(5) in \cite{DallAcquaS17preprint},
for the length parameter $\lambda = 0$, i.e.\ to
\begin{equation} \label{eq:DAS17}
\vec x_t = - (\nabsg^\perp)^2\,\vec\varkappa_g 
- \tfrac12\,|\vec\varkappa_g|_g^2\,\vec\varkappa_g + \vec\varkappa_g
= - (\nabsg^\perp)^2\,\vec\varkappa_g 
- \tfrac12\,\varkappa_g^3\,\vec\nu_g + \varkappa_g\,\vec\nu_g\,,
\end{equation}
we make the following observations. 
It follows from  (\ref{eq:veckappag}), (\ref{eq:mu_varkappag}) for $\mu=1$, 
(\ref{eq:sg}), (\ref{eq:nug}) and $\vec\ek_2^\perp = \vec\ek_1$ that
\begin{align}
\vec\varkappa_g & 
= (\vec x\,.\,\vec\ek_2)^2\left[\varkappa + 
\frac{\vec\nu\,.\,\vec\ek_2}{\vec x\,.\,\vec\ek_2}\right]\vec\nu
= \vec x\,.\,\vec\ek_2 \left[ (\vec x\,.\,\vec\ek_2)\,\vec x_s \right]_s
- (\vec x\,.\,\vec\ek_2)\,(\vec x_s\,.\,\vec\ek_2)\,\vec x_s
+ (\vec x\,.\,\vec\ek_2)\,(\vec\nu\,.\,\vec\ek_2)\,\vec\nu \nonumber \\ &
= \vec x_{s_gs_g} - (\vec x\,.\,\vec\ek_2)\left[
(\vec x_s\,.\,\vec\ek_2)\,\vec x_s
- (\vec\nu\,.\,\vec\ek_2)\,\vec\nu \right] 
= \vec x_{s_gs_g} - (\vec x\,.\,\vec\ek_2)^{-1}\left[
(\vec x_{s_g}\,.\,\vec\ek_2)\,\vec x_{s_g}
- (\vec\nu_g\,.\,\vec\ek_2)\,\vec\nu_g \right] \nonumber \\ &
= \vec x_{s_gs_g} - (\vec x\,.\,\vec\ek_2)^{-1}\left[
(\vec x_{s_g}\,.\,\vec\ek_2)\,\vec x_{s_g}
- (\vec x_{s_g}^\perp\,.\,\vec\ek_2)\,\vec x_{s_g}^\perp \right] \nonumber \\ &
= \vec x_{s_gs_g} - (\vec x\,.\,\vec\ek_2)^{-1}\left[
(\vec x_{s_g}\,.\,\vec\ek_2)\,\vec x_{s_g}
+ (\vec x_{s_g}\,.\,\vec\ek_1)\,\vec x_{s_g}^\perp \right] \nonumber \\ &
= \vec x_{s_gs_g} + (\vec x\,.\,\vec\ek_2)^{-1}\left[
- 2\,(\vec x_{s_g}\,.\,\vec\ek_1)\,(\vec x_{s_g}\,.\,\vec\ek_2)\,\vec\ek_1
+ \left( (\vec x_{s_g}\,.\,\vec\ek_1)^2 - (\vec x_{s_g}\,.\,\vec\ek_2)^2 
\right) \vec\ek_2 \right] ,
\label{eq:hb_veckappag2}
\end{align}
which agrees with \citet[(12)]{DallAcquaS17preprint}. Alternatively, one can
also write (\ref{eq:hb_veckappag2}), on noting the last equation on its 
second line, as 
\begin{equation} \label{eq:dagger}
\vec\varkappa_g = \nabsg\,\vec x_{s_g}\,,
\end{equation}
where the covariant derivative is defined by
\begin{equation} \label{eq:covariantderiv}
\nabsg\,\vec f = \vec f_{s_g} + 
(\vec x\,.\,\vec\ek_2)^{-1}\left[
(\vec f\,.\,\vec\ek_1)\,\vec\nu_g - (\vec f\,.\,\vec\ek_2)\,\vec x_{s_g}
 \right] ,
\end{equation}
on recalling 
(\ref{eq:nug}) and that $\vec\ek_1^\perp = -\vec\ek_2$.
We remark that (\ref{eq:dagger}) agrees with the expression under (1) in
\cite{DallAcquaS17preprint}, on noting the expression for $\nabsg$ on
the top of page 5 in \cite{DallAcquaS17preprint}.
In addition, we define
\begin{equation} \label{eq:covperp}
\nabsg^\perp\,\vec f = \nabsg\,\vec f - (\nabsg\,\vec f, \vec x_{s_g})_g\,
\vec x_{s_g} = (\nabsg\,\vec f , \vec\nu_g)_g\,\vec\nu_g\,,
\end{equation}
see \citet[(13)]{DallAcquaS17preprint}. It follows from
(\ref{eq:covperp}) and (\ref{eq:covariantderiv}), on recalling (\ref{eq:nug}), 
that
\begin{equation} \label{eq:hb_covperp}
\nabsg^\perp\,\vec f = \left[(\vec f_{s_g}, \vec\nu_g)_g
+ (\vec x\,.\,\vec\ek_2)^{-1}\,(\vec f\,.\,\vec\ek_1)\right]\vec\nu_g\,.
\end{equation}
We now compute $\nabsg^\perp\,\vec\varkappa_g$. 
On recalling (\ref{eq:nug}) and (\ref{eq:veckappag}), 
we have that $(\vec\varkappa_g)_{s_g} = (\varkappa_g\,\vec\nu_g)_{s_g} = 
(\vec x\,.\,\vec\ek_2\,\varkappa_g\,\vec\nu)_{s_g}$, and so, on recalling
(\ref{eq:sg}), we have that
\begin{align}
((\vec\varkappa_g)_{s_g}, \vec\nu_g)_g
& = (\vec x\,.\,\vec\ek_2)^{-1}
\left[ (\vec x\,.\,\vec\ek_2)\,\varkappa_g\,\vec\nu\right]_{s_g}\vec\nu
= (\vec x\,.\,\vec\ek_2)^{-1} 
\left[ (\vec x\,.\,\vec\ek_2)\,\varkappa_g\right]_{s_g}
= (\varkappa_g)_{s_g} + \vec x_s\,.\,\vec\ek_2\,\varkappa_g\,.
\label{eq:hb_kgsgnugg}
\end{align}
Hence it follows from (\ref{eq:hb_covperp}), (\ref{eq:hb_kgsgnugg}),
(\ref{eq:nug}), (\ref{eq:veckappag}) and (\ref{eq:tau}) that
\begin{align}
\nabsg^\perp\,\vec\varkappa_g & = 
\left[ (\varkappa_g)_{s_g} + \left[ \vec x_s\,.\,\vec\ek_2
+ (\vec x\,.\,\vec\ek_2)^{-1}\,\vec\nu_g\,.\,\vec\ek_1 \right]
\varkappa_g \right]\vec\nu_g
= \left[ (\varkappa_g)_{s_g} + \left[ \vec x_s\,.\,\vec\ek_2
+ \vec\nu\,.\,\vec\ek_1 \right] \varkappa_g \right]\vec\nu_g
\nonumber \\ & 
= (\varkappa_g)_{s_g}\,\vec\nu_g\,.
\label{eq:hb_covperpkg}
\end{align}
Therefore (\ref{eq:veckappag}) and (\ref{eq:hb_covperpkg}) yield that
\begin{equation} \label{eq:covperp2kg}
(\nabsg^\perp)^2\,\vec\varkappa_g = 
\nabsg^\perp\,[\nabsg^\perp\,(\varkappa_g\,\vec\nu_g)] =
\nabsg^\perp\,[(\varkappa_g)_{s_g}\,\vec\nu_g] =
(\varkappa_g)_{s_gs_g}\,\vec\nu_g\,.
\end{equation}
On combining (\ref{eq:covperp2kg}) and (\ref{eq:DAS17}), we have that
\begin{equation} \label{eq:DAS17b}
\vec x_t = \left [ - (\varkappa_g)_{s_gs_g}
- \tfrac12\,\varkappa_g^3 + \varkappa_g\right] \vec\nu_g\,,
\end{equation}
which agrees with (\ref{eq:hb_elastflow}) in the normal direction on noting
(\ref{eq:Vg}).
\end{rem}

Our weak formulations of (\ref{eq:g_elastflow}) 
are going be to based on the equivalent equation
\begin{equation} \label{eq:g_elastflow2}
g(\vec x)\,\vec x_t\,.\,\vec\nu = 
- \frac1{|\vec x_\rho|} 
\left(  \frac{[\varkappa_g]_\rho}{g^{\frac12}(\vec x)\,|\vec
x_\rho|}\right)_\rho 
- \tfrac12\,g^\frac12(\vec x)\,\varkappa_g^3 
- g^\frac12(\vec x)\,S_0(\vec x)\,\varkappa_g\,,
\end{equation}
where we have recalled (\ref{eq:Vg}) and (\ref{eq:sg}). Note the similarity
between (\ref{eq:g_elastflow2}) and (\ref{eq:sdg2}). 
On recalling (\ref{eq:varkappag}),
we consider the following weak formulation of (\ref{eq:g_elastflow2}),
in the spirit of $(\BGNsd)$ for (\ref{eq:sdg2}). \\ \noindent
$(\BGNwf)$:
Let $\vec x(0) \in [H^1(I)]^2$. For $t \in (0,T]$
find $\vec x(t) \in [H^1(I)]^2$ and $\varkappa(t)\in H^1(I)$ such that
(\ref{eq:weak_varkappa}) holds and
\begin{align}
& \int_I g(\vec x)\,\vec x_t\,.\,\vec\nu\,\chi\,|\vec x_\rho|\drho
= \int_I g^{-\frac12}(\vec x)\,
\left( g^{-\frac12}(\vec x)\left[\varkappa - 
\tfrac12\,\vec\nu\,.\,\nabla\,\ln g(\vec x)
 \right] \right)_\rho \chi_\rho\,|\vec x_\rho|^{-1} \drho 
\nonumber \\ & \qquad
- \tfrac12\, \int_I g^{-1}(\vec x) \left[
\varkappa -\tfrac12\,\vec\nu\,.\,\nabla\,\ln g(\vec x)
  \right]^3 \chi\,|\vec x_\rho| \drho 
\nonumber \\ & \qquad
- \int_I S_0(\vec x) 
\left[ \varkappa -\tfrac12\,\vec\nu\,.\,\nabla\,\ln g(\vec x)
\right] \chi\,|\vec x_\rho| \drho 
\quad \forall\ \chi \in H^1(I)\,. \label{eq:sdwfa} 
\end{align}
We also introduce the following alternative weak formulation for 
(\ref{eq:g_elastflow2}), which treats the curvature $\varkappa_g$ as an 
unknown, in the spirit of $(\BGNsdstab)$ for (\ref{eq:sdg2}). \\ \noindent
$(\BGNwfwf)$:
Let $\vec x(0) \in [H^1(I)]^2$. For $t \in (0,T]$
find $\vec x(t) \in [H^1(I)]^2$ and $\varkappa_g(t) \in H^1(I)$ such that
(\ref{eq:weak_gkgnu}) holds and
\begin{align}
& \int_I g(\vec x)\,\vec x_t\,.\,\vec\nu\,
\chi\,|\vec x_\rho|\drho
= \int_I g^{-\frac12}(\vec x)\,[\varkappa_g]_\rho\,\chi_\rho\,
|\vec x_\rho|^{-1} \drho 
- \tfrac12\, \int_I g^\frac12(\vec x)\,
\varkappa_g^3 \,\chi\,|\vec x_\rho| \drho 
\nonumber \\ & \hspace{5cm}
- \int_I S_0(\vec x)\,g^{\frac12}(\vec x)\,\varkappa_g
\,\chi\,|\vec x_\rho| \drho \quad \forall\ \chi \in H^1(I)\,.
\label{eq:sdwfwfa} 
\end{align}

For the numerical approximations based on $(\BGNwf)$ and $(\BGNwfwf)$ it does
not appear possible to prove stability results that show that 
discrete analogues of (\ref{eq:Wg}) decrease monotonically in time. Based on
the techniques in \cite{pwf}, it is possible to 
introduce alternative weak formulations, for which
semidiscrete continuous-in-time approximations admit such a stability result. 
We will present and analyse these alternative discretizations in the
forthcoming article \cite{hypbolpwf}. 

\subsection{Geodesic curve evolutions on surfaces via conformal maps}
\label{sec:conformal}
Let $\vec\Phi : H \to \bR^d$, $d\geq3$, 
be a conformal parameterization of the embedded 
two-dimensional Riemannian manifold $\mathcal{M} \subset \bR^d$, i.e.\ 
$\mathcal{M} = \vec\Phi(H)$ 
and $|\partial_{\vec\ek_1} \vec\Phi(\vec z)|^2 = |\partial_{\vec\ek_2} 
\vec\Phi(\vec z)|^2$ and
$\partial_{\vec\ek_1} \vec\Phi(\vec z) \,.\, 
\partial_{\vec\ek_2} \vec\Phi(\vec z) = 0$ for all $\vec z \in H$.
While such a parameterization in general does not exist, we recall
from \citet[\S5.10]{Taylor11I} that any orientable two-dimensional
Riemannian manifold can be covered with finitely many conformally 
parameterized patches.
Below we give some examples for $\mathcal{M} \subset \bR^3$, where such a
conformal parameterization exists.
Then the corresponding metric tensor is given by 
$g_{ij} = \partial_{\vec\ek_1} \vec\Phi \,.\, \partial_{\vec\ek_2} \vec\Phi
= g\,\delta_{ij}$ for the metric
\begin{equation} \label{eq:gconformal}
g(\vec z) = |\partial_{\vec\ek_1} \vec\Phi(\vec z)|^2
= |\partial_{\vec\ek_2} \vec\Phi(\vec z)|^2 \qquad \vec z \in H\,.
\end{equation}
We recall from \citet[4.26 in \S4E]{Kuhnel15} 
that for (\ref{eq:gconformal}) it holds that
\begin{equation} \label{eq:S0Gauss}
S_0(\vec z) = - \frac{\Delta\,\ln g(\vec z)}{2\,g(\vec z)}
= \mathcal{K}(\vec\Phi(\vec z))\qquad \vec z \in H\,,
\end{equation}
where $\mathcal{K}$ denotes the Gaussian curvature of $\mathcal{M}$.

An example for (\ref{eq:gconformal}) 
is the stereographic projection of the unit sphere, without the 
north pole, onto the plane, where
\begin{subequations}
\begin{align} \label{eq:gstereo}
\vec\Phi(\vec z) & = (1 + |\vec z|^2)^{-1}\,
(2\,\vec z\,.\,\vec\ek_1, 2\,\vec z\,.\,\vec\ek_2, 
|\vec z|^2 - 1)^T\,,\quad \nonumber \\ 
g(\vec z) & = 4\,(1 + |\vec z|^2)^{-2} \quad\text{and}\quad
H = \bR^2\,;
\end{align}
which yields a geometric interpretation to (\ref{eq:galpha}) with $\alpha=-1$. 
Further examples are the Mercator projection of the unit sphere, without
the north and the south pole, where
\begin{align} \label{eq:gMercator}
\vec\Phi(\vec z) & = \cosh^{-1}(\vec z\,.\,\vec\ek_1)\,
(\cos (\vec z\,.\,\vec\ek_2), \sin (\vec z\,.\,\vec\ek_2), 
\sinh (\vec z\,.\,\vec\ek_1))^T\,,\quad \nonumber \\ 
g(\vec z) & = \cosh^{-2}(\vec z\,.\,\vec\ek_1) \quad\text{and}\quad
H = \bR^2\,;
\end{align}
as well as the catenoid parameterization
\begin{align} \label{eq:gcatenoid}
\vec\Phi(\vec z) & = 
(\cosh (\vec z\,.\,\vec\ek_1)\,\cos (\vec z\,.\,\vec\ek_2), 
\cosh (\vec z\,.\,\vec\ek_1)\,\sin (\vec z\,.\,\vec\ek_2), 
\vec z\,.\,\vec\ek_1)^T\,,\quad \nonumber \\ 
g(\vec z) & = \cosh^2(\vec z\,.\,\vec\ek_1) \quad\text{and}\quad
H = \bR^2\,.
\end{align}
Based on \citet[p.\,593]{Sullivan11} we introduce the following conformal 
parameterization of a torus with large radius $R > 1$ and small radius $r=1$.
In particular, we let $\mathfrak s = [R^2 - 1]^\frac12$ and define
\begin{align} \label{eq:gtorus}
\vec\Phi(\vec z) & = 
\mathfrak s\,
([\mathfrak s^2 + 1]^\frac12- \cos (\vec z\,.\,\vec\ek_2))^{-1}\,
(\mathfrak s\, \cos \tfrac{\vec z\,.\,\vec\ek_1}{\mathfrak s},
\mathfrak s\, \sin \tfrac{\vec z\,.\,\vec\ek_1}{\mathfrak s},
\sin (\vec z\,.\,\vec\ek_2))^T\,,\quad \nonumber \\ 
g(\vec z) & = \mathfrak s^2\,
([\mathfrak s^2 + 1]^\frac12 - \cos (\vec z\,.\,\vec\ek_2))^{-2} 
\quad\text{and}\quad H = \bR^2\,. 
\end{align}
\end{subequations}
We observe that the parameterizations given in 
\mbox{(\ref{eq:gMercator}--d)} are not
bijective, since $\vec\Phi(H)$ covers the surface $\mathcal{M}$  
infinitely many times.

It can be shown that geodesic curvature flow, geodesic curve diffusion and 
geodesic elastic flow on $\mathcal{M} = \vec\Phi(H)$ reduce to 
(\ref{eq:Vgkg}), (\ref{eq:sdg}) and (\ref{eq:g_elastflow})
for the metric $g$ in $H$, respectively. See Appendix~\ref{sec:C}
for details.
Hence the numerical schemes introduced in this paper yield novel
numerical approximations for these geodesic evolution equations. As all the
computations take place in $H$, the discrete curve that approximates
$\vec\Phi(\vec x(I))$ will always lie on $\mathcal{M}$. This is similar to the
approach in \cite{MikulaS06}, where a (local) graph formulation for
$\mathcal{M}$ is employed. But it is fundamentally different from the direct 
approach considered in \cite{curves3d}, where $\vec\Phi(\vec x(I)) \subset
\bR^3$ is discretized. An advantage of the approach in this paper is
that one always stays on $\mathcal{M}$, whereas in the approach of
\cite{curves3d} the curve can leave $\mathcal{M}$ by a small error.
A disadvantage of the strategy in this paper, compared
to \cite{curves3d}, is that if $\overline{\mathcal{M}} \setminus \vec\Phi(H)$ 
is nonempty,
then curves going through these singular points cannot be considered, and
curves coming close to these singular points pose numerical challenges. For
example, the north pole of the unit sphere, i.e.\ $\vec\ek_3 \in \bR^3$, 
is such a singular point for (\ref{eq:gstereo}), while both the north and the
south pole, i.e.\ $\pm\vec\ek_3 \in \bR^3$, are such singular points for
(\ref{eq:gMercator}).
We also note that in the examples (\ref{eq:gcatenoid},d), any closed curve 
$\vec x(I)$ in $H$ will correspond to a curve $\vec\Phi(\vec x(I))$ on the
surface $\mathcal{M}$ that is homotopic to a point. 
In order to model other curves, the domain $H$ needs to be embedded in an
algebraic structure different to $\bR^2$. In particular, 
$H = \bR\times \RpiZ$ for (\ref{eq:gcatenoid}) and
$H = \RpisZ \times \RpiZ$ for (\ref{eq:gtorus}), respectively. 

\subsection{Geometric evolution equations for axisymmetric hypersurfaces}

We recall that the metric (\ref{eq:ge1}) is of relevance when considering
geometric evolution equations for axisymmetric hypersurfaces in $\bR^3$.
However, the natural gradient flows considered in that setting differ from the
flows considered in this paper. Let us briefly recall some geometric evolution
equations for closed hypersurfaces $\mathcal{S}(t)$ in $\bR^d$, $d\geq3$.
We refer to the review article \cite{DeckelnickDE05} for more details.
The mean curvature flow for $\mathcal{S}(t)$,
i.e.\ the $L^2\!\mid_{\mathcal{S}}$--gradient flow of surface area,
is given by the evolution law
\begin{equation} \label{eq:mcfS}
\mathcal{V}_{\mathcal{S}} = k_m\qquad\text{ on } \mathcal{S}(t)\,,
\end{equation}
where $\mathcal{V}_{\mathcal{S}}$ denotes the normal velocity of 
$\mathcal{S}(t)$ in the direction of the normal $\vec\normal_{\mathcal{S}}$.
Moreover, $k_m$ is the mean curvature of $\mathcal{S}(t)$, i.e.\ the sum of the
principal curvatures of $\mathcal{S}(t)$. 
The surface diffusion flow for $\mathcal{S}(t)$ is given by the evolution law
\begin{equation} \label{eq:sdS}
\mathcal{V}_{\mathcal{S}} =- \Delta_{\mathcal{S}}\,k_m \qquad\text{on }
\mathcal{S}(t)\,,
\end{equation}
where $\Delta_{\mathcal{S}}$ is the Laplace--Beltrami operator on
$\mathcal{S}(t)$.

For an axisymmetric hypersurface that is generated from the curve 
$\Gamma(t) = \vec x(t)$ by rotation around the $x_1$--axis, 
the total surface area is given by (\ref{eq:A}). Moreover, 
the mean curvature flow (\ref{eq:mcfS}) can be written in terms of the metric
(\ref{eq:ge1}) as
\begin{equation} \label{eq:aximcf}
\mathcal{V} = \varkappa - \frac{\vec\nu\,.\,\vec\ek_2}{\vec x\,.\,\vec\ek_2}
= g^\frac12(\vec x)\,\varkappa_g
\qquad \iff \qquad
\mathcal{V}_g = g(\vec x)\,\varkappa_g \,,
\end{equation}
see \cite{aximcf}, where we have noted (\ref{eq:varkappag}),
(\ref{eq:nug}) and (\ref{eq:Vg}). Hence (\ref{eq:aximcf}) differs from the
curvature flow (\ref{eq:Vgkg}) for (\ref{eq:ge1}) by a space-dependent
weighting factor.
We note that, in contrast to (\ref{eq:Vgkg}), the flow
(\ref{eq:aximcf}) is invariant under constant rescalings of $g$, e.g.\ both
(\ref{eq:ge1}) and (\ref{eq:gmu}) with $\mu=-1$ lead to the same flow
(\ref{eq:aximcf}).

Moreover, surface diffusion, (\ref{eq:sdS}), 
for axisymmetric hypersurfaces can be written,
in terms of the metric (\ref{eq:ge1}), as
\begin{align} \label{eq:axisd}
& 2\,\pi\,(\vec x\,.\,\vec\ek_2)\,\mathcal{V} = - 2\,\pi \left[
\vec x\,.\,\vec\ek_2 \left[
\varkappa - \frac{\vec\nu\,.\,\vec\ek_2}{\vec x\,.\,\vec\ek_2}\right]_s
\right]_s
= - 2\,\pi \left[
\vec x\,.\,\vec\ek_2 \left[ g^\frac12(\vec x)\,\varkappa_g\right]_s \right]_s
\nonumber \\ & \iff \qquad
\mathcal{V}_g =  - \left[ g^\frac12(\vec x)
\left[ g^\frac12(\vec x)\,\varkappa_g\right]_s \right]_s ,
\end{align}
see \cite{axisd}, where we have noted (\ref{eq:varkappag}), (\ref{eq:nug}) and
(\ref{eq:Vg}). Hence (\ref{eq:axisd}) is dramatically different from 
the curve diffusion flow (\ref{eq:sdg}) for (\ref{eq:ge1}).
Once again we note that, in contrast to (\ref{eq:sdg}), the flow
(\ref{eq:axisd}) is invariant under constant rescalings of $g$, e.g.\ both
(\ref{eq:ge1}) and (\ref{eq:gmu}) with $\mu=-1$ lead to the same flow
(\ref{eq:axisd}). Solutions of (\ref{eq:axisd}) conserve the quantity
$2\,\pi\,\int_{\Omega(t)} \vec z\,.\,\vec\ek_2 \dz =
\int_{\Omega(t)} g^\frac12(\vec z) \dz$
in time, which again differs from (\ref{eq:dAgdt0}), recall (\ref{eq:Ag}). 

\begin{rem} \label{rem:axi}
The metric (\ref{eq:ge1}) can be generalized to model the evolution
of axisymmetric hypersurfaces $\mathcal{S}(t)$ in $\bR^d$, $d\geq 3$. 
In particular, we let
\begin{equation} \label{eq:gaxi}
g(\vec z) = 
[\varsigma(d-1)]^2\,(\vec z\,.\,\vec\ek_2)^{2\,(d-2)}
\quad\text{ and } \quad
H = \bH^2\,,
\end{equation}
where $\varsigma(n) = {2\,\pi^{\frac{n}2}}[\Gamma(\frac{n}2)]^{-1}$ 
denotes the surface area of the $n$-dimensional unit ball.
Then mean curvature flow, (\ref{eq:mcfS}), is given by 
\begin{equation} \label{eq:aximcf2}
\mathcal{V} = \varkappa - 
(d-2)\,\frac{\vec\nu\,.\,\vec\ek_2}{\vec x\,.\,\vec\ek_2}
= g^\frac12(\vec x)\,\varkappa_g
\qquad \iff \qquad
\mathcal{V}_g = g(\vec x)\,\varkappa_g \,,
\end{equation}
in terms of the metric (\ref{eq:gaxi}), where we have recalled
(\ref{eq:varkappag}), (\ref{eq:nug}) and (\ref{eq:Vg}). 
We note that (\ref{eq:aximcf2}) collapses to (\ref{eq:aximcf})
in the case $d=3$.
Surface diffusion, (\ref{eq:sdS}), 
is still given by the last equation in (\ref{eq:axisd}), now for the metric
(\ref{eq:gaxi}). These results 
can be rigorously shown by extending the results in
Appendix~B in \cite{axisd} from $\bR^3$ to $\bR^d$, with the help of 
generalised spherical coordinates.

Using the techniques developed in the present paper, 
it is then possible to derive
weak formulations and stable finite element schemes 
for mean curvature flow and surface diffusion 
of axisymmetric hypersurfaces in $\bR^d$, $d\geq3$, similarly to
the special case $d=3$ treated in \cite{aximcf,axisd}. 
\end{rem}

In the recent paper \cite{axisd}, the authors considered numerical
approximations of Willmore flow for axisymmetric surfaces. The Willmore energy
for the surface $\mathcal{S}$ generated by $\Gamma(t)$ through rotation 
around the $x_1$--axis is given by  
\begin{equation} \label{eq:W}
W_{\mathcal{S}}(\vec x) = \pi\,\int_I \vec x\,.\,\vec\ek_2
\left(\varkappa - \frac{\vec\nu\,.\,\vec\ek_2}{\vec x\,.\,\vec\ek_2}\right)^2
|\vec x_\rho| \drho\,, 
\end{equation}
recall \cite{axisd}. In terms of the metric (\ref{eq:ge1}), 
on recalling (\ref{eq:varkappag}), (\ref{eq:nug}) and (\ref{eq:normg}), 
this can be rewritten as
\begin{equation} \label{eq:Wge1}
W_{\mathcal{S}}(\vec x) = \tfrac12\,\int_I  
g(\vec x)\,\varkappa_g^2 \, |\vec x_\rho|_g \drho\,, 
\end{equation}
which clearly differs from $W_g(\vec x) = \tfrac12\,\int_I 
\varkappa_g^2 \, |\vec x_\rho|_g \drho$, as defined
in (\ref{eq:Wg}).
Hence the flow (\ref{eq:g_elastflow}), for (\ref{eq:gmu}) with $\mu=-1$, 
has no relation at all to
the Willmore flow of axisymmetric surfaces. However, for the metric
(\ref{eq:ghypbol}) it holds, on recalling 
(\ref{eq:mu_varkappag}) for $\mu=1$, (\ref{eq:normg}), (\ref{eq:W}), 
(\ref{eq:varkappa}) and as $I$ is periodic, that
\begin{align} \label{eq:Whypbol}
W_g(\vec x) & = \tfrac12\,\int_I 
\varkappa_g^2 \, |\vec x_\rho|_g \drho
= \tfrac12\,\int_I \vec x\,.\,\vec\ek_2
\left(\varkappa + \frac{\vec\nu\,.\,\vec\ek_2}{\vec x\,.\,\vec\ek_2}\right)^2
|\vec x_\rho| \drho \nonumber \\ &
= (2\,\pi)^{-1}\,W_{\mathcal{S}}(\vec x)
+ 2\, \int_I \varkappa\,\vec\nu\,.\,\vec\ek_2\,|\vec x_\rho|  \drho
= (2\,\pi)^{-1}\,W_{\mathcal{S}}(\vec x)
+ 2\, \int_I \vec x_{ss}\,.\,\vec\ek_2 \,|\vec x_\rho| \drho \nonumber \\ &
= (2\,\pi)^{-1}\,W_{\mathcal{S}}(\vec x)\,,
\end{align}
see also \citet[\S2.2.1]{DallAcquaS17preprint}.
Hence there is a close relation between the hyperbolic elastic flow,
(\ref{eq:hb_elastflow}), and 
Willmore flow for axisymmetric surfaces. In particular, on 
recalling (\ref{eq:Whypbol}), (\ref{eq:newT67}), (\ref{eq:mu_S0}) and
(\ref{eq:gmu}) for $\mu=1$, (\ref{eq:Vg}) and
(\ref{eq:normg}), it holds that
\begin{align}
\ddt\,W_S(\vec x(t)) & = 2\,\pi\,\ddt\,W_g(\vec x(t))
= 2\,\pi\,\int_I \left[  
(\varkappa_g)_{s_gs_g} + \tfrac12\,\varkappa_g^3 - \varkappa_g 
\right] \mathcal{V}_g \,|\vec x_\rho|_g \drho \nonumber \\ &
= 2\,\pi\,\int_I \left[  
(\varkappa_g)_{s_gs_g} + \tfrac12\,\varkappa_g^3 - \varkappa_g 
\right] g(\vec x)\,\mathcal{V} \,|\vec x_\rho| \drho \nonumber \\ &
= 2\,\pi\,\int_I \left[  
(\varkappa_g)_{s_gs_g} + \tfrac12\,\varkappa_g^3 - \varkappa_g 
\right] g^{\frac32}(\vec x)\,
\vec x\,.\,\vec\ek_2\,\mathcal{V} \,|\vec x_\rho| \drho \,.
\label{eq:dtWS}
\end{align}
Hence Willmore flow for axisymmetric surfaces, i.e.\ the 
$L^2\!\mid_{\mathcal{S}}$--gradient flow of (\ref{eq:W}), can be written as
\begin{equation} \label{eq:new_WF}
\mathcal{V} = g^{\frac32}(\vec x)\,
\left( - (\varkappa_g)_{s_gs_g} - \tfrac12\,\varkappa_g^3 
+ \varkappa_g \right)
\iff
g^{-2}(\vec x)\,
\mathcal{V}_g = 
- (\varkappa_g)_{s_gs_g} - \tfrac12\,\varkappa_g^3 + \varkappa_g 
\, ,
\end{equation}
i.e.\ the two flows only differ via a space-dependent weighting, recall
(\ref{eq:hb_elastflow}).
In particular, steady states and minimizers of the two flows agree.

\setcounter{equation}{0}
\section{Finite element approximations} \label{sec:fd}

Let $[0,1]=\cup_{j=1}^J I_j$, $J\geq3$, be a
decomposition of $[0,1]$ into intervals given by the nodes $q_j$,
$I_j=[q_{j-1},q_j]$. 
For simplicity, and without loss of generality,
we assume that the subintervals form an equipartitioning of $[0,1]$,
i.e.\ that 
\begin{equation} \label{eq:Jequi}
q_j = j\,h\,,\quad \mbox{with}\quad h = J^{-1}\,,\qquad j=0,\ldots, J\,.
\end{equation}
Clearly, as $I=\RZ$ we identify $0=q_0 = q_J=1$.

The necessary finite element spaces are defined as follows:
\begin{align*}
V^h & = \{\chi \in C(I) : \chi\!\mid_{I_j} 
\mbox{ is linear}\ \forall\ j=1\to J\}
\quad\text{and}\quad \Vh = [V^h]^2\,.\end{align*}
Let $\{\chi_j\}_{j=1}^J$ denote the standard basis of $V^h$,
and let $\pi^h:C(I)\to V^h$ 
be the standard interpolation operator at the nodes $\{q_j\}_{j=1}^J$.

Let $(\cdot,\cdot)$ denote the $L^2$--inner product on $I$, and 
define the mass lumped $L^2$--inner product $(u,v)^h$,
for two piecewise continuous functions, with possible jumps at the 
nodes $\{q_j\}_{j=1}^J$, via
\begin{equation}
( u, v )^h = \tfrac12\sum_{j=1}^J h_j\,
\left[(u\,v)(q_j^-) + (u\,v)(q_{j-1}^+)\right],
\label{eq:ip0}
\end{equation}
where we define
$u(q_j^\pm)=\underset{\delta\searrow 0}{\lim}\ u(q_j\pm\delta)$.
The definition (\ref{eq:ip0}) naturally extends to vector valued functions.

Let $0= t_0 < t_1 < \ldots < t_{M-1} < t_M = T$ be a
partitioning of $[0,T]$ into possibly variable time steps 
$\ttau_m = t_{m+1} - t_{m}$, $m=0\to M-1$. 
We set $\ttau = \max_{m=0\to M-1}\ttau_m$.
For a given $\vec{X}^m\in \Vh$ we set
$\vec\nu^m = - \frac{[\vec X^m_\rho]^\perp}{|\vec X^m_\rho|}$, as the discrete
analogue to (\ref{eq:tau}). Given $\vec{X}^m\in \Vh$, the fully discrete
approximations we propose in this section will always seek a 
parameterization $\vec{X}^{m+1}\in \Vh$ at the new time level, together with a
suitable approximation of curvature. One class of schemes will rely on the
following discrete analogue of (\ref{eq:weak_varkappa}).
Let $\kappa^{m+1} \in V^h$ be such that
\begin{equation}
\left(\kappa^{m+1}\,\vec\nu^m, \vec\eta\,|\vec X^m_\rho|\right)^h
+ \left(\vec X^{m+1}_\rho, \vec\eta_\rho\,|\vec X^m_\rho|^{-1}\right) 
= 0 \qquad \forall\ \vec\eta \in \Vh\,.
\label{eq:fdb}
\end{equation}
We note that any of the schemes featuring the side constraint
(\ref{eq:fdb}), i.e.\ $(\BGNmckappa_m)^h$, $(\BGNsd_m)^h$
and $(\BGNwf_{m})^h$, below, exhibit a discrete tangential velocity
that leads to a good distribution of vertices.
In particular, a steady state $\Gamma^m = \vec X^m(I)$ 
will satisfy a weak equidistribution property,
i.e.\ any two neighbouring elements are either parallel or of the same length.
Moreover, for general evolutions the distribution of vertices
tends to equidistribution, with the convergence being faster for smaller time
step sizes. The reason is that any curve $\Gamma^m = \vec X^m(I)$, for which
there exists a $\kappa\in V^h$ such that
\begin{equation} \label{eq:quid}
\left(\kappa\,\vec\nu^m, \vec\eta\,|\vec X^m_\rho|\right)^h
+ \left(\vec X^{m}_\rho, \vec\eta_\rho\,|\vec X^m_\rho|^{-1}\right) 
= 0 \qquad \forall\ \vec\eta \in \Vh\,,
\end{equation}
can be shown to satisfy the weak equidistribution property. In particular, the
obvious semidiscrete variants of $(\BGNmckappa_m)^h$, 
$(\BGNsd_m)^h$ and $(\BGNwf_{m})^h$ 
satisfy the weak equidistribution property at every time $t>0$.
We refer to \citet[Rem.\ 2.4]{triplej} and to \cite{fdfi} for more details.

Two other classes of schemes, which will also exhibit nontrivial discrete
tangential motions, will be based on discrete analogues of 
(\ref{eq:weak_gkgnu}). The first variant is given as follows.
Let $\kappa_g^{m+1} \in V^h$ be such that
\begin{align}
& \left(g(\vec X^m)\,\kappa_g^{m+1}\,\vec\nu^m,
\vec\eta\,|\vec X^m_\rho|\right)^{h}
+ \left( \nabla\,g^\frac12(\vec X^{m}) ,
\vec\eta \, |\vec X^{m}_\rho| \right)^{h}
+ \left( g^\frac12(\vec X^{m})\,
\vec X^{m+1}_\rho,\vec\eta_\rho\, |\vec X^m_\rho|^{-1} \right)^{h}
\nonumber \\ & \hspace{9cm}
= 0 
\qquad \forall\ \vec\eta \in \Vh\,.
\label{eq:fdnewb}
\end{align}
Schemes based on (\ref{eq:fdnewb}) will still be linear, but their induced 
tangential motion does not lead to equidistribution. In order to allow for
stability proofs, we now adapt the time discretization in (\ref{eq:fdnewb}). 
In particular, we make use of a convex/concave
splitting of the energy density $g^\frac12$ in (\ref{eq:Lg}). This idea,
for the case of a scalar potential $\Psi : \bR \to \bR$,
goes back to \cite{ElliottS93}, and we adapt their approach to the situation
here, i.e.\ $g^\frac12 : \bR^2 \supset H \to \bRplus$.
In particular, we assume that we can split $g^\frac12$ into
\begin{equation} \label{eq:gsplit}
g^\frac12 = g^\frac12_+ + g^\frac12_-
\quad\text{ such that $\,\pm g^\frac12_\pm\,$ is convex on $H$.}
\end{equation}
Note that such a splitting exists if $D^2\,g^\frac12$ is bounded from below on
$H$, in the sense that there exists a symmetric positive semidefinite 
matrix $A \in \bR^{2\times2}$
such that $D^2\, g^\frac12(\vec z) + A$ is symmetric positive semidefinite 
for all $\vec z \in H$. For example, the splitting can then be chosen such that
$g^\frac12_+(\vec z) = g^\frac12(\vec z) + \tfrac12\,\vec z\,.\,A\,\vec z$
and $g^\frac12_-(\vec z) = - \tfrac12\,\vec z\,.\,A\,\vec z$.
It follows from the splitting in (\ref{eq:gsplit}) that
\begin{equation} \label{eq:gsplitstab}
\nabla\,[g^\frac12_+(\vec u) + g^\frac12_-(\vec v)]\,.\,(\vec u - \vec v) \geq
g^\frac12(\vec u) - g^\frac12(\vec v) \qquad \forall\ \vec u, \vec v \in H\,.
\end{equation}
The alternative discrete analogue of (\ref{eq:weak_gkgnu}),
compared to (\ref{eq:fdnewb}), is then given as
follows. Let $\kappa_g^{m+1} \in V^h$ be such that
\begin{align}
& \left(g(\vec X^m)\,\kappa_g^{m+1}\,\vec\nu^m,
\vec\eta\,|\vec X^m_\rho|\right)^{h}
+ \left( \nabla\,[g^\frac12_+(\vec X^{m+1}) + g^\frac12_-(\vec X^{m})],
\vec\eta \, |\vec X^{m+1}_\rho| \right)^{h}
\nonumber \\ & \hspace{4cm}
+ \left( g^\frac12(\vec X^{m})\,
\vec X^{m+1}_\rho,\vec\eta_\rho\, |\vec X^m_\rho|^{-1} \right)^{h}
= 0 \quad \forall\ \vec\eta \in \Vh\,.
\label{eq:fdnonlinearb}
\end{align}
We note that, in contrast to (\ref{eq:fdnewb}), the side constraint 
(\ref{eq:fdnonlinearb}) will lead to nonlinear schemes.

We observe that in the cases (\ref{eq:ghypbol}--c) and (\ref{eq:gmu}) with 
$\mu \in \bR\setminus (-1,0)$ a splitting of the form (\ref{eq:gsplit}) 
exists. In particular, for $\mu \in \bR\setminus (-1,0)$ the function
$g^\frac12(\vec z) = (\vec z\,.\,\vec\ek_2)^{-\mu}$ is convex on
$H = \bH^2$, since $D^2\,g^\frac12(\vec z) = 
\mu\,(\mu+1)\,(\vec z\,.\,\vec\ek_2)^{-(\mu+2)}\,
\vec\ek_2\otimes\vec\ek_2$
is positive semidefinite for $\vec z \in \bH^2$. Hence 
we can choose 
\begin{equation} \label{eq:gmusplit}
g^\frac12_+(\vec z) = g^\frac12(\vec z) = (\vec z\,.\,\vec\ek_2)^{-\mu}
\quad\text{and}\quad
g^\frac12_-(\vec z) = 0\,,
\end{equation}
with $\nabla\,g^\frac12_+(\vec z) 
= - \mu\,(\vec z\,.\,\vec\ek_2)^{-(\mu+1)}\,\vec\ek_2$.
Moreover, for the class of metrics (\ref{eq:galpha}) a splitting of the form  
(\ref{eq:gsplit}) also exists. To this end, we note that 
$D^2\,g^\frac12(\vec z) = 
16\,\alpha^2\,(1 - \alpha\,|\vec z|^2)^{-3}\,\vec z \otimes \vec z
+ 4\,\alpha\,(1 - \alpha\,|\vec z|^2)^{-2} \,\mat\Id$. Clearly, if $\alpha>0$
then $g^\frac12$ is convex on $H$. If $\alpha \leq 0$, on the other hand,
then $D^2\,g^\frac12$ is clearly the sum of a positive semidefinite and a 
negative semidefinite matrix, with $A=-4\,\alpha\,\mat\Id$ being such that
$D^2\, g^\frac12 + A$ is symmetric positive semidefinite on $H$. 
Hence we can choose
\begin{equation} \label{eq:galphasplit}
\begin{cases}
g^\frac12_+(\vec z) = g^\frac12(\vec z) 
\quad\text{and}\quad
g^\frac12_-(\vec z) = 0 & \alpha > 0 \,,\\
g^\frac12_+(\vec z) = g^\frac12(\vec z) - 2\,\alpha\,|\vec z|^2
\quad\text{and}\quad
g^\frac12_-(\vec z) = 2\,\alpha\,|\vec z|^2
 & \alpha \leq 0\,.
\end{cases}
\end{equation}
Similarly, for the metric (\ref{eq:gMercator}) we note that
\[D^2\,g^\frac12(\vec z) = (\tanh^2(\vec z\,.\,\vec\ek_1) -
\cosh^{-2}(\vec z\,.\,\vec\ek_1))\,\cosh^{-1}(\vec z\,.\,\vec\ek_1)\,
\vec\ek_1\otimes\vec\ek_1\,,\] 
and so we can choose
\begin{equation} \label{eq:gMercatorplit}
g^\frac12_+(\vec z) = g^\frac12(\vec z) + \tfrac12\,(\vec z\,.\,\vec\ek_1)^2
\quad\text{and}\quad
g^\frac12_-(\vec z) = - \tfrac12\,(\vec z\,.\,\vec\ek_1)^2\,.
\end{equation}
For the metric (\ref{eq:gcatenoid}) we observe that
$D^2\,g^\frac12(\vec z) = \cosh(\vec z\,.\,\vec\ek_1)\,
\vec\ek_1\otimes\vec\ek_1$, and so we can choose
\begin{equation} \label{eq:gcatenoidsplit}
g^\frac12_+(\vec z) = g^\frac12(\vec z) 
\quad\text{and}\quad
g^\frac12_-(\vec z) = 0\,.
\end{equation}
Finally, for the metric (\ref{eq:gtorus}) we note that
\[D^2\,g^\frac12(\vec z) = \mathfrak s \left[
\frac{2\,\sin^2(\vec z\,.\,\vec\ek_2)}
{([\mathfrak s^2 + 1]^\frac12 - \cos (\vec z\,.\,\vec\ek_2))^{3}} - 
\frac{\cos (\vec z\,.\,\vec\ek_2)}
{([\mathfrak s^2 + 1]^\frac12 - \cos (\vec z\,.\,\vec\ek_2))^{2}} 
\right] \vec\ek_2\otimes\vec\ek_2\,,\] 
and so we can choose
\begin{equation} \label{eq:gtorussplit}
g^\frac12_+(\vec z) = g^\frac12(\vec z) 
+ \tfrac12\,\mathfrak s\,([\mathfrak s^2 + 1]^\frac12 - 1)^{-2}
\,(\vec z\,.\,\vec\ek_2)^2
\quad\text{and}\quad
g^\frac12_-(\vec z) = g^\frac12(\vec z) - g^\frac12_+(\vec z) \,.
\end{equation}

For the metrics we consider in this paper, we summarize in 
Table~\ref{tab:g} the quantities that are necessary in order to implement the
numerical schemes presented below.
\begin{table}
\center
\def\arraystretch{1.75}%
\begin{tabular}{|c|c|c|c|c|}
\hline
$g$ & 
$\tfrac12\,\vec\nu\,.\,\nabla\,\ln g(\vec x)$ & 
$\nabla\,g^\frac12(\vec x)$ &
$\nabla\,g^\frac12_-(\vec x)$ &
$S_0(\vec x)$ \\
\hline 
(\ref{eq:gmu}) & 
$- \mu\,\frac{\vec\nu\,.\,\vec\ek_2}{\vec x\,.\,\vec\ek_2}$ &
$- \frac\mu{(\vec x\,.\,\vec\ek_2)^{\mu+1}}\,\vec\ek_2$ &
$0$ & 
$-\mu\,(\vec x\,.\,\vec\ek_2)^{2\,(\mu-1)}$ \\
(\ref{eq:galpha}) & 
$\frac{2\,\alpha\,\vec x\,.\,\vec\nu}{1 - \alpha\,|\vec x|^2}$ & 
$\frac{4\,\alpha}{( 1 - \alpha\, |\vec x|^2)^{2}}\,\vec x$ & 
$4\, [\alpha]_-\,\vec x$ &
$-\alpha$ \\ 
(\ref{eq:gMercator}) & $-\tanh(\vec x\,.\,\vec\ek_1)\,\vec \nu\,.\,\vec \ek_1$
& 
$-\frac{\tanh(\vec x\,.\,\vec\ek_1)}{\cosh(\vec x\,.\,\vec\ek_1)}\,\vec\ek_1$ & 
$- (\vec x\,.\,\vec\ek_1)\,\vec\ek_1$ & $1$ 
\\
(\ref{eq:gcatenoid}) & $\tanh(\vec x\,.\,\vec\ek_1)\,\vec \nu \,.\,\vec \ek_1$
& 
$\sinh(\vec x\,.\,\vec\ek_1)\,\vec\ek_1$ &
$0$ & $- \cosh^{-4}(\vec x\,.\,\vec\ek_1)$
\\
(\ref{eq:gtorus}) & $-\frac{\sin(\vec x\,.\,\vec\ek_2)\,\vec \nu\,.\,\vec\ek_2}
{[\mathfrak s^2 + 1]^\frac12 - \cos(\vec x\,.\,\vec\ek_2)}$
& 
$-\frac{\mathfrak s\,\sin(\vec x\,.\,\vec\ek_2)}
{([\mathfrak s^2 + 1]^\frac12 - \cos(\vec x\,.\,\vec\ek_2))^2}\,\vec\ek_2$ & 
$- \frac{\mathfrak s\,\vec x\,.\,\vec\ek_2}
{([\mathfrak s^2 + 1]^\frac12 - 1)^{2}}\,\vec\ek_2$ & 
$\frac{[\mathfrak s^2 + 1]^\frac12 \cos(\vec x\,.\,\vec\ek_2)  - 1}
{\mathfrak s^2}$
\\
\hline
\end{tabular}
\caption{Expressions for terms that are relevant for the implementation of the
presented finite element approximations. 
Here $[\alpha]_- := \min\{0,\alpha\}$.}
\label{tab:g}
\end{table}%

\subsection{Curvature flow}

We consider the following fully discrete analogue of $(\BGNmckappa)$, i.e.\
(\ref{eq:xtweak}), (\ref{eq:weak_varkappa}). \\
\noindent
$(\BGNmckappa_m)^h$:
Let $\vec X^0 \in \Vh$. For $m=0,\ldots,M-1$, 
find $(\vec X^{m+1}, \kappa^{m+1}) \in \Vh \times V^h$ such that
(\ref{eq:fdb}) holds and 
\begin{align}
& \left(g(\vec X^m)\,
\frac{\vec X^{m+1} - \vec X^m}{\ttau_m}, \chi\,\vec\nu^m\,|\vec
X^m_\rho|\right)^h
= \left(\kappa^{m+1} - 
 \tfrac12\,\vec\nu^m\,.\,\nabla\,\ln g(\vec X^m) , 
\chi\,|\vec X^m_\rho|\right)^h 
\nonumber \\ & \hspace{11cm}
\quad \forall\ \chi \in V^h\,. \label{eq:fda}
\end{align}
We remark that the scheme $(\BGNmckappa_m)^h$,
in the case (\ref{eq:geuclid}), collapses to
the scheme \citet[(2.3a,b)]{triplejMC}, with $f = \id$, 
for Euclidean curve shortening flow.

We make the following mild assumption.
\begin{tabbing}
$(\mathfrak A)^{h}$\quad \=
Let $|\vec{X}^m_\rho| > 0$ for almost all $\rho\in I$, and let
$\dim \spa \mathcal Z^{h} = 2$, where \\ \> $\mathcal Z^{h} = 
\left\{ \left( g(\vec X^m)\,\vec\nu^m , \chi |\vec X^m_\rho| \right)^{h} 
: \chi \in V^h \right \} \subset \bR^2$.
\end{tabbing}

\begin{lem} \label{lem:ex}
Let the assumption $(\mathfrak A)^h$ hold.
Then there exists a unique solution \linebreak
$(\vec X^{m+1}, \kappa^{m+1}) \in \Vh \times V^h$ to 
$(\BGNmckappa_m)^h$.
\end{lem}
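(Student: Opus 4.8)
The plan is to use that $(\BGNmckappa_m)^h$, i.e.\ (\ref{eq:fdb}) and (\ref{eq:fda}), is a linear system for $(\vec X^{m+1},\kappa^{m+1})\in\Vh\times V^h$ whose number of unknowns, $\dim(\Vh\times V^h)=3J$, equals its number of scalar equations ($2J$ from testing (\ref{eq:fdb}) against a basis of $\Vh$ and $J$ from testing (\ref{eq:fda}) against a basis of $V^h$). Being a square system, it is uniquely solvable precisely when the associated homogeneous problem has only the trivial solution. So it suffices to show: if $(\vec X,\kappa)\in\Vh\times V^h$ satisfies $\big(\kappa\,\vec\nu^m,\vec\eta\,|\vec X^m_\rho|\big)^h+\big(\vec X_\rho,\vec\eta_\rho\,|\vec X^m_\rho|^{-1}\big)=0$ for all $\vec\eta\in\Vh$ and $\big(g(\vec X^m)\,\vec X\cdot\vec\nu^m,\chi\,|\vec X^m_\rho|\big)^h=\ttau_m\,\big(\kappa,\chi\,|\vec X^m_\rho|\big)^h$ for all $\chi\in V^h$ (the contributions of $\vec X^m$ and of $\nabla\ln g(\vec X^m)$ in (\ref{eq:fda}) are data and hence absent from the homogeneous problem), then $\vec X=\vec 0$ and $\kappa=0$.

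First I would test the homogeneous (\ref{eq:fdb}) with $\vec\eta=\vec X$, obtaining $\big(\kappa\,\vec\nu^m,\vec X\,|\vec X^m_\rho|\big)^h=-\big(\vec X_\rho,\vec X_\rho\,|\vec X^m_\rho|^{-1}\big)\le 0$. The delicate point is to match this against the homogeneous (\ref{eq:fda}): the weight $g(\vec X^m)$ means that the obvious choice $\chi=\kappa$ does not reproduce the left-hand side above. The device is to test instead with $\chi=\pi^h\big[g^{-1}(\vec X^m)\,\kappa\big]\in V^h$ (well defined since $g>0$ on $H$): since the mass-lumped product $(\cdot,\cdot)^h$ evaluates its arguments only at the nodes $\{q_j\}_{j=1}^J$, where $\pi^h\big[g^{-1}(\vec X^m)\,\kappa\big]$ coincides with $g^{-1}(\vec X^m)\,\kappa$, the factors $g(\vec X^m)$ and $g^{-1}(\vec X^m)$ cancel, and the homogeneous (\ref{eq:fda}) becomes $\big(\kappa\,\vec\nu^m,\vec X\,|\vec X^m_\rho|\big)^h=\ttau_m\,\big(g^{-1}(\vec X^m)\,\kappa,\kappa\,|\vec X^m_\rho|\big)^h$. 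Combining the two identities gives
\begin{equation*}
0\le\ttau_m\,\big(g^{-1}(\vec X^m)\,\kappa,\kappa\,|\vec X^m_\rho|\big)^h=\big(\kappa\,\vec\nu^m,\vec X\,|\vec X^m_\rho|\big)^h=-\big(\vec X_\rho,\vec X_\rho\,|\vec X^m_\rho|^{-1}\big)\le 0\,,
\end{equation*}
so, on recalling $\ttau_m>0$, $g>0$ and $|\vec X^m_\rho|>0$ (the latter from $(\mathfrak A)^h$), every term vanishes. The vanishing of the first term forces all nodal values of $\kappa$ to be zero, i.e.\ $\kappa\equiv 0$, and the vanishing of the last forces $\vec X_\rho\equiv\vec 0$, i.e.\ $\vec X\equiv\vec c$ for some $\vec c\in\bR^2$.

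The last step is to kill the constant: inserting $\kappa=0$ and $\vec X\equiv\vec c$ into the homogeneous (\ref{eq:fda}) yields $\vec c\cdot\big(g(\vec X^m)\,\vec\nu^m,\chi\,|\vec X^m_\rho|\big)^h=0$ for all $\chi\in V^h$, i.e.\ $\vec c$ is orthogonal to $\mathcal{Z}^h$; since $(\mathfrak A)^h$ guarantees $\dim\spa\mathcal{Z}^h=2$, i.e.\ $\spa\mathcal{Z}^h=\bR^2$, we conclude $\vec c=\vec 0$, hence $(\vec X,\kappa)=(\vec 0,0)$ and $(\BGNmckappa_m)^h$ is uniquely solvable. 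I expect the one genuinely non-routine step to be the cancellation device in the second paragraph: it is exactly mass lumping that makes $\pi^h\big[g^{-1}(\vec X^m)\,\kappa\big]$ an admissible test function absorbing the weight $g(\vec X^m)$, so that the argument reduces to the Euclidean-type one; with the exact $L^2$ inner product this manoeuvre would break down, which is presumably why the scheme is formulated with a lumped mass matrix.
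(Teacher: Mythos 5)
Your proof is correct and follows essentially the same route as the paper: reduce to the homogeneous linear system, test with $\vec\eta=\vec X$ and $\chi=\pi^h[g^{-1}(\vec X^m)\,\kappa]$ so that the lumped inner product cancels the weight $g(\vec X^m)$, deduce $\kappa=0$ and $\vec X$ constant from the resulting nonnegative identity, and then use $(\mathfrak A)^h$ to eliminate the constant. Your extra remarks (the dimension count for the square system and the role of mass lumping in making the cancellation exact) are accurate but not needed beyond what the paper's argument already contains.
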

\begin{proof}
As (\ref{eq:fda}), (\ref{eq:fdb}) is linear, existence follows from uniqueness. 
To investigate the latter, we consider the system: 
Find $(\vec X,\kappa) \in \Vh \times V^h$ such that
\begin{subequations}
\begin{align}
\left(g(\vec X^m)\,\frac{\vec X}{\ttau_m}, \chi\,\vec\nu^m\,|\vec
X^m_\rho|\right)^h
= \left(\kappa ,
\chi\,|\vec X^m_\rho|\right)^h 
\qquad \forall\ \chi \in V^h\,, \label{eq:proofa}\\
\left(\kappa\,\vec\nu^m, \vec\eta\,|\vec X^m_\rho|\right)^h
+ \left(\vec X_\rho, \vec\eta_\rho\,|\vec X^m_\rho|^{-1}\right) = 0 
\qquad \forall\ \vec\eta \in \Vh\,.
\label{eq:proofb}
\end{align}
\end{subequations}
Choosing $\chi=\pi^h[g^{-1}(\vec X^m)\,\kappa]\in V^h$ in (\ref{eq:proofa}) and 
$\vec\eta= \vec X \in \Vh$ in (\ref{eq:proofb}) yields that
\begin{equation} \label{eq:unique0}
\left(|\vec X_\rho|^2, |\vec X^m_\rho|^{-1}\right)
+\ttau_m \left(g^{-1}(\vec X^m)\,|\kappa|^2 , |\vec X^m_\rho|\right)^h = 0\,.
\end{equation}
It follows from (\ref{eq:unique0}) that $\kappa = 0$ and that
$\vec X \equiv \vec X^c\in\bR^2$; and hence that
\begin{equation}
0 = \left(g(\vec X^m)\,\vec X^c, \chi\,\vec\nu^m\,|\vec X^m_\rho|\right)^h = 
\vec X^c \,. \left(g(\vec X^m)\,\vec\nu^m, \chi\,|\vec X^m_\rho|\right)^h 
\quad \forall\ \chi \in V^h \,. \label{eq:unique1}
\end{equation}
It follows from (\ref{eq:unique1}) and
assumption $(\mathfrak A)^h$ that $\vec X^c=\vec0$.
Hence we have shown that $(\BGNmckappa_m)^h$ has a unique solution
$(\vec X^{m+1},\kappa^{m+1}) \in \Vh\times V^h$.
\end{proof}

We consider the following fully discrete analogue of $(\GDmckappa)$, i.e.\
(\ref{eq:Dziuka},b). \\ 
\noindent
$(\GDmckappa_m)^h$:
Let $\vec X^0 \in \Vh$. For $m=0,\ldots,M-1$, 
find $(\vec X^{m+1}, \vec\kappa^{m+1}) \in \Vh \times \Vh$ such that
\begin{subequations}
\begin{align}
& 
\left(g(\vec X^m)\,
\frac{\vec X^{m+1} - \vec X^m}{\ttau_m}, \vec\chi\,|\vec X^m_\rho|
\right)^h
= \left(\vec\kappa^{m+1} - 
\tfrac12\,[\vec\nu^m\,.\,\nabla\,\ln
g(\vec X^m)]\,\vec\nu^m , \vec\chi\,|\vec X^m_\rho|\right)^h 
\nonumber \\ & \hspace{11cm}
\qquad \forall\ \vec\chi \in \Vh\,, \label{eq:fddziuka}\\
& \left(\vec\kappa^{m+1}, \vec\eta\,|\vec X^m_\rho|\right)^h
+ \left(\vec X^{m+1}_\rho, \vec\eta_\rho\,|\vec X^m_\rho|^{-1}\right)
= 0 \qquad \forall\ \vec\eta \in \Vh\,.
\label{eq:fddziukb}
\end{align}
\end{subequations}
We remark that the scheme $(\GDmckappa_m)^h$,
in the case (\ref{eq:geuclid}), collapses to
the scheme in \citet[\S6]{Dziuk94} for Euclidean curve shortening flow.

\begin{lem} \label{lem:GDex}
There exists a unique solution 
$(\vec X^{m+1}, \vec\kappa^{m+1}) \in \Vh \times \Vh$ to 
$(\GDmckappa_m)^h$.
\end{lem}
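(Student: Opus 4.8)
\emph{Proof plan.} The plan is to mirror, almost verbatim, the proof of Lemma~\ref{lem:ex}. The system $(\GDmckappa_m)^h$, i.e.\ (\ref{eq:fddziuka}), (\ref{eq:fddziukb}), is linear on the finite-dimensional space $\Vh\times\Vh$, so existence will follow from uniqueness, and it suffices to show that the homogeneous problem --- find $(\vec X,\vec\kappa)\in\Vh\times\Vh$ with
\begin{align*}
& \left(g(\vec X^m)\,\tfrac{\vec X}{\ttau_m}, \vec\chi\,|\vec X^m_\rho|\right)^h
= \left(\vec\kappa, \vec\chi\,|\vec X^m_\rho|\right)^h \qquad \forall\ \vec\chi\in\Vh\,,\\
& \left(\vec\kappa, \vec\eta\,|\vec X^m_\rho|\right)^h
+ \left(\vec X_\rho,\vec\eta_\rho\,|\vec X^m_\rho|^{-1}\right) = 0 \qquad\forall\ \vec\eta\in\Vh
\end{align*}
--- admits only $(\vec X,\vec\kappa)=(\vec 0,\vec 0)$. (As elsewhere for these schemes, one tacitly assumes $|\vec X^m_\rho|>0$, so that $\vec\nu^m$ and all the terms above make sense.)

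The key step is to test the first equation with $\vec\chi=\pi^h[g^{-1}(\vec X^m)\,\vec\kappa]\in\Vh$ and the second with $\vec\eta=\vec X\in\Vh$. Since the mass-lumped inner product only evaluates nodal values and $(\pi^h w)(q_j)=w(q_j)$, the factors $g(\vec X^m)$ and $g^{-1}(\vec X^m)$ cancel at every node, so the first equation reduces to
\[
\left(\vec X,\vec\kappa\,|\vec X^m_\rho|\right)^h
= \ttau_m\left(g^{-1}(\vec X^m)\,|\vec\kappa|^2,\,|\vec X^m_\rho|\right)^h\,,
\]
while the second gives $\left(\vec X,\vec\kappa\,|\vec X^m_\rho|\right)^h=-\left(|\vec X_\rho|^2,|\vec X^m_\rho|^{-1}\right)$. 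Combining the two yields the energy identity
\[
\left(|\vec X_\rho|^2,\,|\vec X^m_\rho|^{-1}\right)
+ \ttau_m\left(g^{-1}(\vec X^m)\,|\vec\kappa|^2,\,|\vec X^m_\rho|\right)^h = 0\,,
\]
which is the exact analogue of (\ref{eq:unique0}). As $g>0$, $\ttau_m>0$ and $|\vec X^m_\rho|>0$, both summands are non-negative, hence both vanish: the second forces $\vec\kappa$ to vanish at all nodes, so $\vec\kappa\equiv\vec0$; the first forces $\vec X_\rho\equiv\vec0$, and then continuity together with the periodicity of $I=\RZ$ gives $\vec X\equiv\vec X^c$ for some $\vec X^c\in\bR^2$.

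Substituting $\vec\kappa=\vec0$ and $\vec X=\vec X^c$ back into the first equation leaves $\left(g(\vec X^m)\,\vec X^c,\,\vec\chi\,|\vec X^m_\rho|\right)^h=0$ for all $\vec\chi\in\Vh$. Here lies the one genuine difference from Lemma~\ref{lem:ex}: since the test space is vector-valued, I may simply choose $\vec\chi=\chi_j\,\vec X^c$, which localises the mass-lumped product to the node $q_j$ and produces a strictly positive multiple of $g(\vec X^m(q_j))\,|\vec X^c|^2$; hence $\vec X^c=\vec0$. In particular, no analogue of the rank condition $(\mathfrak A)^h$ is needed here, precisely because $(\GDmckappa_m)^h$ prescribes the full velocity $\vec X_t$ rather than only its normal component. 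The only mildly delicate point in the whole argument is the node-wise bookkeeping for the mass-lumped products and the cancellation of the $g(\vec X^m)^{\pm1}$ factors under $\pi^h$; everything else is a direct transcription of the computation in Lemma~\ref{lem:ex}.
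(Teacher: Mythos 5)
Your proof is correct and follows essentially the same route as the paper: the same linearity-plus-uniqueness reduction, the same test functions $\vec\chi=\vec\pi^h[g^{-1}(\vec X^m)\,\vec\kappa]$ and $\vec\eta=\vec X$ leading to the energy identity (\ref{eq:uniqueGD0}), and the same final substitution into the first equation to kill the constant $\vec X^c$, with your observation that the vector-valued test space makes the rank condition $(\mathfrak A)^h$ unnecessary matching the paper's argument. The extra nodal bookkeeping and the explicit choice $\vec\chi=\chi_j\,\vec X^c$ are just a more detailed write-up of the same step.
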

\begin{proof}
As (\ref{eq:fddziuka},b) is linear, existence follows from uniqueness. 
To investigate the latter, we consider the system: 
Find $(\vec X,\vec\kappa) \in \Vh \times \Vh$ such that
\begin{subequations}
\begin{align}
\left(g(\vec X^m)\,
\frac{\vec X}{\ttau_m}, \vec\chi\,|\vec X^m_\rho|\right)^h
= \left(\vec\kappa ,
\vec\chi\,|\vec X^m_\rho|\right)^h 
\qquad \forall\ \vec\chi \in \Vh\,, \label{eq:proofGDa}\\
\left(\vec\kappa, \vec\eta\,|\vec X^m_\rho|\right)^h
+ \left(\vec X_\rho, \vec\eta_\rho\,|\vec X^m_\rho|^{-1}\right) = 0 
\qquad \forall\ \vec\eta \in \Vh\,.
\label{eq:proofGDb}
\end{align}
\end{subequations}
Choosing $\vec\chi=\vec\pi^h[g^{-1}(\vec X^m)\,\vec\kappa]\in \Vh$ 
in (\ref{eq:proofGDa}) and 
$\vec\eta= \vec X \in \Vh$ in (\ref{eq:proofGDb}) yields that
\begin{equation} \label{eq:uniqueGD0}
\left(|\vec X_\rho|^2, |\vec X^m_\rho|^{-1}\right)
+\ttau_m \left(g^{-1}(\vec X^m)\,|\vec\kappa|^2 , 
|\vec X^m_\rho|\right)^h = 0\,.
\end{equation}
It follows from (\ref{eq:uniqueGD0}) that $\vec\kappa = \vec 0$ and then
from (\ref{eq:proofGDa}) that $\vec X = \vec 0$.
Hence we have shown that (\ref{eq:fddziuka},b) has a unique solution
$(\vec X^{m+1},\vec\kappa^{m+1}) \in \Vh\times\Vh$.
\end{proof}

We consider the following two fully discrete analogues of $(\BGNmc)$, i.e.\
(\ref{eq:bgnnewa}), (\ref{eq:weak_gkgnu}). \\ \noindent 
$(\BGNmc_m)^{h}$:
Let $\vec X^0 \in \Vh$. For $m=0,\ldots,M-1$, 
find $(\vec X^{m+1}, \kappa_g^{m+1}) \in \Vh \times V^h$ such that
(\ref{eq:fdnewb}) holds and
\begin{align}
& \left(g(\vec X^m)\,
\frac{\vec X^{m+1} - \vec X^m}{\ttau_m}, \chi\,\vec\nu^m\,
|\vec X^m_\rho|\right)^{h}
= \left(g^\frac12(\vec X^m)\,\kappa_g^{m+1},
\chi\,|\vec X^m_\rho|\right)^{h} \qquad \forall\ \chi \in V^h\,.
\label{eq:fdnewa} 
\end{align}
\noindent
$(\BGNmc_{m,\star})^{h}$:
Let $\vec X^0 \in \Vh$. For $m=0,\ldots,M-1$, 
find $(\vec X^{m+1}, \kappa_g^{m+1}) \in \Vh \times V^h$ such that
(\ref{eq:fdnonlinearb}) and (\ref{eq:fdnewa}) hold.

We remark that the schemes $(\BGNmc_m)^h$ and
$(\BGNmc_{m,\star})^{h}$, with (\ref{eq:gmusplit}), 
in the case (\ref{eq:geuclid}), collapse to
the scheme \citet[(2.3a,b)]{triplejMC}, with $f = \id$, 
for Euclidean curve shortening flow.

We consider the following two fully discrete analogues of $(\GDmc)$, i.e.\
(\ref{eq:Dziuknewa},b).  
\\ \noindent
$(\GDmc_m)^{h}$:
Let $\vec X^0 \in \Vh$. For $m=0,\ldots,M-1$, 
find $(\vec X^{m+1},\vec\kappa_g^{m+1})\in \Vh \times \Vh$ such that
\begin{subequations}
\begin{align}
& \left(g(\vec X^m)\,
\frac{\vec X^{m+1} - \vec X^m}{\ttau_m}, \vec\chi\,|\vec X^m_\rho|\right)^{h}
= \left(g(\vec X^m)\,\vec\kappa_g^{m+1},
\vec\chi\,|\vec X^m_\rho|\right)^{h} \qquad \forall\ \vec\chi \in \Vh\,,
\label{eq:fddziuknewa} \\
&
\left(g^\frac32(\vec X^m)\,\vec\kappa_g^{m+1},
\vec\eta\,|\vec X^m_\rho|\right)^{h}
+ \left( \nabla\,g^\frac12(\vec X^{m}) ,
\vec\eta \, |\vec X^{m}_\rho| \right)^{h}
+ \left( g^\frac12(\vec X^{m})\,
\vec X^{m+1}_\rho,\vec\eta_\rho\, |\vec X^m_\rho|^{-1} \right)^{h}
\nonumber \\ & \hspace{9cm}
= 0 
\qquad \forall\ \vec\eta \in \Vh\,.
\label{eq:fddziuknewb}
\end{align}
\end{subequations}
\noindent
$(\GDmc_{m,\star})^{h}$:
Let $\vec X^0 \in \Vh$. For $m=0,\ldots,M-1$, 
find $(\vec X^{m+1},\vec\kappa_g^{m+1})\in \Vh \times \Vh$ such that
(\ref{eq:fddziuknewa}) holds and
\begin{align}
&
\left(g^\frac32(\vec X^m)\,\vec\kappa_g^{m+1},
\vec\eta\,|\vec X^m_\rho|\right)^{h}
+ \left( \nabla\,[g^\frac12_+(\vec X^{m+1}) + g^\frac12_-(\vec X^{m})],
\vec\eta \, |\vec X^{m+1}_\rho| \right)^{h}
\nonumber \\ & \hspace{4cm}
+ \left( g^\frac12(\vec X^{m})\,
\vec X^{m+1}_\rho,\vec\eta_\rho\, |\vec X^m_\rho|^{-1} \right)^{h}
= 0 \quad \forall\ \vec\eta \in \Vh\,.
\label{eq:dziukfdnonlinearb}
\end{align}
We remark that the schemes $(\GDmc_m)^h$ and
$(\GDmc_{m,\star})^{h}$, with (\ref{eq:gmusplit}), 
in the case (\ref{eq:geuclid}), collapse to
the scheme in \citet[\S6]{Dziuk94} for Euclidean curve shortening flow.

Overall we observe that $(\BGNmc_m)^{h}$ and $(\GDmc_m)^{h}$ are
linear schemes, while $(\BGNmc_{m,\star})^{h}$ and
$(\GDmc_{m,\star})^{h}$ are nonlinear. For the linear schemes we can
prove existence and uniqueness, while for the nonlinear schemes we can prove
unconditional stability. 

\begin{lem} \label{lem:exnew}
Let the assumption $(\mathfrak A)^{h}$ hold.
Then there exists a unique solution \linebreak
$(\vec X^{m+1},\kappa_g^{m+1}) \in \Vh \times V^h$ to $(\BGNmc_m)^{h}$.
\end{lem}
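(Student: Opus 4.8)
The plan is to follow the template of the proof of Lemma~\ref{lem:ex}. Since the scheme $(\BGNmc_m)^{h}$, i.e.\ (\ref{eq:fdnewa}), (\ref{eq:fdnewb}), is a square linear system for the unknowns $(\vec X^{m+1},\kappa_g^{m+1})\in\Vh\times V^h$, it suffices to prove uniqueness. I would therefore pass to the associated homogeneous problem, obtained by taking the difference of two putative solutions, so that the data contributions ($\vec X^m$ in (\ref{eq:fdnewa}) and $\nabla\,g^\frac12(\vec X^m)$ in (\ref{eq:fdnewb})) cancel: find $(\vec X,\kappa_g)\in\Vh\times V^h$ such that
\begin{align*}
\left(g(\vec X^m)\,\tfrac1{\ttau_m}\,\vec X,\, \chi\,\vec\nu^m\,|\vec X^m_\rho|\right)^h
&= \left(g^\frac12(\vec X^m)\,\kappa_g,\, \chi\,|\vec X^m_\rho|\right)^h
\quad \forall\ \chi\in V^h\,, \\
\left(g(\vec X^m)\,\kappa_g\,\vec\nu^m,\, \vec\eta\,|\vec X^m_\rho|\right)^h
+ \left(g^\frac12(\vec X^m)\,\vec X_\rho,\, \vec\eta_\rho\,|\vec X^m_\rho|^{-1}\right)^h
&= 0
\quad \forall\ \vec\eta\in\Vh\,,
\end{align*}
and show that necessarily $(\vec X,\kappa_g)=(\vec 0,0)$.

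The key step is to choose $\chi=\kappa_g\in V^h$ in the first equation and $\vec\eta=\vec X\in\Vh$ in the second. Because $g$ is smooth and positive on $H$, the weights $g(\vec X^m)$ and $g^\frac12(\vec X^m)$ are continuous on $I$, and hence, directly from the definition (\ref{eq:ip0}) of the mass-lumped inner product, they may be evaluated at the nodes and recombined at will; in particular $\left(g(\vec X^m)\,\tfrac1{\ttau_m}\,\vec X,\,\kappa_g\,\vec\nu^m\,|\vec X^m_\rho|\right)^h=\tfrac1{\ttau_m}\left(g(\vec X^m)\,\kappa_g\,\vec\nu^m,\,\vec X\,|\vec X^m_\rho|\right)^h$. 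Eliminating this common cross term between the two tested identities produces the analogue of (\ref{eq:unique0}),
\[
\left(g^\frac12(\vec X^m)\,|\vec X_\rho|^2,\, |\vec X^m_\rho|^{-1}\right)^h
+ \ttau_m\left(g^\frac12(\vec X^m)\,\kappa_g^2,\, |\vec X^m_\rho|\right)^h = 0\,.
\]
Since $g>0$, $\ttau_m>0$ and $|\vec X^m_\rho|>0$ a.e.\ by assumption $(\mathfrak A)^h$, each summand in (\ref{eq:ip0}) on the left is nonnegative, so all of them vanish. This forces $\kappa_g(q_j)=0$ for every node $q_j$, hence $\kappa_g\equiv0$, and $\vec X_\rho\equiv0$ on every subinterval $I_j$, hence $\vec X\equiv\vec X^c$ for some constant $\vec X^c\in\bR^2$.

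Finally, substituting $\vec X\equiv\vec X^c$ and $\kappa_g\equiv0$ back into the first homogeneous equation, and using that $\vec X^c$ is constant, gives $\vec X^c\,.\,\left(g(\vec X^m)\,\vec\nu^m,\,\chi\,|\vec X^m_\rho|\right)^h=0$ for all $\chi\in V^h$, i.e.\ $\vec X^c$ is orthogonal to $\mathcal Z^h$. As $\dim\spa\mathcal Z^h=2$ by $(\mathfrak A)^h$, we conclude $\vec X^c=\vec 0$, which establishes uniqueness, and therefore---the system being linear and finite-dimensional---existence as well. I expect the only slightly delicate point to be the bookkeeping with the weighted mass-lumped inner products, namely checking from (\ref{eq:ip0}) that the continuous factors $g(\vec X^m)$ and $g^\frac12(\vec X^m)$ can be shuffled so that the cross terms match and the residual quantities come out nonnegative; this, however, is exactly the mechanism already used in Lemmas~\ref{lem:ex} and~\ref{lem:GDex}, so I do not anticipate any real obstacle.
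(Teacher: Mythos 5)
Your proposal is correct and follows essentially the same route as the paper's proof: linearity of $(\BGNmc_m)^h$ reduces existence to uniqueness, testing the homogeneous system with $\chi=\kappa_g$ and $\vec\eta=\vec X$ gives exactly the identity (\ref{eq:uniquenew0}), which forces $\kappa_g=0$ and $\vec X$ constant, and assumption $(\mathfrak A)^h$ then yields $\vec X^c=\vec 0$. The factor-shuffling in the mass-lumped inner product that you flag is indeed harmless, since $(\cdot,\cdot)^h$ depends only on the one-sided nodal values of the product of its arguments, so no extra interpolation step (as in Lemma~\ref{lem:ex}) is needed here, just as in the paper.
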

\begin{proof}
As (\ref{eq:fdnewa}), (\ref{eq:fdnewb}) is linear, 
existence follows from uniqueness. 
To investigate the latter, we consider the system: 
Find $(\vec X, \kappa_g) \in \Vh\times V^h$ such that
\begin{subequations}
\begin{align}
&
\left(g(\vec X^m)\,\frac{\vec X}{\ttau_m}, 
\chi\,\vec\nu^m\,|\vec X^m_\rho|\right)^{h}
= \left(g^\frac12(\vec X^m)\,\kappa_g,
\chi\,|\vec X^m_\rho|\right)^{h} \qquad \forall\ \chi \in V^h\,,
\label{eq:proofnewa} \\
& \left(g(\vec X^m)\,\kappa_g\,\vec\nu^m,\vec\eta\,|\vec X^m_\rho|\right)^{h}
+ \left(g^\frac12(\vec X^m)\,
\vec X_\rho, \vec\eta_\rho\,|\vec X^m_\rho|^{-1}\right) 
= 0 \qquad \forall\ \vec\eta \in \Vh\,.
\label{eq:proofnewb}
\end{align}
\end{subequations}
Choosing $\chi = \kappa_g \in V^h$ in (\ref{eq:proofnewa}) and 
$\vec\eta= \vec X \in \Vh$ in (\ref{eq:proofnewb}) yields that
\begin{equation} \label{eq:uniquenew0}
\left( g^\frac12(\vec X^m) \,
|\vec X_\rho|^2, |\vec X^m_\rho|^{-1}\right)
+ \ttau_m \left(g^\frac12(\vec X^m)\,
|\kappa_g|^2, |\vec X^m_\rho| \right)^{h} = 0\,.
\end{equation}
It immediately
follows from (\ref{eq:uniquenew0}) that $\kappa_g = 0$,
and that $\vec X \equiv \vec X^c\in\bR^2$.
Hence it follows from (\ref{eq:proofnewa}) that
$\vec X^c\,.\,\vec z = 0$ for all $\vec z \in \mathcal Z^{h}$,
and so assumption $(\mathfrak A)^{h}$ yields that $\vec X^c = \vec 0$.
Hence we have shown that $(\BGNmc_m)^{h}$ has a unique solution
$(\vec X^{m+1},\kappa_g^{m+1}) \in \Vh \times V^h$.
\end{proof}

\begin{lem} \label{lem:exdziuk}
Let $|\vec{X}^m_\rho| > 0$ for almost all $\rho\in I$.
Then there exists a unique solution 
$(\vec X^{m+1},\vec\kappa_g^{m+1}) \in \Vh \times \Vh$ to $(\GDmc_m)^{h}$. 
\end{lem}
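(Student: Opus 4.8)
The plan is to proceed exactly as in the proof of Lemma~\ref{lem:GDex}. The system $(\GDmc_m)^h$, i.e.\ (\ref{eq:fddziuknewa},b), is linear in the finitely many degrees of freedom of $(\vec X^{m+1},\vec\kappa_g^{m+1})\in\Vh\times\Vh$, so existence will follow from uniqueness, and for uniqueness it suffices to show that the homogeneous problem — find $(\vec X,\vec\kappa_g)\in\Vh\times\Vh$ with
\begin{align*}
& \left(g(\vec X^m)\,\tfrac{\vec X}{\ttau_m}, \vec\chi\,|\vec X^m_\rho|\right)^{h}
= \left(g(\vec X^m)\,\vec\kappa_g, \vec\chi\,|\vec X^m_\rho|\right)^{h}
\qquad \forall\ \vec\chi \in \Vh\,, \\
& \left(g^\frac32(\vec X^m)\,\vec\kappa_g, \vec\eta\,|\vec X^m_\rho|\right)^{h}
+ \left( g^\frac12(\vec X^{m})\,\vec X_\rho,\vec\eta_\rho\,|\vec X^m_\rho|^{-1}\right)^{h}
= 0 \qquad \forall\ \vec\eta \in \Vh
\end{align*}
— has only the trivial solution $(\vec X,\vec\kappa_g)=(\vec 0,\vec 0)$.

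First I would test the first equation with $\vec\chi = \vec\pi^h[g^\frac12(\vec X^m)\,\vec\kappa_g]\in\Vh$ and the second with $\vec\eta = \vec X\in\Vh$. Since $g^\frac12(\vec X^m)\,\vec\kappa_g$ is continuous and the mass-lumped inner product only samples its arguments at the nodes $\{q_j\}$, the operator $\vec\pi^h$ is invisible under $(\cdot,\cdot)^h$, so the first identity reduces to $\ttau_m^{-1}\bigl(g^\frac32(\vec X^m)\,\vec X,\vec\kappa_g\,|\vec X^m_\rho|\bigr)^h = \bigl(g^\frac32(\vec X^m)\,|\vec\kappa_g|^2,|\vec X^m_\rho|\bigr)^h$. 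Feeding this into the second identity yields
\[
\ttau_m\,\bigl(g^\frac32(\vec X^m)\,|\vec\kappa_g|^2,|\vec X^m_\rho|\bigr)^h
+ \bigl(g^\frac12(\vec X^m)\,|\vec X_\rho|^2,|\vec X^m_\rho|^{-1}\bigr)^h = 0\,.
\]
Because $\ttau_m>0$, $g>0$ on $H$, and $|\vec X^m_\rho|$ is piecewise constant and positive almost everywhere — hence strictly positive on each $I_j$ — both mass-lumped terms are sums of nonnegative contributions and therefore vanish. Vanishing of the second term forces $\vec X_\rho\equiv\vec 0$, i.e.\ $\vec X\equiv\vec X^c$ for some $\vec X^c\in\bR^2$, and vanishing of the first forces $\vec\kappa_g\equiv\vec 0$.

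It remains to show $\vec X^c=\vec 0$; this is the only place where a hypothesis beyond positivity of $g$ is used. I would return to the (now simplified) first equation with $\vec\kappa_g\equiv\vec 0$ and test with the constant function $\vec\chi=\vec X^c$, obtaining $\ttau_m^{-1}\,\bigl(g(\vec X^m),|\vec X^m_\rho|\bigr)^h\,|\vec X^c|^2 = 0$, whence $\vec X^c=\vec 0$ since $\bigl(g(\vec X^m),|\vec X^m_\rho|\bigr)^h>0$ whenever $|\vec X^m_\rho|>0$ a.e. Note that here the test space for $\vec\chi$ is all of $\Vh$, so — in contrast to Lemma~\ref{lem:exnew} for $(\BGNmc_m)^h$, where one only obtains $\vec X^c\perp\mathcal Z^h$ and hence needs $(\mathfrak A)^h$ — the mild assumption $|\vec X^m_\rho|>0$ suffices, as claimed. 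The calculation itself is routine; the only things needing care are the bookkeeping of the mass-lumped products and identifying precisely which positivity is required at the last step.
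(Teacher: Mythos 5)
Your proposal is correct and follows essentially the same route as the paper's proof: the same test functions $\vec\chi=\vec\pi^h[g^{\frac12}(\vec X^m)\,\vec\kappa_g]$ and $\vec\eta=\vec X$ give the energy identity, which forces $\vec\kappa_g=\vec 0$ and $\vec X$ constant, and the first equation then kills the constant. Your extra remarks (the interpolation operator being invisible under mass lumping, and testing with $\vec\chi=\vec X^c$ to conclude $\vec X^c=\vec 0$ using only $|\vec X^m_\rho|>0$ a.e.\ and $g>0$) simply make explicit the steps the paper leaves implicit.
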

\begin{proof}
As (\ref{eq:fddziuknewa},b) is linear, existence follows from uniqueness. 
To investigate the latter, we consider the system: 
Find $(\vec X,\vec\kappa_g) \in \Vh \times \Vh$ such that
\begin{subequations}
\begin{align}
&
\left(g(\vec X^m)\,\frac{\vec X}{\ttau_m},
\vec\chi\,|\vec X^m_\rho|\right)^{h}
= \left(g(\vec X^m)\,\vec\kappa_g,
\vec\chi\,|\vec X^m_\rho|\right)^{h} \qquad \forall\ \vec\chi \in 
\Vh\,, \label{eq:proofdziuka} \\
& \left(g^\frac32(\vec X^m)\,\vec\kappa_g,
\vec\eta\,|\vec X^m_\rho|\right)^{h}
+ \left(g^\frac12(\vec X^m)\,
\vec X_\rho, \vec\eta_\rho\,|\vec X^m_\rho|^{-1}\right) 
= 0
\qquad \forall\ \vec\eta \in \Vh\,.
\label{eq:proofdziukb}
\end{align}
\end{subequations}
Choosing $\vec\chi = \vec\pi^h[g^\frac12(\vec X^m)\,\vec\kappa_g \in \Vh$ in
(\ref{eq:proofdziuka}) and  
$\vec\eta= \vec X \in \Vh$ in (\ref{eq:proofdziukb}) yields that
\begin{equation} \label{eq:uniquedziuk0}
\left( g^\frac12(\vec X^m) \,
|\vec X_\rho|^2, |\vec X^m_\rho|^{-1}\right)
+ \ttau_m \left(g^\frac32(\vec X^m)\,|\vec\kappa_g|^2, 
|\vec X^m_\rho| \right)^{h} = 0
\end{equation}
It immediately follows from (\ref{eq:uniquedziuk0}) that
$\vec\kappa_g = \vec0$, and that $\vec X = \vec X^c \in \bR^2$.
Combined with (\ref{eq:proofdziuka}) these imply that
$\vec X^c = \vec 0$.
Hence we have shown that $(\GDmc_m)^{h}$ has a unique solution
$(\vec X^{m+1},\vec\kappa_g^{m+1}) \in \Vh \times \Vh$.
\end{proof}

On recalling (\ref{eq:Lg}), for $\vec Z \in \Vh$ we let
\begin{equation} \label{eq:Lgh}
L_g^{h}(\vec Z) = \left( g^\frac12(\vec Z),|\vec Z_\rho| \right)^{h} .
\end{equation}
We now prove discrete analogues of (\ref{eq:gL2gradflow}) and 
(\ref{eq:gL2gradflow2}) for the schemes  $(\BGNmc_{m,\star})^{h}$ and
$(\GDmc_{m,\star})^{h}$, respectively.

\begin{thm} \label{thm:stab}
Let $(\vec X^{m+1},\kappa_g^{m+1})$ be a solution to 
$(\BGNmc_{m,\star})^{h}$, 
or let \linebreak 
$(\vec X^{m+1},\vec\kappa_g^{m+1})$ be a solution to 
$(\GDmc_{m,\star})^{h}$. 
Then it holds that
\begin{align}
L_g^{h}(\vec X^{m+1}) + \ttau_m
\begin{cases}
\left(g^\frac12(\vec X^m)\,|\kappa_g^{m+1}|^2,
|\vec X^m_\rho|\right)^{h} \\
\left(g^\frac32(\vec X^m)\,|\vec\kappa_g^{m+1}|^2,
|\vec X^m_\rho|\right)^{h}
\end{cases}
 \leq L_g^{h}(\vec X^m)\,,
\label{eq:stab}
\end{align}
respectively. 
\end{thm}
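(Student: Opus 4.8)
The plan is to test each scheme with the discrete velocity $\vec\eta=\vec X^{m+1}-\vec X^m$ in the side constraint and with an appropriate multiple of the discrete curvature in the normal-velocity equation, and then to combine the two resulting identities using the convex/concave inequality (\ref{eq:gsplitstab}) together with an elementary estimate controlling the discrete length.

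For $(\BGNmc_{m,\star})^{h}$, i.e.\ (\ref{eq:fdnonlinearb})--(\ref{eq:fdnewa}), I would first put $\chi=\kappa_g^{m+1}\in V^h$ into (\ref{eq:fdnewa}) to get
\[
\bigl(g(\vec X^m)\,\kappa_g^{m+1}\,\vec\nu^m,(\vec X^{m+1}-\vec X^m)\,|\vec X^m_\rho|\bigr)^h
=\ttau_m\,\bigl(g^\frac12(\vec X^m)\,|\kappa_g^{m+1}|^2,|\vec X^m_\rho|\bigr)^h,
\]
and then test (\ref{eq:fdnonlinearb}) with $\vec\eta=\vec X^{m+1}-\vec X^m\in\Vh$: its first term is exactly the left-hand side above, so
\[
\ttau_m\,\bigl(g^\frac12(\vec X^m)\,|\kappa_g^{m+1}|^2,|\vec X^m_\rho|\bigr)^h
+\bigl(\nabla[g^\frac12_+(\vec X^{m+1})+g^\frac12_-(\vec X^m)],(\vec X^{m+1}-\vec X^m)\,|\vec X^{m+1}_\rho|\bigr)^h
+\bigl(g^\frac12(\vec X^m)\,\vec X^{m+1}_\rho,(\vec X^{m+1}_\rho-\vec X^m_\rho)\,|\vec X^m_\rho|^{-1}\bigr)^h=0.
\]
The second term I would bound from below by $\bigl(g^\frac12(\vec X^{m+1})-g^\frac12(\vec X^m),|\vec X^{m+1}_\rho|\bigr)^h$ by applying (\ref{eq:gsplitstab}) nodewise with $\vec u=\vec X^{m+1}(q_j)$, $\vec v=\vec X^m(q_j)$; since $|\vec X^{m+1}_\rho|\geq0$ this nodal inequality survives the (nonnegatively weighted) mass-lumped product. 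For the third term I would use, on each interval $I_j$ (where $\vec X^{m+1}_\rho,\vec X^m_\rho$ are constant), the elementary chain $\vec a\,.\,(\vec a-\vec b)\geq\tfrac12(|\vec a|^2-|\vec b|^2)$ and $|\vec a|^2/|\vec b|\geq2|\vec a|-|\vec b|$ for $|\vec b|>0$, which gives $\vec X^{m+1}_\rho\,.\,(\vec X^{m+1}_\rho-\vec X^m_\rho)\,|\vec X^m_\rho|^{-1}\geq|\vec X^{m+1}_\rho|-|\vec X^m_\rho|$; multiplying by $g^\frac12(\vec X^m)\geq0$ and applying $(\cdot,\cdot)^h$ bounds the third term below by $\bigl(g^\frac12(\vec X^m),|\vec X^{m+1}_\rho|-|\vec X^m_\rho|\bigr)^h$. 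Adding these two lower bounds, the cross terms telescope and the sum equals $L_g^h(\vec X^{m+1})-L_g^h(\vec X^m)$, recall (\ref{eq:Lgh}); inserting this into the identity above yields the first case of (\ref{eq:stab}).

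For $(\GDmc_{m,\star})^{h}$, i.e.\ (\ref{eq:fddziuknewa})--(\ref{eq:dziukfdnonlinearb}), the argument is the same: I would take $\vec\chi=\vec\pi^h[g^\frac12(\vec X^m)\,\vec\kappa_g^{m+1}]\in\Vh$ in (\ref{eq:fddziuknewa}) --- legitimate because the mass-lumped product only sees nodal values, so the interpolation changes nothing --- to obtain $\bigl(g^\frac32(\vec X^m)\,\vec\kappa_g^{m+1},(\vec X^{m+1}-\vec X^m)\,|\vec X^m_\rho|\bigr)^h=\ttau_m\,\bigl(g^\frac32(\vec X^m)\,|\vec\kappa_g^{m+1}|^2,|\vec X^m_\rho|\bigr)^h$, then test (\ref{eq:dziukfdnonlinearb}) with $\vec\eta=\vec X^{m+1}-\vec X^m$ and estimate its last two terms exactly as above, which gives the second case of (\ref{eq:stab}). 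The only slightly delicate points are the verification that the nodal inequalities survive the mass lumping (they do, since the surviving factors $|\vec X^{m+1}_\rho|$, $g^\frac12(\vec X^m)$, $|\vec X^m_\rho|^{-1}$ are nonnegative and the lumped quadrature weights $\tfrac12 h_j$ are positive) and the use of the interpolated test function in the $(\GDmc_{m,\star})^{h}$ case; I expect no genuine obstacle beyond this bookkeeping, since the convexity built into the splitting (\ref{eq:gsplit})--(\ref{eq:gsplitstab}) is precisely what makes the semi-implicit time discretization dissipative.
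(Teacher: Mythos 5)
Your proposal is correct and follows essentially the same route as the paper's proof: the same test functions ($\chi=\kappa_g^{m+1}$ in (\ref{eq:fdnewa}), resp.\ $\vec\chi=\vec\pi^h[g^\frac12(\vec X^m)\,\vec\kappa_g^{m+1}]$ in (\ref{eq:fddziuknewa}), and $\vec\eta=\vec X^{m+1}-\vec X^m$ in the side constraint), the convexity estimate (\ref{eq:gsplitstab}) applied nodewise, and a lower bound on the term $\bigl(g^\frac12(\vec X^m)\,\vec X^{m+1}_\rho,(\vec X^{m+1}_\rho-\vec X^m_\rho)\,|\vec X^m_\rho|^{-1}\bigr)^h$, which the paper obtains directly from $\vec a\,.\,(\vec a-\vec b)\geq|\vec b|\,(|\vec a|-|\vec b|)$ while you derive the same elementwise bound via an equivalent two-step elementary chain. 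The additional remarks on mass lumping and on replacing the test function by its nodal interpolant are just the bookkeeping the paper leaves implicit, so no gap remains.
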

\begin{proof}
Choosing $\chi = \ttau_m\,\kappa_g^{m+1}$ in 
(\ref{eq:fdnewa}) and
$\vec\eta = \vec X^{m+1} - \vec X^m$ in (\ref{eq:fdnonlinearb}) 
yields that
\begin{align}
& - \ttau_m \left(g^\frac12(\vec X^m)\,|\kappa_g^{m+1}|^2,
|\vec X^m_\rho|\right)^{h}
\nonumber \\ & \hspace{4mm}
= \left( \nabla\,[g^\frac12_+(\vec X^{m+1}) + g^\frac12_-(\vec X^{m})],
(\vec X^{m+1} - \vec X^m) \, |\vec X^{m+1}_\rho| \right)^{h}
\nonumber \\ & \hspace{4mm}\qquad
+ \left( g^\frac12(\vec X^{m})\,
\vec X^{m+1}_\rho,(\vec X^{m+1}_\rho - \vec X^m_\rho)\, 
|\vec X^m_\rho|^{-1} \right)^{h}
\nonumber \\ & \hspace{4mm}
\geq \left( g^\frac12(\vec X^{m+1}) - g^\frac12(\vec X^{m}), 
|\vec X^{m+1}_\rho| \right)^{h}
+ \left( g^\frac12(\vec X^{m}), |\vec X^{m+1}_\rho| - |\vec X^m_\rho|
\right)^{h}
\nonumber \\ & \hspace{4mm}
= \left( g^\frac12(\vec X^{m+1})\,|\vec X^{m+1}_\rho| - 
g^\frac12(\vec X^{m})\,|\vec X^m_\rho|, 1 \right)^{h}
= L_g^{h}(\vec X^{m+1}) - L_g^{h}(\vec X^{m})\,,
\label{eq:stab1}
\end{align}
where we have used (\ref{eq:gsplitstab}) and the inequality
$\vec a\,.\,(\vec a - \vec b) \geq |\vec b|\,(|\vec a| - |\vec b|)$
for $\vec a$, $\vec b \in \bR^2$.
This proves the desired result 
(\ref{eq:stab}) for $(\BGNmc_{m,\star})^{h}$. The proof for
$(\GDmc_{m,\star})^{h}$ is analogous.
\end{proof}

\begin{rem} \label{rem:oneliner}
We observe that in most of the above fully discrete schemes it is possible to
eliminate the discrete curvatures, $\kappa_g^{m+1}$ or
$\vec\kappa_g^{m+1}$, to derive discrete analogues of (\ref{eq:gL2flowbgn}) 
and (\ref{eq:gL2flow}), respectively. 
To this end,
let $\vec\omega^m \in \Vh$ be the mass-lumped 
$L^2$--projection of $\vec\nu^m$ onto $\underline V^h$, i.e.\
\begin{equation} \label{eq:omegam}
\left(\vec\omega^m, \vec\varphi \, |\vec X^m_\rho| \right)^h 
= \left( \vec\nu^m, \vec\varphi \, |\vec X^m_\rho| \right)
= \left( \vec\nu^m, \vec\varphi \, |\vec X^m_\rho| \right)^h
\qquad \forall\ \vec\varphi\in\underline V^h\,.
\end{equation}
Then, on recalling {\rm (\ref{eq:omegam})} and
on choosing $\chi = \pi^h[g^\frac12(\vec X^m)\,\vec\eta\,.\,\vec\omega^m] 
\in \Vh$ in
{\rm (\ref{eq:fdnewa})} for $\vec\eta \in \Vh$, the scheme
$(\BGNmc_{m,\star})^h$ reduces to: 
Find $\vec X^{m+1}\in \Vh$ such that
\begin{align}
& \left( g^\frac32(\vec X^m)\,
\frac{\vec X^{m+1} - \vec X^m}{\ttau_m}\,.\,\vec\omega^m, 
\vec\eta\,.\,\vec\omega^m\,|\vec X^m_\rho|\right)^{h}
+ \left( \nabla\,[g^\frac12_+(\vec X^{m+1}) + g^\frac12_-(\vec X^{m})],
\vec\eta \, |\vec X^{m+1}_\rho| \right)^{h}
\nonumber \\ & \hspace{4cm}
+ \left( g^\frac12(\vec X^{m})\,
\vec X^{m+1}_\rho,\vec\eta_\rho\, |\vec X^m_\rho|^{-1} \right)^{h}
= 0 \quad \forall\ \vec\eta \in \Vh\,.
\label{eq:fdnonlinear}
\end{align}
and similarly for $(\BGNmc_m)^h$, $(\GDmc_m)^{h}$ and
$(\GDmc_{m,\star})^{h}$. 
A related variant to {\rm (\ref{eq:fdnonlinear})} is given by:
Find $\vec X^{m+1}\in \Vh$ such that
\begin{align}
& \left( g^\frac32(\vec X^m)\,
\frac{\vec X^{m+1} - \vec X^m}{\ttau_m}\,.\,\vec\nu^m, 
\vec\eta\,.\,\vec\nu^m\,|\vec X^m_\rho|\right)^{h}
+ \left( \nabla\,[g^\frac12_+(\vec X^{m+1}) + g^\frac12_-(\vec X^{m})],
\vec\eta \, |\vec X^{m+1}_\rho| \right)^{h}
\nonumber \\ & \hspace{4cm}
+ \left( g^\frac12(\vec X^{m})\,
\vec X^{m+1}_\rho,\vec\eta_\rho\, |\vec X^m_\rho|^{-1} \right)^{h}
= 0 \quad \forall\ \vec\eta \in \Vh\,.
\label{eq:fdnonlinearnu}
\end{align}
Similarly to Theorem~\ref{thm:stab}, the scheme {\rm (\ref{eq:fdnonlinearnu})}
can also be shown to be unconditionally stable, i.e.\ a solution to
{\rm (\ref{eq:fdnonlinearnu})} satisfies
\begin{align}
L_g^{h}(\vec X^{m+1}) + \ttau_m
\left( g^\frac32(\vec X^m)\left|\frac{\vec X^{m+1} - \vec X^m}{\ttau_m}
\,.\,\vec\nu^m\right|^2, |\vec X^m_\rho|\right)^{h} 
\leq L_g^{h}(\vec X^m)\,.
\label{eq:stabnu}
\end{align}
\end{rem}

\begin{rem} \label{rem:gmu}
Note that in the case $\mu=-1$, the function
$g^\frac12_+(\vec z) = g^\frac12(\vec z) = \vec z\,.\,\vec\ek_2$ is linear, 
and $\nabla\,g^\frac12(\vec z) = \vec\ek_2$.
As a consequence, the numerical integration in the second and third terms in
{\rm (\ref{eq:fdnewb})}, {\rm (\ref{eq:fdnonlinearb})},
{\rm (\ref{eq:fddziuknewb})} and {\rm (\ref{eq:dziukfdnonlinearb})}
plays no role.
In fact, in this case the schemes 
$(\BGNmc_{m})^{h}$, $(\GDmc_{m})^{h}$ and 
$(\BGNmc_{m,\star})^{h}$, $(\GDmc_{m,\star})^{h}$, 
with {\rm (\ref{eq:gmusplit})},  
collapse to their namesakes in \cite{aximcf}, if we account for
the space-dependent weighting factor that
differentiates {\rm (\ref{eq:aximcf})} from {\rm (\ref{eq:Vgkg})}. 
\end{rem}

\begin{rem} \label{rem:Vhpartial}
Using the techniques from \cite{aximcf}, 
it is straightforward to adapt the presented schemes to deal with open curves,
with fixed endpoints. These schemes then allow to compute approximations to
geodesics in the hyperbolic plane, for example.
In particular, we replace $I=\RZ$ by $I=[0,1]$ and define
$\Vhpartial = \{ \vec\eta \in \Vh : \vec\eta(0) = \vec\eta(1) = \vec 0\}$.
Then in place of $(\BGNmckappa_m)^h$ we seek $(\vec X^{m+1}, \kappa^{m+1})
\in \Vh \times V^h$, with $\vec X^{m+1} - \vec X^m \in \Vhpartial$,
such that {\rm (\ref{eq:fda})} holds, as well as {\rm (\ref{eq:fdb})},
with $\Vh$ replaced by $\Vhpartial$. For later reference, we call this adapted
scheme $(\BGNmckappa_m^\partial)^h$.
\end{rem}

\subsection{Curve diffusion}

We consider the following fully discrete approximation of $(\BGNsd)$, i.e.\
(\ref{eq:sdweaka}), (\ref{eq:weak_varkappa}), where, in order to make 
the approximation more practical, we introduce an
auxiliary variable. \\ \noindent
$(\BGNsd_{m})^{h}$:
Let $\vec X^0 \in \Vh$. For $m=0,\ldots,M-1$, 
find $(\vec X^{m+1}, \kappa^{m+1}, \mathfrak{k}^{m+1}) \in \Vh \times V^h
\times V^h$ such that
(\ref{eq:fdb}) holds and
\begin{subequations}
\begin{align}
& \left(g(\vec X^m)\,
\frac{\vec X^{m+1} - \vec X^m}{\ttau_m}, \chi\,\vec\nu^m\,|\vec
X^m_\rho|\right)^{h}
= \left(g^{-\frac12}(\vec X^m)\left[ \mathfrak{k}^{m+1} - Z^m \right]_\rho, 
\chi_\rho\,|\vec X^m_\rho|^{-1}\right)^{h} \nonumber \\ & \hspace{9cm}
\quad \forall\ \chi \in V^h\,, \label{eq:sdfd3a} \\
& \left( g^{\frac12}(\vec X^m)\,
\mathfrak{k}^{m+1}, \zeta \,|\vec X^m_\rho| \right)^h
= \left( \kappa^{m+1}, \zeta \,|\vec X^m_\rho|\right)^h
\quad \forall\ \zeta \in V^h\,, \label{eq:sdfd3b}
\end{align}
where $Z^m \in V^h$ is such that
\begin{equation} \label{eq:Zm}
\left(g^{\frac12}(\vec X^m)\, Z^m, \xi \,|\vec X^m_\rho| \right)^h
= \tfrac12 \left( \vec\nu^m\,.\,\nabla\,\ln g(\vec X^m) , 
\xi \,|\vec X^m_\rho| \right)^h
\quad \forall\ \xi \in V^h\,.
\end{equation}
\end{subequations}
We note that it does not appear possible to prove the existence of a
unique solution to $(\BGNsd_m)^{h}$. 

We consider the following two fully discrete analogues of $(\BGNsdstab)$, i.e.\
(\ref{eq:sdstaba}), (\ref{eq:weak_gkgnu}). The first scheme will be linear,
while the second scheme will be nonlinear, and will admit a stability proof.
\\ \noindent
$(\BGNsdstab_m)^{h}$:
Let $\vec X^0 \in \Vh$. For $m=0,\ldots,M-1$, 
find $(\vec X^{m+1}, \kappa_g^{m+1}) \in \Vh \times V^h$ such that
(\ref{eq:fdnewb}) holds and 
\begin{align}
& \left(g(\vec X^m)\,
\frac{\vec X^{m+1} - \vec X^m}{\ttau_m}, \chi\,\vec\nu^m\,
|\vec X^m_\rho|\right)^{h}
= \left(g^{-\frac12}(\vec X^m)\,[\kappa_g^{m+1}]_\rho,
\chi_\rho\,|\vec X^m_\rho|^{-1} \right)^{h}
\quad \forall\ \chi \in V^h\,. \label{eq:fdsdsa}
\end{align}
\noindent
$(\BGNsdstab_{m,\star})^{h}$:
Let $\vec X^0 \in \Vh$. For $m=0,\ldots,M-1$, 
find $(\vec X^{m+1}, \kappa_g^{m+1}) \in \Vh \times V^h$ such that
(\ref{eq:fdnonlinearb}) and (\ref{eq:fdsdsa}) hold.

We remark that the schemes $(\BGNsd_m)^h$, $(\BGNsdstab_m)^h$ and
$(\BGNsdstab_{m,\star})^{h}$, with (\ref{eq:gmusplit}),  
in the case (\ref{eq:geuclid}), collapse to
the scheme \citet[(2.2a,b)]{triplej} for Euclidean curve/surface diffusion.

\begin{lem} \label{lem:exsd}
Let the assumption $(\mathfrak A)^{h}$ hold.
Then there exists a unique solution
$(\vec X^{m+1},$ $\kappa_g^{m+1}) \in \Vh \times V^h$ 
to $(\BGNsdstab_m)^{h}$.
\end{lem}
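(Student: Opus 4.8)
The plan is to mirror the existence–uniqueness arguments already carried out for $(\BGNmckappa_m)^h$, $(\BGNmc_m)^h$ and $(\GDmc_m)^h$ in Lemmas~\ref{lem:ex}, \ref{lem:exnew} and \ref{lem:exdziuk}. The scheme $(\BGNsdstab_m)^h$ is the coupled system (\ref{eq:fdnewb})--(\ref{eq:fdsdsa}), which is linear in the unknowns $(\vec X^{m+1},\kappa_g^{m+1})\in\Vh\times V^h$, and the number of scalar test equations (namely $\dim\Vh$ from (\ref{eq:fdnewb}) plus $\dim V^h$ from (\ref{eq:fdsdsa})) equals the number of scalar degrees of freedom. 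Hence existence follows once uniqueness is shown, and I would pass to the homogeneous system: find $(\vec X,\kappa_g)\in\Vh\times V^h$ with $\left(g(\vec X^m)\,\tfrac{\vec X}{\ttau_m},\chi\,\vec\nu^m\,|\vec X^m_\rho|\right)^h = \left(g^{-\frac12}(\vec X^m)\,[\kappa_g]_\rho,\chi_\rho\,|\vec X^m_\rho|^{-1}\right)^h$ for all $\chi\in V^h$, together with $\left(g(\vec X^m)\,\kappa_g\,\vec\nu^m,\vec\eta\,|\vec X^m_\rho|\right)^h + \left(g^\frac12(\vec X^m)\,\vec X_\rho,\vec\eta_\rho\,|\vec X^m_\rho|^{-1}\right)^h = 0$ for all $\vec\eta\in\Vh$ (the data term $\left(\nabla\,g^\frac12(\vec X^m),\vec\eta\,|\vec X^m_\rho|\right)^h$ from (\ref{eq:fdnewb}) being dropped).

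The central step is an energy test: choose $\chi=\kappa_g$ in the homogeneous form of (\ref{eq:fdsdsa}) and $\vec\eta=\vec X$ in the homogeneous form of (\ref{eq:fdnewb}). The first equation identifies the cross term $\left(g(\vec X^m)\,\kappa_g\,\vec\nu^m,\vec X\,|\vec X^m_\rho|\right)^h$ with $\ttau_m\left(g^{-\frac12}(\vec X^m)\,|[\kappa_g]_\rho|^2,|\vec X^m_\rho|^{-1}\right)^h$, and substituting into the second gives
\[
\left(g^\frac12(\vec X^m)\,|\vec X_\rho|^2,|\vec X^m_\rho|^{-1}\right)^h
+ \ttau_m\left(g^{-\frac12}(\vec X^m)\,|[\kappa_g]_\rho|^2,|\vec X^m_\rho|^{-1}\right)^h = 0.
\]
Since $g>0$ on $H$, $|\vec X^m_\rho|>0$ a.e.\ and $\ttau_m>0$, and since the mass-lumped product (\ref{eq:ip0}) of a pointwise non-negative integrand is non-negative, both terms vanish; this forces $\vec X_\rho\equiv\vec 0$ and $[\kappa_g]_\rho\equiv\vec 0$ on every element, hence $\vec X\equiv\vec X^c\in\bR^2$ and $\kappa_g\equiv\kappa_g^c\in\bR$ are constant. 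Feeding $[\kappa_g]_\rho=\vec 0$ back into the homogeneous (\ref{eq:fdsdsa}) gives $\vec X^c\,.\,\vec z=0$ for all $\vec z\in\mathcal Z^h$, so assumption $(\mathfrak A)^h$ yields $\vec X^c=\vec 0$; and then, with $\vec X_\rho=\vec 0$, the homogeneous (\ref{eq:fdnewb}) reduces to $\kappa_g^c\left(g(\vec X^m)\,\vec\nu^m,\vec\eta\,|\vec X^m_\rho|\right)^h=0$ for all $\vec\eta\in\Vh$; taking $\vec\eta=\chi\,\vec\ek_i$, $i=1,2$, and using that $\mathcal Z^h$ contains a non-zero vector (again by $(\mathfrak A)^h$) gives $\kappa_g^c=0$. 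Thus the homogeneous system has only the trivial solution, which establishes uniqueness and hence the lemma.

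I do not anticipate a real obstacle here; the argument is a routine adaptation of the earlier existence lemmas. The only points that need a little care are: (i) confirming that the quadratic forms produced by the energy test stay non-negative under mass lumping, so that they can only vanish when their arguments are elementwise constant — this uses the sign of the integrands together with $g>0$ and $|\vec X^m_\rho|>0$; and (ii) invoking $(\mathfrak A)^h$ twice, once to kill the translational constant $\vec X^c$ and once to kill the curvature constant $\kappa_g^c$. Tracking which bilinear forms use the lumped product $(\cdot,\cdot)^h$ and which the exact product $(\cdot,\cdot)$ is a bookkeeping matter only and does not affect the positivity estimates.
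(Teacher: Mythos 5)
Your argument is correct and follows essentially the same route as the paper: linearity reduces existence to uniqueness, the homogeneous system is tested with $\chi=\kappa_g$ and $\vec\eta=\vec X$ to obtain exactly the identity (\ref{eq:uniquesd0}), forcing $\vec X$ and $\kappa_g$ to be constants, after which $(\mathfrak A)^h$ eliminates the constant $\vec X^c$ via the first equation and the existence of a nonzero element of $\mathcal Z^h$ eliminates the constant $\kappa_g^c$ via the second. No gaps; this matches the paper's proof of Lemma~\ref{lem:exsd}.
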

\begin{proof}
As (\ref{eq:fdsdsa}), (\ref{eq:fdnewb}) is linear, 
existence follows from uniqueness. 
To investigate the latter, we consider the system: 
Find $(\vec X, \kappa_g) \in \Vh\times V^h$ such that
\begin{subequations}
\begin{align}
&
\left(g(\vec X^m)\,\frac{\vec X}{\ttau_m}, 
\chi\,\vec\nu^m\,|\vec X^m_\rho|\right)^{h}
= \left(g^{-\frac12}(\vec X^m)\,[\kappa_g]_\rho,
\chi_\rho\,|\vec X^m_\rho|^{-1}\right)^{h}
\qquad \forall\ \chi \in V^h\,,
\label{eq:proofsda} \\
& \left(g(\vec X^m)\,\kappa_g\,\vec\nu^m,
\vec\eta\,|\vec X^m_\rho|\right)^{h}
+ \left(g^\frac12(\vec X^m)\,
\vec X_\rho, \vec\eta_\rho\,|\vec X^m_\rho|^{-1}\right)^{h}
= 0 \qquad \forall\ \vec\eta \in \Vh\,.
\label{eq:proofsdb}
\end{align}
\end{subequations}
Choosing $\chi = \kappa_g \in V^h$ in (\ref{eq:proofsda}) and 
$\vec\eta= \vec X \in \Vh$ in (\ref{eq:proofsdb}) yields that
\begin{equation} \label{eq:uniquesd0}
\left( g^\frac12(\vec X^m) \,
|\vec X_\rho|^2, |\vec X^m_\rho|^{-1}\right)^{h}
+ \ttau_m \left(g^{-\frac12}(\vec X^m) \,
|[\kappa_g]_\rho|^2, |\vec X^m_\rho|^{-1} \right)^{h} = 0\,.
\end{equation}
It follows from (\ref{eq:uniquesd0})
that $\kappa_g = \kappa^c \in \bR$ and 
$\vec X \equiv \vec X^c\in\bR^2$.
Hence it follows from (\ref{eq:proofsda}) that
$\vec X^c\,.\,\vec z = 0$ for all $\vec z \in \mathcal{Z}^{h}$,
and so assumption $(\mathfrak A)^{h}$ yields that $\vec X^c = \vec 0$.
Similarly, it follows from (\ref{eq:proofsdb}) and the fact that 
$\mathcal{Z}^{h}$ must contain a nonzero vector that $\kappa^c=0$.
Hence we have shown that $(\BGNsdstab_m)^{h}$ has a unique solution
$(\vec X^{m+1},\kappa_g^{m+1}) \in \Vh \times V^h$.
\end{proof}

We now prove a discrete analogue of (\ref{eq:gH-1gradflow}), recall also
(\ref{eq:Fstab}), for the scheme $(\BGNsdstab_{m,\star})^{h}$.

\begin{thm} \label{thm:stabsd}
Let $(\vec X^{m+1},\kappa_g^{m+1})$ be a solution to 
$(\BGNsdstab_{m,\star})^{h}$. 
Then it holds that
\begin{align}
L_g^{h}(\vec X^{m+1}) + \ttau_m
\left(g^{-\frac12}(\vec X^m)\,|[\kappa_g]_\rho|^2, |\vec X^m_\rho|^{-1}
\right)^{h}
 \leq L_g^{h}(\vec X^m)\,.
\label{eq:stabsd}
\end{align}
\end{thm}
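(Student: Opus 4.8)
The plan is to follow the proof of Theorem~\ref{thm:stab} essentially verbatim, because the scheme $(\BGNsdstab_{m,\star})^h$ differs from $(\BGNmc_{m,\star})^h$ only in the right-hand side of the normal velocity equation: the side constraint (\ref{eq:fdnonlinearb}) is unchanged, while the $L^2$--type equation (\ref{eq:fdnewa}) is replaced by the $H^{-1}$--type equation (\ref{eq:fdsdsa}). Consequently the only genuinely new point is to produce the curve-diffusion dissipation term $\bigl(g^{-\frac12}(\vec X^m)\,|[\kappa_g^{m+1}]_\rho|^2,|\vec X^m_\rho|^{-1}\bigr)^h$ with the correct sign; once that is done, the energy bookkeeping is identical to that in (\ref{eq:stab1}).

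Concretely, I would first test (\ref{eq:fdsdsa}) with the admissible choice $\chi = \ttau_m\,\kappa_g^{m+1}\in V^h$, which yields
\[
\bigl(g(\vec X^m)\,(\vec X^{m+1}-\vec X^m),\kappa_g^{m+1}\,\vec\nu^m\,|\vec X^m_\rho|\bigr)^h
= \ttau_m\,\bigl(g^{-\frac12}(\vec X^m)\,|[\kappa_g^{m+1}]_\rho|^2,|\vec X^m_\rho|^{-1}\bigr)^h .
\]
Next I would test (\ref{eq:fdnonlinearb}) with $\vec\eta = \vec X^{m+1}-\vec X^m\in\Vh$. By symmetry of the mass-lumped inner product, the first term of the resulting equation coincides with the left-hand side of the displayed identity, so substitution expresses $\ttau_m\,\bigl(g^{-\frac12}(\vec X^m)\,|[\kappa_g^{m+1}]_\rho|^2,|\vec X^m_\rho|^{-1}\bigr)^h$ as minus the sum of the two energy terms $\bigl(\nabla[g^\frac12_+(\vec X^{m+1})+g^\frac12_-(\vec X^m)],(\vec X^{m+1}-\vec X^m)\,|\vec X^{m+1}_\rho|\bigr)^h$ and $\bigl(g^\frac12(\vec X^m)\,\vec X^{m+1}_\rho,(\vec X^{m+1}_\rho-\vec X^m_\rho)\,|\vec X^m_\rho|^{-1}\bigr)^h$ — precisely the two terms bounded from below in (\ref{eq:stab1}).

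To close the argument I would apply those same two lower bounds node by node, so that they survive the passage to $(\cdot,\cdot)^h$, which is a nonnegative combination of point evaluations: the convex/concave splitting inequality (\ref{eq:gsplitstab}) with $\vec u=\vec X^{m+1}$, $\vec v=\vec X^m$, and the elementary inequality $\vec a\,.\,(\vec a-\vec b)\ge|\vec b|\,(|\vec a|-|\vec b|)$ with $\vec a=\vec X^{m+1}_\rho$, $\vec b=\vec X^m_\rho$. These two lower bounds add up to $L_g^h(\vec X^{m+1})-L_g^h(\vec X^m)$ on recalling (\ref{eq:Lgh}), hence $\ttau_m\,\bigl(g^{-\frac12}(\vec X^m)\,|[\kappa_g^{m+1}]_\rho|^2,|\vec X^m_\rho|^{-1}\bigr)^h \le L_g^h(\vec X^m)-L_g^h(\vec X^{m+1})$, which is (\ref{eq:stabsd}).

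I do not expect any real obstacle, as the proof is a near-copy of Theorem~\ref{thm:stab}; the only things requiring a moment's care are checking admissibility of the test functions $\chi=\ttau_m\,\kappa_g^{m+1}$ and $\vec\eta=\vec X^{m+1}-\vec X^m$ in their respective discrete spaces, and applying the two inequalities pointwise at the quadrature nodes before summing. Note also that, in contrast to Theorem~\ref{thm:stab}, no existence result for $(\BGNsdstab_{m,\star})^h$ is available, so the statement must be read conditionally on a solution existing; the stability estimate itself needs nothing beyond the two equations of the scheme.
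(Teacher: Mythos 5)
Your proposal is correct and follows the paper's own argument essentially verbatim: test (\ref{eq:fdsdsa}) with $\chi=\ttau_m\,\kappa_g^{m+1}$ and (\ref{eq:fdnonlinearb}) with $\vec\eta=\vec X^{m+1}-\vec X^m$, then bound the two energy terms exactly as in (\ref{eq:stab1}) via (\ref{eq:gsplitstab}) and $\vec a\,.\,(\vec a-\vec b)\ge|\vec b|\,(|\vec a|-|\vec b|)$. No gaps; the conditional reading with respect to existence matches the paper's statement as well.
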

\begin{proof}
Choosing $\chi = \ttau_m\,\kappa_g^{m+1}$ in 
(\ref{eq:fdsdsa}) and
$\vec\eta = \vec X^{m+1} - \vec X^m$ in (\ref{eq:fdnonlinearb}) we obtain,
similarly to (\ref{eq:stab1}), that
\begin{align}
& - \ttau_m \left(g^{-\frac12}(\vec X^m) \,
|[\kappa_g^{m+1}]_\rho|^2,
|\vec X^m_\rho|^{-1} \right)^{h} 
\nonumber \\ & \hspace{1cm}
= \left( \nabla\,[g^\frac12_+(\vec X^{m+1}) + g^\frac12_-(\vec X^{m})],
(\vec X^{m+1} - \vec X^m) \, |\vec X^{m+1}_\rho| \right)^{h}
\nonumber \\ & \hspace{1cm}\qquad
+ \left( g^\frac12(\vec X^{m})\,
\vec X^{m+1}_\rho,(\vec X^{m+1}_\rho - \vec X^m_\rho)\, 
|\vec X^m_\rho|^{-1} \right)^{h}
\leq L_g^{h}(\vec X^{m+1}) - L_g^{h}(\vec X^{m})\,.
\label{eq:stabsd1}
\end{align}
This proves the desired result (\ref{eq:stabsd}).
\end{proof}

\subsection{Elastic flow}

We consider the following fully discrete finite element approximation of
$(\BGNwf)$, i.e.\ (\ref{eq:sdwfa}) and (\ref{eq:weak_varkappa}),
similarly to the approximation $(\BGNsd_{m})^h$ for $(\BGNsd)$.
\\ \noindent
$(\BGNwf_{m})^{h}$:
Let $\vec X^0 \in \Vh$ and $\kappa^0 \in V^h$. For $m=0,\ldots,M-1$, 
find $(\vec X^{m+1}, \kappa^{m+1}, \mathfrak{k}^{m+1}) 
\in \Vh \times V^h \times V^h$ such that
(\ref{eq:fdb}), (\ref{eq:sdfd3b}) hold and
\begin{align}
& \left(g(\vec X^m)\,
\frac{\vec X^{m+1} - \vec X^m}{\ttau_m}, \chi\,\vec\nu^m\,|\vec
X^m_\rho|\right)^{h}
= \left( g^{-\frac12}(\vec X^m) \left[ \mathfrak{k}^{m+1} 
- Z^m \right]_\rho, \chi_\rho\,|\vec X^m_\rho|^{-1}\right)^{h} 
\nonumber \\ & \qquad
- \tfrac12 \left( g^{-1}(\vec X^m)
\left[ \kappa^m - 
\tfrac12\,\vec\nu^m\,.\,\nabla\,\ln g(\vec X^m) \right]^3, 
\chi\,|\vec X^m_\rho| \right)^{h}
\nonumber \\ & \qquad
- \left( S_0(\vec X^m)
\left[ \kappa^m - \tfrac12\,\vec\nu^m\,.\,\nabla\,\ln g(\vec X^m) 
\right], \chi\,|\vec X^m_\rho| \right)^{h}
\quad \forall\ \chi \in V^h\,, \label{eq:bgnfda}
\end{align}
where $Z^m \in V^h$ is defined by (\ref{eq:Zm}). 

We consider the following fully discrete finite element approximation of
$(\BGNwfwf)$, i.e.\ (\ref{eq:sdwfwfa}) and (\ref{eq:weak_gkgnu}). 
\\ \noindent
$(\BGNwfwf_m)^{h}$:
Let $\vec X^0 \in \Vh$ and $\kappa_g^0 \in V^h$. For $m=0,\ldots,M-1$, 
find $(\vec X^{m+1}, \kappa_g^{m+1}) \in \Vh \times V^h$ such that
(\ref{eq:fdnewb}) holds and
\begin{align}
& \left( g(\vec X^m)\,
\frac{\vec X^{m+1} - \vec X^m}{\ttau_m}, \chi\,\vec\nu^m\,
|\vec X^m_\rho|\right)^{h}
= \left( g^{-\frac12}(\vec X^m)\,[\kappa_g^{m+1}]_\rho,
\chi_\rho\,|\vec X^m_\rho|^{-1} \right)^{h}
\nonumber \\ & \
- \tfrac12 \left( g^\frac12(\vec X^m)\,
(\kappa_g^{m})^3, \chi\,|\vec X^m_\rho| \right)^{h}
- \left( S_0(\vec X^m)\,g^\frac12(\vec X^m)\,
\kappa_g^{m}, \chi\,|\vec X^m_\rho| \right)^{h}
\quad \forall\ \chi \in V^h\,. \label{eq:wfwfa}
\end{align}
Clearly, for the metric (\ref{eq:geuclid}) we have that $S_0=0$, and so 
the last terms in (\ref{eq:bgnfda}) and (\ref{eq:wfwfa}) vanish. 
In fact, in this case the schemes $(\BGNwf_{m})^h$ and $(\BGNwfwf_{m})^h$
collapse to the scheme \citet[(2.45a,b)]{triplej}, with $\lambda_m=0$, 
for Euclidean elastic flow.

\begin{rem} \label{rem:lambda}
It is often of interest to add a length penalization term to the energy {\rm
(\ref{eq:Wg})}, and hence consider the $L^2$--gradient flow of
\begin{equation} \label{eq:Wglambda}
W_g^\lambda(\vec x) = W_g(\vec x) + \lambda\,L_g(\vec x)\,,
\end{equation}
recall {\rm (\ref{eq:Lg})}, for some $\lambda \in \bRgeq$,
see e.g.\ \cite{DallAcquaS17preprint}. It is straightforward to
generalize our weak formulations and finite element approximations to this
case. For example, the scheme $(\BGNwfwf_m)^{h}$ is adapted by adding the
term $\lambda\,(g^\frac12(\vec X^m)\,\kappa_g^{m+1},
\chi\,|\vec X^m_\rho|)^{h}$ to the right hand side of {\rm (\ref{eq:wfwfa})},
and we call this new scheme $(\BGNwfwf_m^{\lambda})^{h}$ for later 
reference.
\end{rem}

\setcounter{equation}{0}
\section{Numerical results} \label{sec:nr}

We recall from (\ref{eq:Ag}) that
\begin{equation} \label{eq:Agmu}
A_g(\vec x) = \int_\Omega g(\vec z) \dz
= \int_I \vec\phi_g(\vec x)\,.\,\vec\nu\, |\vec x_\rho| \drho\,,
\quad\text{where}\quad
\nabla\,.\,\vec\phi_g = g \quad\text{in } H \,,
\end{equation}
if $\nu \circ \vec x^{-1}$ denotes the outer normal on 
$\partial\Omega = \Gamma = \vec x(I)$.
With this in mind, we define the following approximation of $A_g(\vec X^m)$,
\begin{equation} \label{eq:Agh}
A_g^h(\vec X^m) = 
\left(\vec\phi_g(\vec X^m)\,.\,\vec\nu^m , |\vec X^m_\rho| \right)^h .
\end{equation}
For the different metrics we consider, the function $\vec\phi_g$ can be chosen
as follows.
\begin{alignat*}{2} 
& \text{(\ref{eq:gmu})} && \quad
\vec\phi_g(\vec z) = 
\begin{cases}
(1 - 2\,\mu)^{-1}\,(\vec
z\,.\,\vec\ek_2)^{1 - 2\,\mu}\,\vec\ek_2 
& \mu \not= \frac12\,,\\ 
\ln (\vec z\,.\,\vec\ek_2)\,\vec\ek_2 & \mu = \frac12\,,
\end{cases} \nonumber \\
& \text{(\ref{eq:galpha})} && \quad
\vec\phi_g(\vec z) = 
\begin{cases}
2\,[\alpha\,|\vec z|^2\,(1 - \alpha\,|\vec z|^2)]^{-1}\,\vec z 
& \alpha \not= 0\,,\\ 
2\,\vec z & \alpha = 0\,,
\end{cases} \nonumber \\
& \text{(\ref{eq:gMercator})} && \quad
\vec\phi_g(\vec z) = \tanh (\vec z\,.\,\vec\ek_1)\,\vec\ek_1\,,\nonumber \\
& \text{(\ref{eq:gcatenoid})} && \quad
\vec\phi_g(\vec z) = \tfrac12\,( \vec z\,.\,\vec\ek_1 
+ \sinh (\vec z\,.\,\vec\ek_1)\, \cosh (\vec z\,.\,\vec\ek_1) ) \,\vec\ek_1\,,
\nonumber \\
& \text{(\ref{eq:gtorus})} && \quad
\vec\phi_g(\vec z) = \tfrac{2\,[\mathfrak s^2+1]^\frac12}{\mathfrak s}\,
\arctan\big(\tfrac{[\mathfrak s^2+1]^\frac12+1}{\mathfrak s}\,
\tan \tfrac{\vec z\,.\,\vec\ek_2}{2} \big)
+\frac{\sin \vec z\,.\,\vec\ek_2}
{[\mathfrak s^2+1]^\frac12 - \cos \vec z\,.\,\vec\ek_2}\,.
\end{alignat*}

For solutions of the scheme $(\BGNwf_m)^h$, we define
\begin{equation} \label{eq:Wgh}
W_g^m = \tfrac12\left( g^{-\frac12}(\vec X^m)\left[\kappa^m 
- \tfrac{1}{2} \, \vec\nu^m\,.\,\nabla\,
\ln g(\vec X^m) \right]^2, |\vec X^m_\rho| \right)^h
\end{equation}
as the natural discrete analogue of (\ref{eq:Wg}), while for solutions of the
scheme $(\BGNwfwf_m)^h$ we define
\begin{equation} \label{eq:tildeWgh}
\widetilde W_g^m =
\tfrac12\left( g^{\frac12}(\vec X^m)\left[\kappa^m_g \right]^2,
|\vec X^m_\rho| \right)^h .
\end{equation}
On recalling (\ref{eq:varkappa}), and given $\Gamma^0 = \vec X^0(\overline I)$,
we define the initial data $\kappa^0 \in V^h$ 
for the scheme $(\BGNwf_m)^h$ via
$\kappa^0 = \pi^h\left[\frac{\vec\kappa^0\,.\,\vec\omega^0}{|\vec\omega^0|}
\right]$, where we recall (\ref{eq:omegam}), and 
where $\vec\kappa^0\in \Vh$ is such that
\begin{equation*} 
\left( \vec\kappa^{0},\vec\eta\, |\vec X^0_\rho| \right)^h
+ \left( \vec{X}^{0}_\rho , \vec\eta_\rho\,|\vec X^0_\rho|^{-1} \right)
 = 0 \quad \forall\ \vec\eta \in \Vh\,.
\end{equation*}
With this definition of $\kappa^0$, we define the initial data
$\kappa_g^0 \in V^h$ for the scheme $(\BGNwfwf_m)^h$ via
$\kappa_g^0 = \pi^h\left[ g^{-\frac12}(\vec X^0)\left[\kappa^0 
- \tfrac{1}{2} \, \vec\omega^0\,.\,\nabla\, \ln g(\vec X^0)\right]\right]$.

We also consider the ratio
\begin{equation} \label{eq:ratio}
\ratio^m = \dfrac{\max_{j=1\to J} |\vec{X}^m(q_j) - \vec{X}^m(q_{j-1})|}
{\min_{j=1\to J} |\vec{X}^m(q_j) - \vec{X}^m(q_{j-1})|}
\end{equation}
between the longest and shortest element of $\Gamma^m$, and are often
interested in the evolution of this ratio over time.

\subsection{The hyperbolic plane, and (\ref{eq:gmu}) for $\mu \in \bR$}

Unless otherwise stated, all our computations in this section 
are for the hyperbolic plane,
i.e.\ (\ref{eq:ghypbol}) or, equivalently, (\ref{eq:gmu}) with $\mu=1$.

\subsubsection{Curvature flow}

{From} the Appendix~\ref{sec:A1} we recall the true solution 
(\ref{eq:app_ansatz}) with (\ref{eq:true_Vgkg}) 
for hyperbolic curvature flow, (\ref{eq:Vgkg}) in the case (\ref{eq:ghypbol}). 
We use this true solution for a convergence test for the various schemes for
curvature flow. Here
we start with a nonuniform partitioning of a circle of radius 
$r(0)=1$ centred at $a(0)\,\vec\ek_2$, where $a(0) = 2$.
In particular, we choose $\vec X^0 \in \Vh$ with
\begin{equation} \label{eq:X0}
\vec X^0(q_j) = a(0)\,\vec\ek_2 + 
r(0) \begin{pmatrix} 
\cos[2\,\pi\,q_j + 0.1\,\sin(2\,\pi\,q_j)] \\
\sin[2\,\pi\,q_j + 0.1\,\sin(2\,\pi\,q_j)]
\end{pmatrix}, \quad j = 1,\ldots,J\,,
\end{equation}
recall (\ref{eq:Jequi}). 
We compute the error
\begin{equation} \label{eq:errorXx}
\errorXx = \max_{m=1,\ldots,M} \max_{j=1,\ldots,J} | 
|\vec X^m(q_j) - a(t_m)\,\vec\ek_2| - r(t_m)|
\end{equation}
over the time interval $[0,0.1]$ between 
the true solution (\ref{eq:app_ansatz}) 
and the discrete solutions for the schemes
$(\BGNmckappa_m)^h$ and $(\GDmckappa_m)^h$. We note that the extinction time 
for (\ref{eq:true_Vgkg}) is $T_0 = -\frac12\,\ln\frac34 = 0.144$. 
Here we use the time step size $\ttau=0.1\,h^2_{\Gamma^0}$,
where $h_{\Gamma^0}$ is the maximal edge length of $\Gamma^0$.
The computed errors are reported in Table~\ref{tab:mcf1}.
The same errors for the schemes $(\BGNmc_m)^h$, $(\GDmc_m)^h$, 
$(\BGNmc_{m,\star})^h$ and $(\GDmc_{m,\star})^h$ can be seen in
Table~\ref{tab:mcf_schemesCD}. We observe that all schemes exhibit
second order convergence rates, with the smallest errors produced by
$(\BGNmckappa_m)^h$ and $(\BGNmc_{m,\star})^h$.
\begin{table}
\center
\begin{tabular}{|rr|c|c|c|c|}
\hline
& & \multicolumn{2}{c|}{$(\BGNmckappa_m)^h$}&
\multicolumn{2}{c|}{$(\GDmckappa_m)^h$} \\
$J$  & $h_{\Gamma^0}$ & $\errorXx$ & EOC & $\errorXx$ & EOC \\ \hline
32   & 2.1544e-01 & 2.7956e-02 & ---      & 4.7884e-02 & --- \\
64   & 1.0792e-01 & 7.6597e-03 & 1.872810 & 1.3493e-02 & 1.832236 \\         
128  & 5.3988e-02 & 1.9572e-03 & 1.969971 & 3.4819e-03 & 1.955728 \\
256  & 2.6997e-02 & 4.9196e-04 & 1.992498 & 8.7754e-04 & 1.988657 \\
512  & 1.3499e-02 & 1.2315e-04 & 1.998231 & 2.1982e-04 & 1.997249 \\
\hline
\end{tabular}
\caption{Errors for the convergence test for 
(\ref{eq:app_ansatz}) with (\ref{eq:true_Vgkg}),
with $r(0) = 1$, $a(0) = 2$, over the time interval $[0,0.1]$.}
\label{tab:mcf1}
\end{table}%
\begin{table}
\begin{tabular}{|r|c|c|c|c|c|c|}
\hline
 & $(\BGNmc_m)^h$ & $(\GDmc_m)^h$ & 
\multicolumn{2}{c|}{$(\BGNmc_{m,\star})^h$} &
\multicolumn{2}{c|}{$(\GDmc_{m,\star})^h$} \\ 
$J$ & $\errorXx$ & $\errorXx$ & $\errorXx$ & EOC & $\errorXx$ & EOC \\ \hline
32  & 3.4212e-02 & 5.2395e-02 & 2.7155e-02 & ---      & 4.9299e-02 & --- \\
64  & 9.3803e-03 & 1.4744e-02 & 7.5112e-03 & 1.855491 & 1.3918e-02 & 1.825973 \\
128 & 2.3970e-03 & 3.8047e-03 & 1.9237e-03 & 1.965475 & 3.5945e-03 & 1.953402 \\
256 & 6.0251e-04 & 9.5887e-04 & 4.8381e-04 & 1.991478 & 9.0613e-04 & 1.988107 \\
512 & 1.5082e-04 & 2.4019e-04 & 1.2112e-04 & 1.998003 & 2.2699e-04 & 1.997089 \\
\hline
\end{tabular}
\caption{Errors for the convergence test for 
(\ref{eq:app_ansatz}) with (\ref{eq:true_Vgkg}),
with $r(0) = 1$, $a(0) = 2$, over the time interval $[0,0.1]$.}
\label{tab:mcf_schemesCD}
\end{table}%

For the scheme $(\BGNmckappa_m)^h$
we show the evolution of a cigar shape in Figure~\ref{fig:mc_cigar}.
The discretization parameters are $J=128$ and $\ttau = 10^{-4}$.
Rotating the initial shape by $90^\circ$ degrees yields the evolution in
Figure~\ref{fig:mc_cigar2}. We note that in both cases the curve shrinks to a
point.
\begin{figure}
\center
\mbox{
\includegraphics[angle=-90,width=0.4\textwidth]{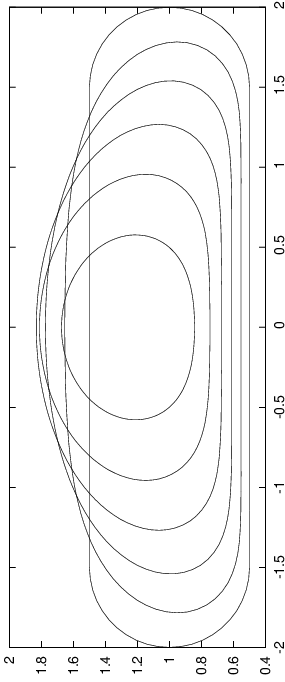}
\includegraphics[angle=-90,width=0.3\textwidth]{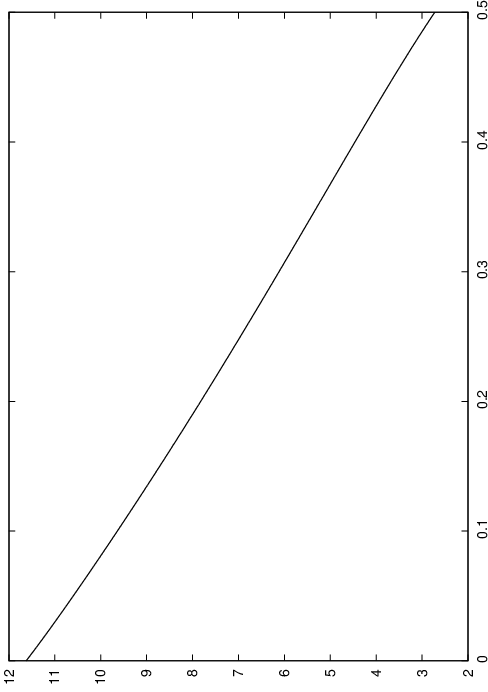}
\includegraphics[angle=-90,width=0.3\textwidth]{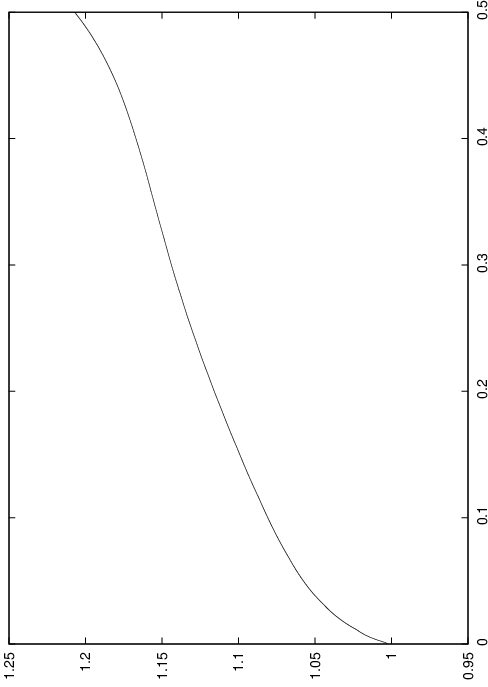}}
\caption{$(\BGNmckappa_m)^h$
Curvature flow towards extinction. 
Solution at times $t=0,0.1,\ldots,0.5$.
On the right are plots of the discrete energy $L_g^h(\vec X^m)$
and of the ratio (\ref{eq:ratio}).} 
\label{fig:mc_cigar}
\end{figure}%
\begin{figure}
\center
\begin{minipage}{0.3\textwidth}
\includegraphics[angle=-90,width=0.6\textwidth]{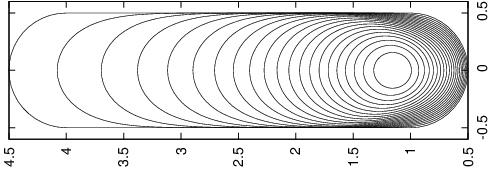}
\end{minipage} \qquad
\begin{minipage}{0.5\textwidth}
\includegraphics[angle=-90,width=0.7\textwidth]{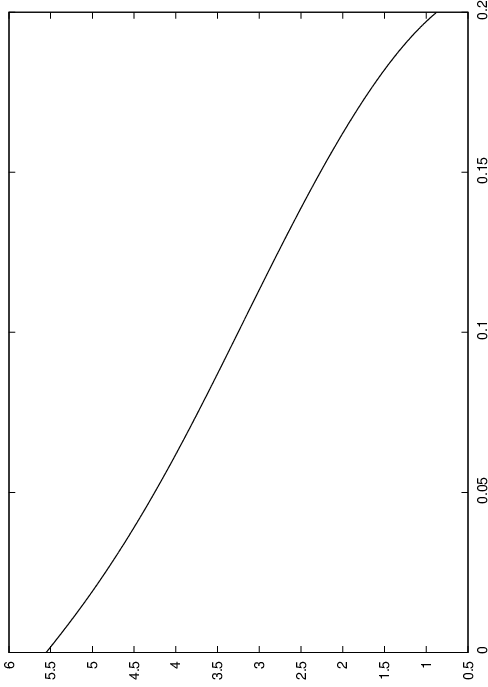}\\
\includegraphics[angle=-90,width=0.7\textwidth]{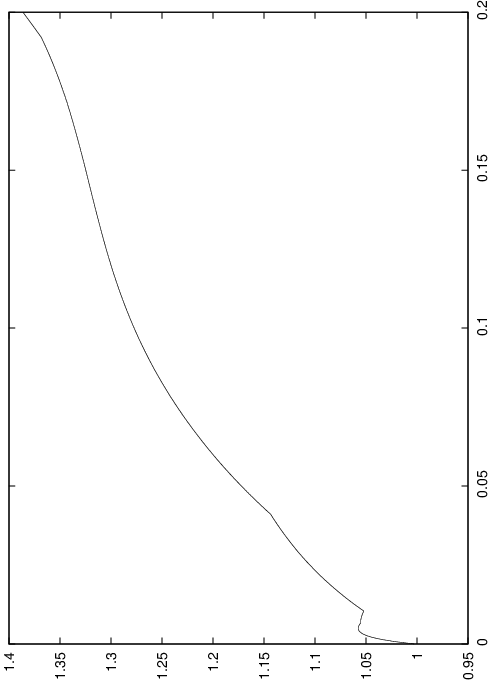}
\end{minipage}
\caption{
$(\BGNmckappa_m)^h$
Curvature flow towards extinction. 
Solution at times $t=0,0.01,\ldots,0.2$.
On the right are plots of the discrete energy $L_g^h(\vec X^m)$
and of the ratio (\ref{eq:ratio}).} 
\label{fig:mc_cigar2}
\end{figure}%
The same computations for the remaining schemes, i.e.\
$(\GDmckappa_m)^h$, $(\BGNmc_{m})^h$, $(\GDmc_{m})^h$, 
$(\BGNmc_{m,\star})^h$ and $(\GDmc_{m,\star})^h$, yield very similar results,
with the main difference being the evolution of the ratio 
(\ref{eq:ratio}). 
For the simulation in Figure~\ref{fig:mc_cigar}, 
we present the plots of this quantity for these alternative schemes
in Figure~\ref{fig:mc_cigar_r}, where we
observe that the obtained curves are far from being equidistributed.
In particular, the ratio for the schemes $(\GDmckappa_m)^h$, $(\GDmc_{m})^h$ 
$(\GDmc_{m,\star})^h$ reaches almost $60$, while it remains bounded below $3$
for the schemes $(\BGNmc_{m})^h$ and $(\BGNmc_{m,\star})^h$. This compares with
a final ratio of about $1.2$ in Figure~\ref{fig:mc_cigar}.
\begin{figure}
\center
\includegraphics[angle=-90,width=0.3\textwidth]{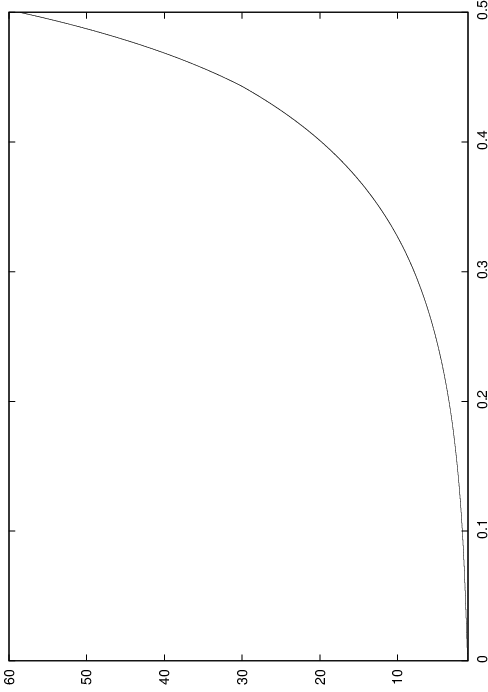}
\includegraphics[angle=-90,width=0.3\textwidth]{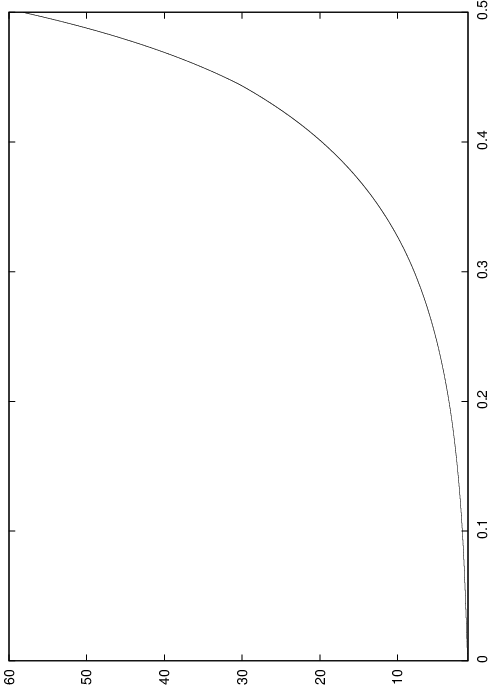}
\includegraphics[angle=-90,width=0.3\textwidth]{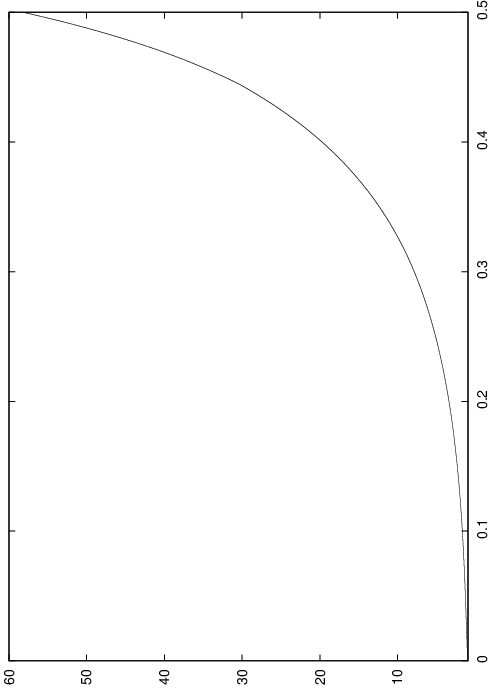}
\includegraphics[angle=-90,width=0.3\textwidth]{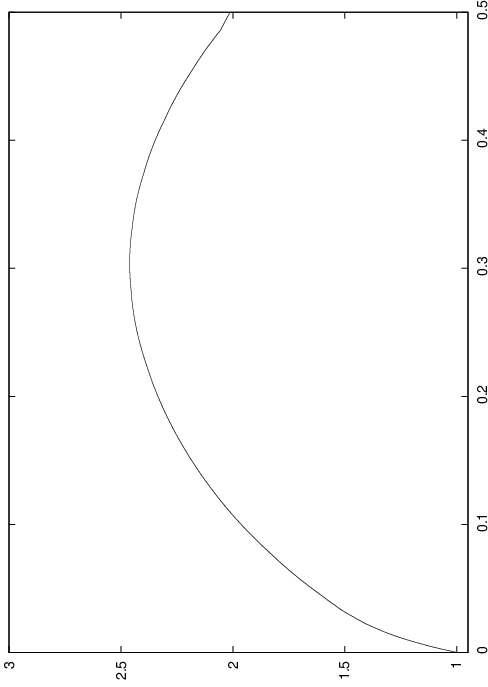}
\includegraphics[angle=-90,width=0.3\textwidth]{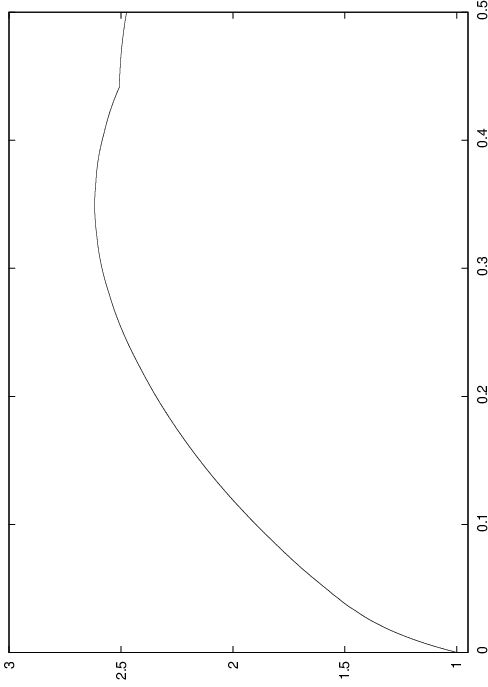}
\caption{
The ratio plots (\ref{eq:ratio}) for the schemes 
$(\GDmckappa_m)^h$, $(\GDmc_{m})^h$, $(\GDmc_{m,\star})^h$, 
$(\BGNmc_{m})^h$ and $(\BGNmc_{m,\star})^h$ for simulations as in 
Figure~\ref{fig:mc_cigar}.} 
\label{fig:mc_cigar_r}
\end{figure}%

We now employ the scheme $(\BGNmckappa_m^\partial)^h$ to compute some 
geodesics. To this end, we use as initial data a straight line
segment between the two fixed endpoints, and let the scheme run until time 
$T=10$, at which point the discrete energy $L_g^h(\vec X^m)$ is almost constant
in time. For each run the discretization parameters are $J=128$ and 
$\ttau = 10^{-4}$. For the hyperbolic plane, 
we show the final curves $\Gamma^M$ in Figure~\ref{fig:mc_geodesics}.
\begin{figure}
\center
\includegraphics[angle=-90,width=0.55\textwidth]{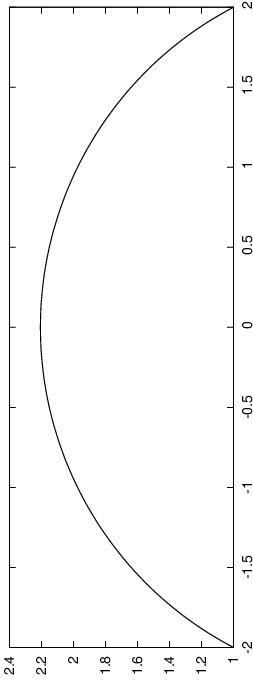}
\includegraphics[angle=-90,width=0.4\textwidth]{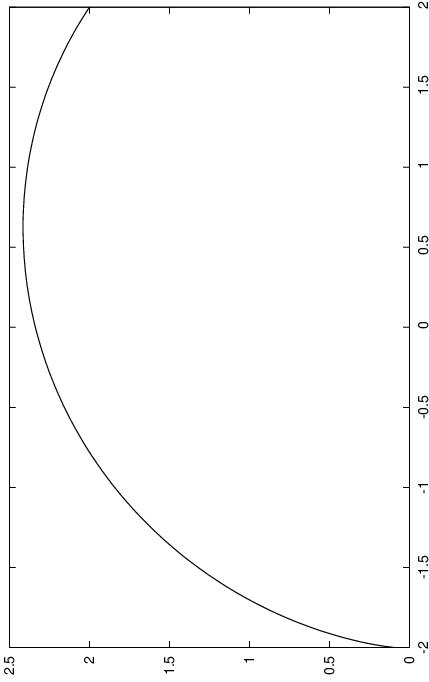}
\caption{
$(\BGNmckappa_m^\partial)^h$
Geodesics in the hyperbolic plane, obtained with curvature flow. 
The left geodesic connects the points
$(\pm2,1)^T$, with $L_g^h(\vec X^M) = 2.887$, 
while the right geodesics connects
$(-2,0.1)^T$ and $(2,2)^T$, with $L_g^h(\vec X^M) = 4.620$.
} 
\label{fig:mc_geodesics}
\end{figure}%
Repeating the first of the two geodesic computations for the metric 
(\ref{eq:gmu}) with $\mu = 0.1$ and $\mu = 2$ yields the results in
Figure~\ref{fig:mc_geodesics2}.
\begin{figure}
\center
\includegraphics[angle=-90,width=0.6\textwidth]{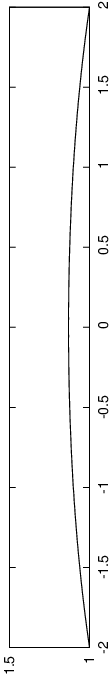}
\includegraphics[angle=-90,width=0.35\textwidth]{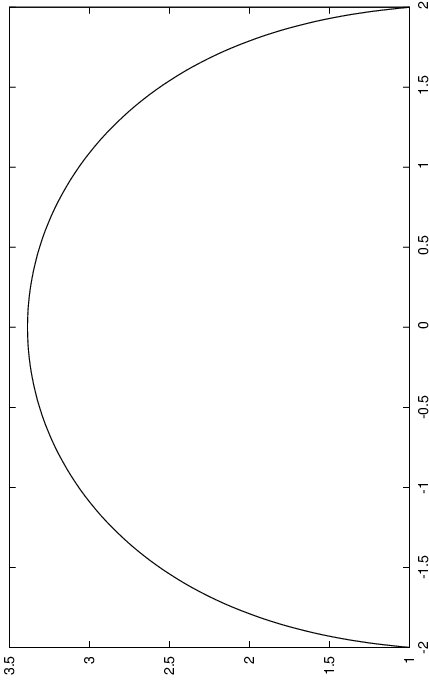}
\caption{
$(\BGNmckappa_m^\partial)^h$
Geodesics connecting the points $(\pm2,1)^T$ for (\ref{eq:gmu}) 
with $\mu = 0.1$ (left) and $\mu = 2$ (right). The discrete lengths are
$L_g^h(\vec X^M) = 3.977$ and $L_g^h(\vec X^M) = 1.645$, respectively.
} 
\label{fig:mc_geodesics2}
\end{figure}%

\subsubsection{Curve diffusion}
For curve diffusion in the hyperbolic plane, circles are steady state 
solutions. This follows from the fact that, analogously to the Euclidean case,
circles in the hyperbolic plane have constant curvature, see
(\ref{eq:app_varkappag}) in Appendix~\ref{sec:A1}.
For the scheme $(\BGNsd_m)^h$
we now show the evolutions of two cigar shapes towards a circle.
The discretization parameters are $J=128$ and $\ttau = 10^{-4}$.
In Figure~\ref{fig:sd_cigar} the initial shape is aligned horizontally,
whereas in Figure~\ref{fig:sd_cigar2} it is aligned vertically.
The relative area losses, measured in terms of (\ref{eq:Agh}), were
$-0.24\%$ and $0.04\%$ for these two simulations.
Repeating the simulations for the schemes $(\BGNsdstab_m)^h$ and 
$(\BGNsdstab_{m,\star})^h$ produces nearly identical results, with the main
difference being the larger ratios (\ref{eq:ratio}). For the simulations
corresponding to Figure~\ref{fig:sd_cigar}, the ratio reaches a value of 
about $6$, and the relative area loss is $-0.01\%$ for both
$(\BGNsdstab_m)^h$ and $(\BGNsdstab_{m,\star})^h$.
For the runs shown in Figure~\ref{fig:sd_cigar2}
the ratio (\ref{eq:ratio}) reaches a value around $2$,
and the relative area losses are $0.13\%$ and $0.14\%$, respectively.
\begin{figure}
\center
\begin{minipage}{0.35\textwidth}
\includegraphics[angle=-90,width=0.9\textwidth]{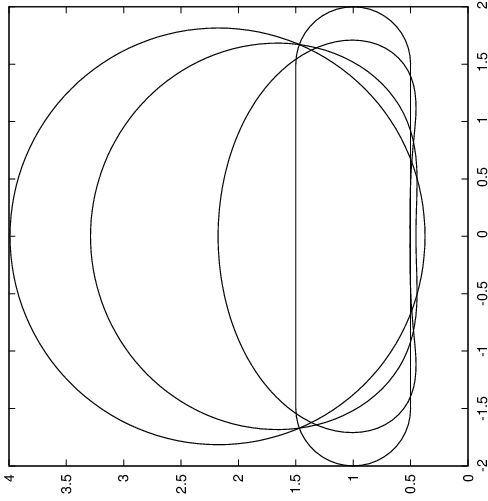}
\end{minipage} \
\begin{minipage}{0.6\textwidth}
\includegraphics[angle=-90,width=0.45\textwidth]{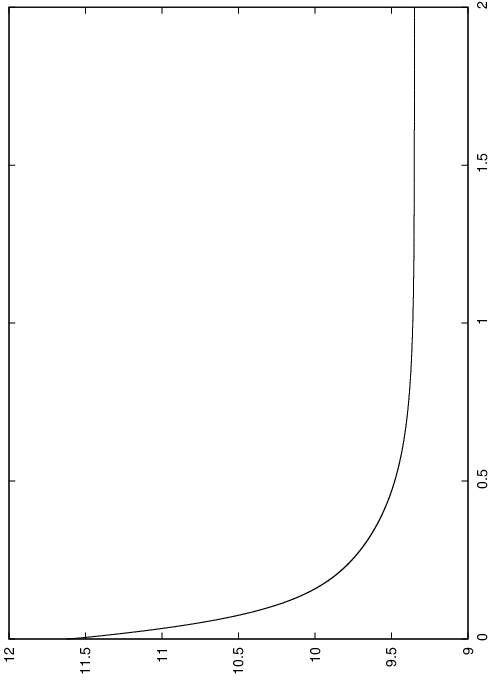}
\includegraphics[angle=-90,width=0.45\textwidth]{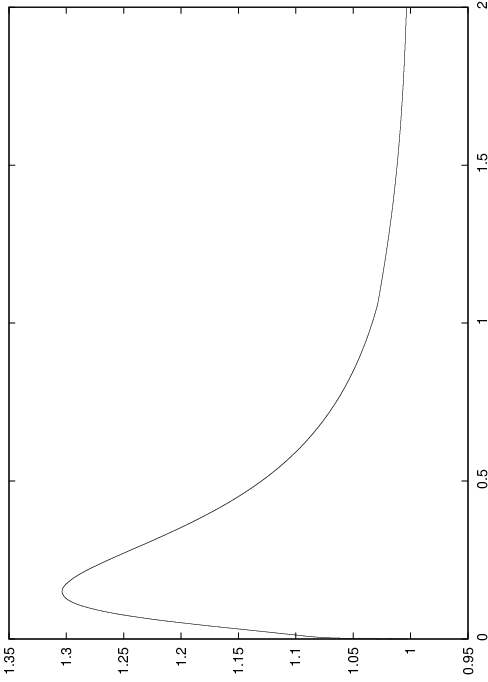}
\includegraphics[angle=-90,width=0.45\textwidth]{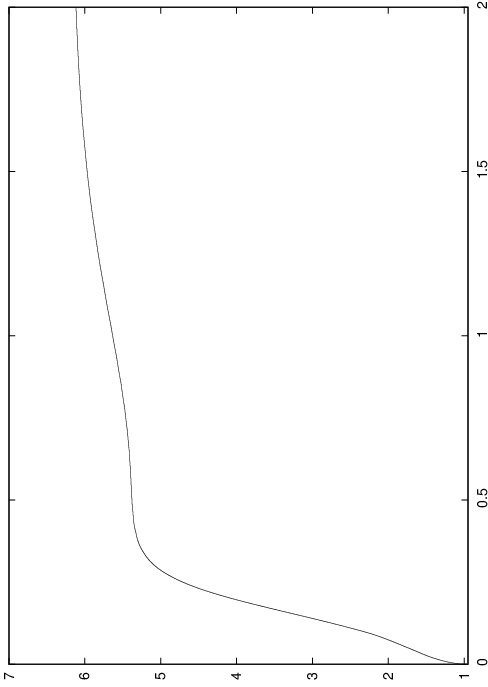}
\includegraphics[angle=-90,width=0.45\textwidth]{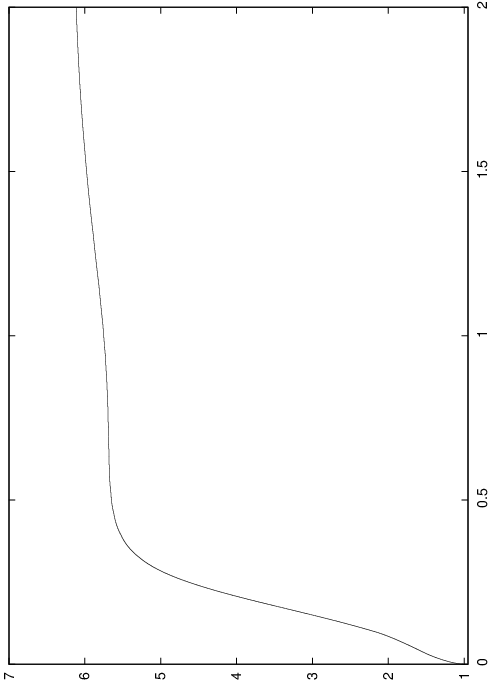}
\end{minipage}
\caption{
$(\BGNsd_m)^h$
Curve diffusion towards a circle. 
Solution at times $t=0,0.1,0.5,2$.
On the right are plots of the discrete energy $L_g^h(\vec X^m)$
and of the ratio (\ref{eq:ratio}), with plots of the ratio (\ref{eq:ratio})
for the schemes $(\BGNsdstab_m)^h$ and $(\BGNsdstab_{m,\star})^h$ below.}
\label{fig:sd_cigar}
\end{figure}%
\begin{figure}
\center
\center
\begin{minipage}{0.35\textwidth}
\includegraphics[angle=-90,width=0.6\textwidth]{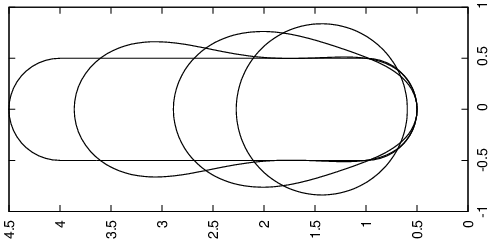}
\end{minipage} \
\begin{minipage}{0.6\textwidth}
\includegraphics[angle=-90,width=0.45\textwidth]{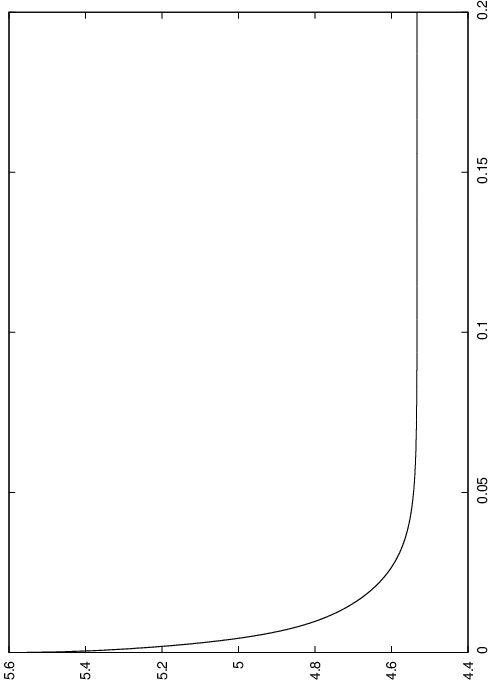}
\includegraphics[angle=-90,width=0.45\textwidth]{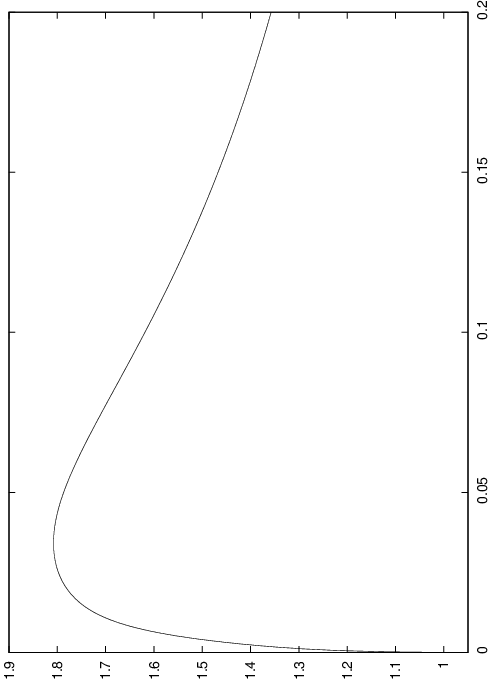}
\includegraphics[angle=-90,width=0.45\textwidth]{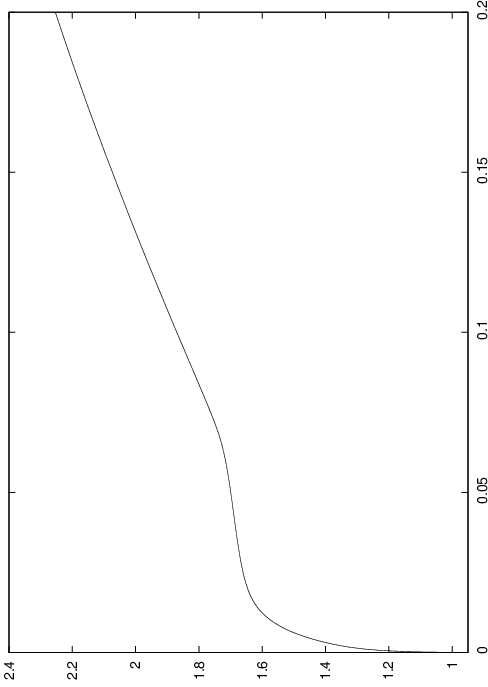}
\includegraphics[angle=-90,width=0.45\textwidth]{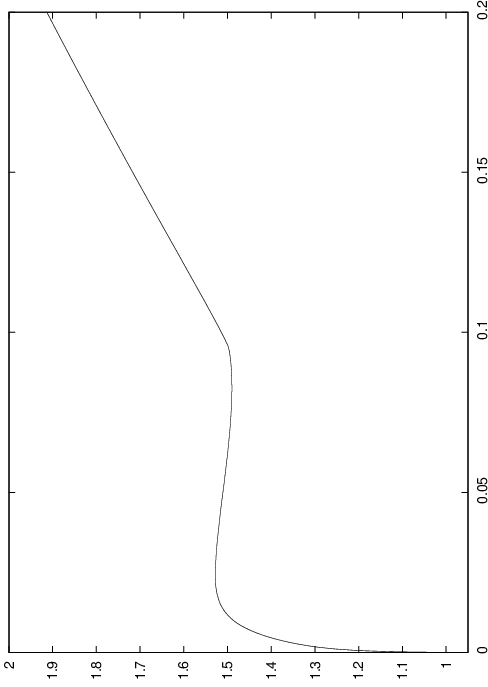}
\end{minipage}
\caption{
$(\BGNsd_m)^h$
Curve diffusion towards a circle. 
Solution at times $t=0,10^{-3},10^{-2},0.2$.
On the right are plots of the discrete energy $L_g^h(\vec X^m)$
and of the ratio (\ref{eq:ratio}), with plots of the ratio (\ref{eq:ratio})
for the schemes $(\BGNsdstab_m)^h$ and $(\BGNsdstab_{m,\star})^h$ below.} 
\label{fig:sd_cigar2}
\end{figure}%

For the metric (\ref{eq:gmu}) with $\mu\not\in\{0,1\}$, circles are in general
not steady state solutions for curve diffusion. 
We demonstrate this with 
numerical experiments for the metrics (\ref{eq:gmu}) with $\mu=0.1$ and
$\mu=2$. For the case $\mu=0.1$ we start from 
the initial data (\ref{eq:X0}) with $a(0) = 1.01$ and $r(0) = 1$,
and compute the evolution with  the scheme $(\BGNsd_{m})^h$ with the
discretization parameters $J=128$ and $\ttau = 10^{-3}$.
The results are shown in Figure~\ref{fig:sdmu01_circle}, 
where we note that the relative area loss, 
measured in terms of (\ref{eq:Agh}), was $-0.04\%$ for this 
experiment. The final shape has height $2.224$ and width $2.233$.
For the case $\mu=2$ we use the initial data (\ref{eq:X0}) with $a(0) = 2$ 
and $r(0) = 1$, and leave all the remaining parameters unchanged.
The evolution is shown in Figure~\ref{fig:sdmu2_circle2},
with a relative area loss of $0.22\%$. 
The final shape has height $0.03617$ and width $0.03609$.
\begin{figure}
\center
\includegraphics[angle=-90,width=0.3\textwidth]{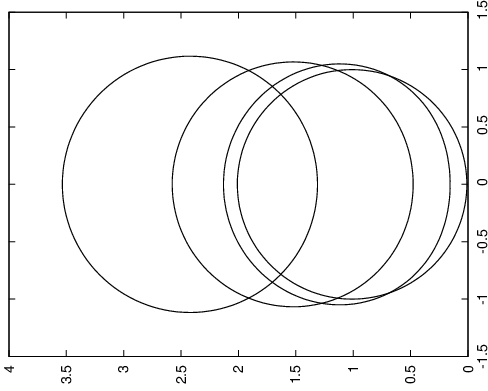}
\includegraphics[angle=-90,width=0.3\textwidth]{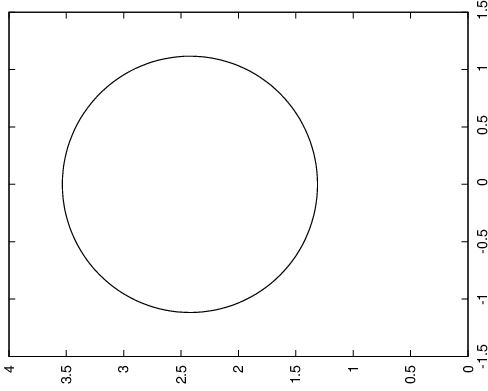}
\includegraphics[angle=-90,width=0.37\textwidth]{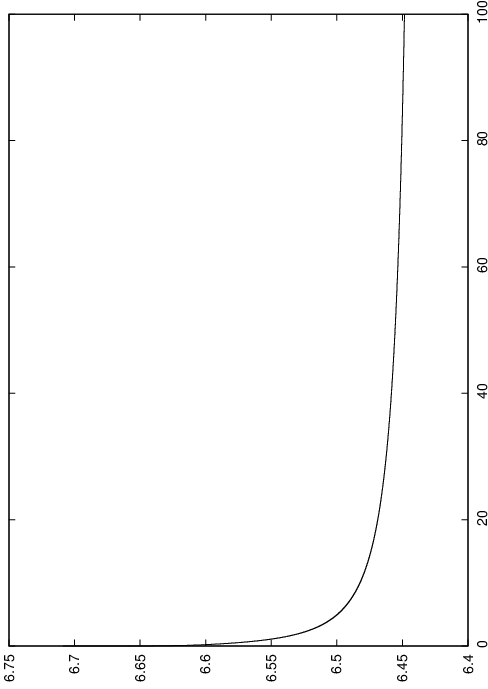}
\caption{
$(\BGNsd_m)^h$
Curve diffusion for (\ref{eq:gmu}) with $\mu=0.1$, starting from a circle. 
Solution at times $t=0,1,10,100$, and separately at time $t=100$.
On the right is a plot of the discrete energy $L_g^h(\vec X^m)$.}
\label{fig:sdmu01_circle}
\end{figure}%
\begin{figure}
\center
\includegraphics[angle=-90,width=0.3\textwidth]{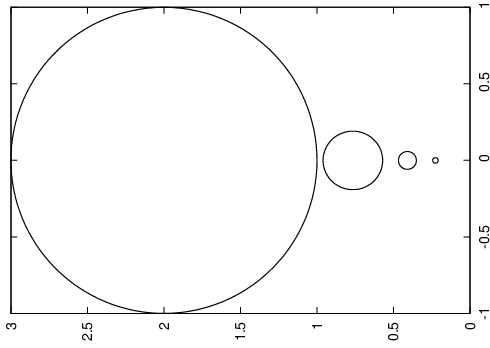}
\qquad
\includegraphics[angle=-90,width=0.5\textwidth]{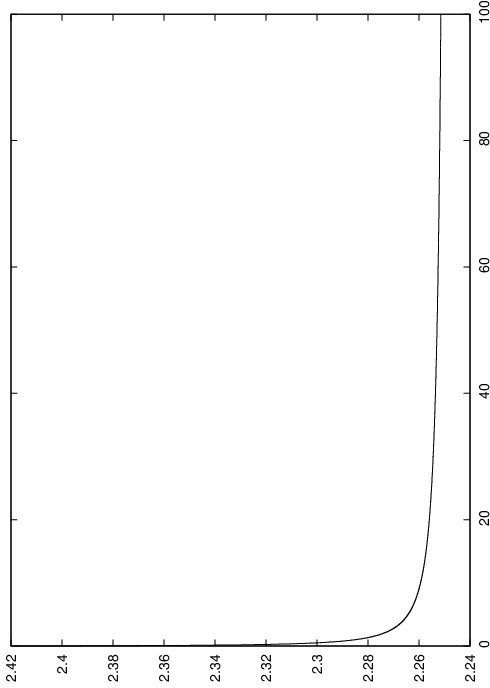}
\caption{
$(\BGNsd_m)^h$
Curve diffusion for (\ref{eq:gmu}) with $\mu=2$, starting from a circle. 
Solution at times $t=0,1,10,100$. 
On the right is a plot of the discrete energy $L_g^h(\vec X^m)$.}
\label{fig:sdmu2_circle2}
\end{figure}%

\subsubsection{Elastic flow}

For hyperbolic elastic flow, (\ref{eq:hb_elastflow}), we recall the
true solution (\ref{eq:app_ansatz}) with (\ref{eq:ODEa},b) from 
Appendix~\ref{sec:A1}.
We use this true solution for a convergence test for our two schemes for
elastic flow. Similarly to Table~\ref{tab:mcf1}
we start with the initial data (\ref{eq:X0}) 
with $r(0)=1$ and $a(0) = 1.1$.
We compute the error $\errorXx$ over the time interval $[0,1]$ between
the true solution (\ref{eq:app_ansatz}) 
and the discrete solutions for the schemes
$(\BGNwf_m)^h$ and $(\BGNwfwf_m)^h$. 
We recall from Appendix~\ref{sec:A1} 
that the circle will sink and shrink. In fact,
at time $T=1$ it holds that $r(T) = 0.645$ and $a(T) = 0.792$,
so that $\sigma(T) = \frac{a(T)}{r(T)} = 1.227 < 2^\frac12$, 
see Appendix~\ref{sec:A1}.
Here we use the time step size $\ttau=0.1\,h^2_{\Gamma^0}$,
where $h_{\Gamma^0}$ is the maximal edge length of $\Gamma^0$.
The computed errors are reported in Table~\ref{tab:wf1}.
\begin{table}
\center
\begin{tabular}{|rr|c|c|c|c|}
\hline
& & \multicolumn{2}{c|}{$(\BGNwf_m)^h$}&
\multicolumn{2}{c|}{$(\BGNwfwf_m)^h$} \\
$J$  & $h_{\Gamma^0}$ & $\errorXx$ & EOC & $\errorXx$ & EOC \\ \hline
32   & 2.1544e-01 & 3.5987e-02 & ---      & 3.1536e-02 & --- \\
64   & 1.0792e-01 & 8.7266e-03 & 2.049469 & 7.9745e-03 & 1.988856 \\         
128  & 5.3988e-02 & 2.1624e-03 & 2.014294 & 1.9958e-03 & 1.999924 \\
256  & 2.6997e-02 & 5.3929e-04 & 2.003821 & 4.9957e-04 & 1.998529 \\
512  & 1.3499e-02 & 1.3474e-04 & 2.000990 & 1.2489e-04 & 2.000136 \\
\hline
\end{tabular}
\caption{Errors for the convergence test for (\ref{eq:app_ansatz}) with
(\ref{eq:ODEa},b), with $r(0) = 1$, $a(0) = 1.1$,
over the time interval $[0,1]$.}
\label{tab:wf1}
\end{table}%
We repeat the convergence test with the initial data $r(0)=1$ and $a(0) = 2$,
so that the circle will now raise and expand. In fact,
at time $T=1$ it holds that $r(T) = 1.677$ and $a(T) = 2.411$.
so that $\sigma(T) = \frac{a(T)}{r(T)} = 1.437 > 2^\frac12$, 
see Appendix~\ref{sec:A1}.
The computed errors are reported in Table~\ref{tab:wf2}.
\begin{table}
\center
\begin{tabular}{|rr|c|c|c|c|}
\hline
& & \multicolumn{2}{c|}{$(\BGNwf_m)^h$}&
\multicolumn{2}{c|}{$(\BGNwfwf_m)^h$} \\
$J$  & $h_{\Gamma^0}$ & $\errorXx$ & EOC & $\errorXx$ & EOC \\ \hline
32   & 2.1544e-01 & 1.8228e-01 & ---      & 4.0407e-02 & --- \\
64   & 1.0792e-01 & 4.3289e-02 & 2.079649 & 1.0436e-02 & 1.958277 \\         
128  & 5.3988e-02 & 1.0699e-02 & 2.018035 & 2.6286e-03 & 1.990692 \\
256  & 2.6997e-02 & 2.6668e-03 & 2.004616 & 6.5835e-04 & 1.997688 \\
512  & 1.3499e-02 & 6.6621e-04 & 2.001168 & 1.6467e-04 & 1.999384 \\
\hline
\end{tabular}
\caption{Errors for the convergence test for (\ref{eq:app_ansatz}) with
(\ref{eq:ODEa},b), with $r(0) = 1$, $a(0) = 2$,
over the time interval $[0,1]$.}
\label{tab:wf2}
\end{table}%

For the scheme $(\BGNwf_m)^h$
we show the evolution of a cigar shape in Figure~\ref{fig:wf_cigar}.
The discretization parameters are $J=128$ and $\ttau = 10^{-4}$.
Rotating the initial shape by $90^\circ$ degrees yields the evolution in
Figure~\ref{fig:wf_cigar2}. 
As expected, in both cases the curve evolves to a circle.
In the first case, at time $t_m=4$ it holds
that $\sigma^m = \frac{a^m}{r^m} = 1.412 < 2^\frac12$, where
$a^m = 
\tfrac12\,(\max_I \vec X^m\,.\,\vec\ek_2 + \min_I \vec X^m\,.\,\vec\ek_2)$
and 
$r^m =
\tfrac12\,(\max_I \vec X^m\,.\,\vec\ek_2 - \min_I \vec X^m\,.\,\vec\ek_2)$,
and so the approximate circle is going to continue to
sink and shrink. This is evidenced by the
plot of $a^m$ over time in Figure~\ref{fig:wf_cigar},
where we see that $a^m$ eventually decreases.
In the second simulation, on the other hand, we observe
at time $t_m=2$ that $\sigma_m = 1.415 > 2^\frac12$, and so here the 
approximate circle will continue to rise and expand, which can also 
be seen from the plot of $a^m$ over time in Figure~\ref{fig:wf_cigar2}.
\begin{figure}
\center
\begin{minipage}{0.45\textwidth}
\includegraphics[angle=-90,width=0.99\textwidth]{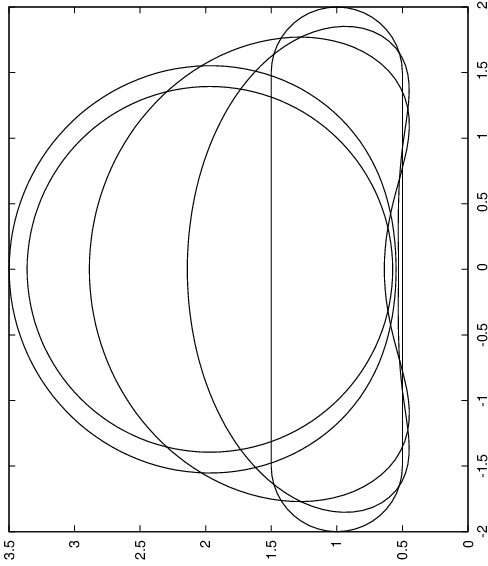}
\end{minipage}
\begin{minipage}{0.5\textwidth}
\includegraphics[angle=-90,width=0.7\textwidth]{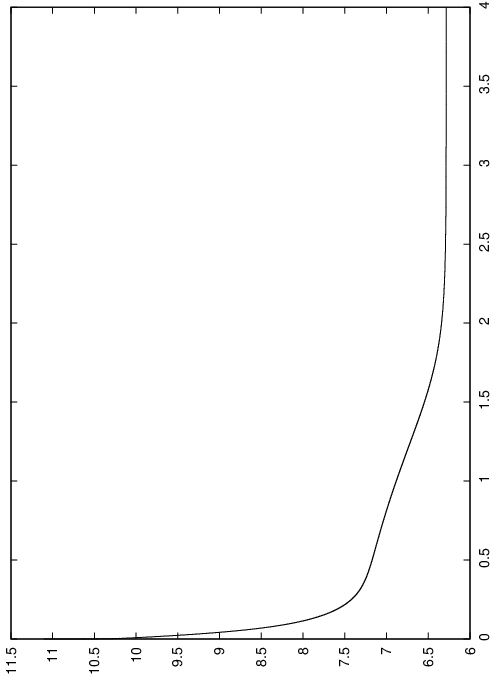}
\includegraphics[angle=-90,width=0.7\textwidth]{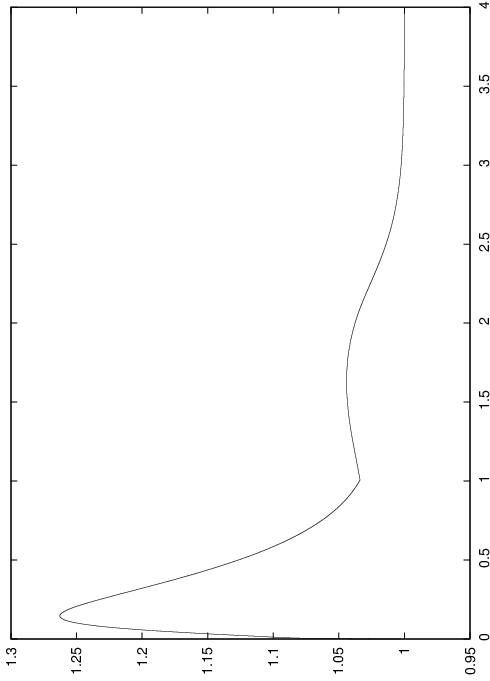}
\includegraphics[angle=-90,width=0.7\textwidth]{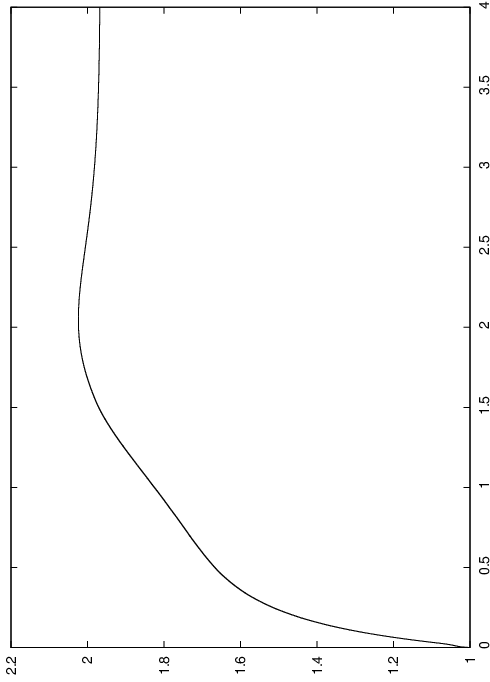}
\end{minipage}
\caption{
$(\BGNwf_m)^h$
Hyperbolic elastic flow towards a sinking and shrinking circle. 
Solution at times $t=0,0.1,0.5,2,4$.
On the right are plots of the discrete energy (\ref{eq:Wgh}), 
of the ratio (\ref{eq:ratio}) and of $a^m$.} 
\label{fig:wf_cigar}
\end{figure}%
\begin{figure}
\center
\begin{minipage}{0.4\textwidth}
\includegraphics[angle=-90,width=0.95\textwidth]{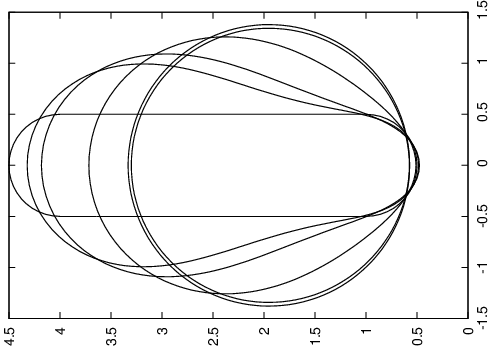}
\end{minipage} \qquad
\begin{minipage}{0.5\textwidth}
\includegraphics[angle=-90,width=0.7\textwidth]{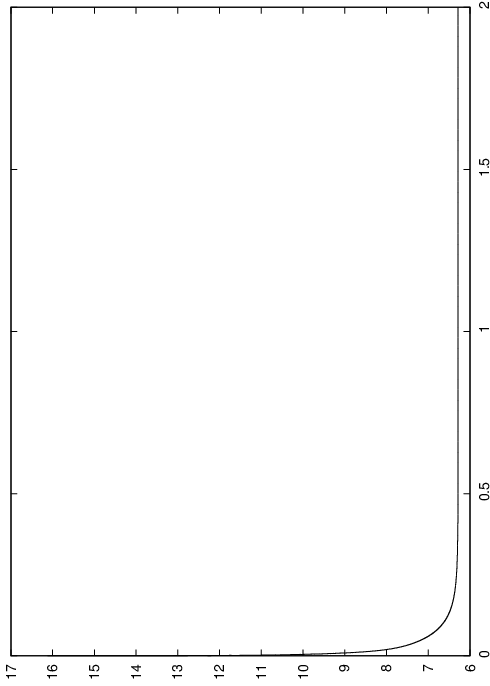}\\
\includegraphics[angle=-90,width=0.7\textwidth]{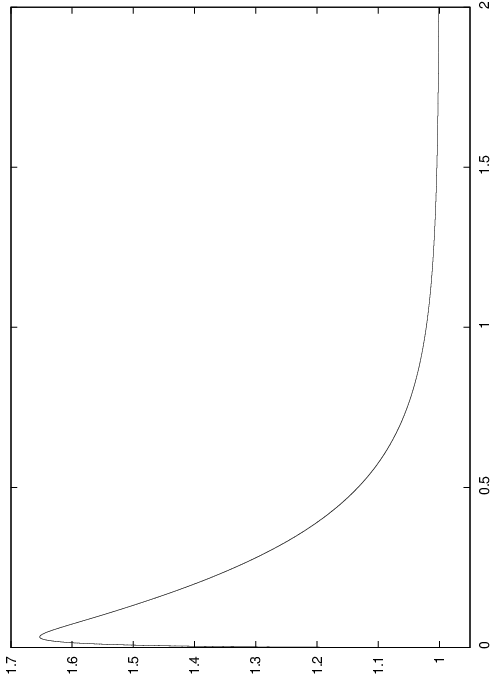}\\
\includegraphics[angle=-90,width=0.7\textwidth]{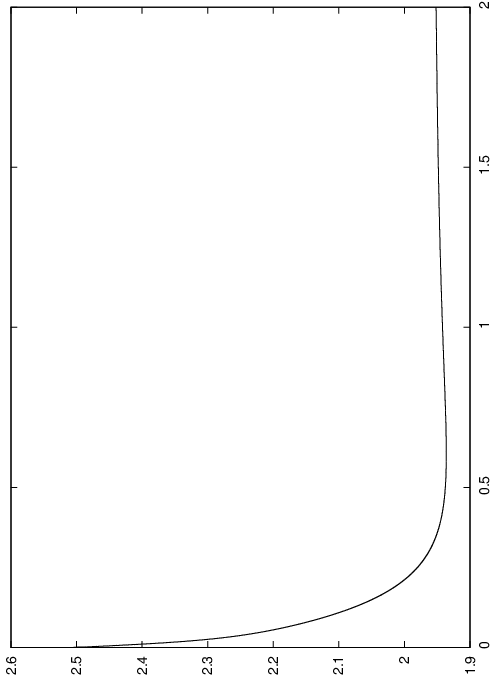}\\
\end{minipage}
\caption{
$(\BGNwf_m)^h$
Hyperbolic elastic flow towards a rising and expanding circle. 
Solution at times $t=0,0.01,0.02,0.1,0.5,2$.
On the right are plots of the discrete energy (\ref{eq:Wgh}),  
of the ratio (\ref{eq:ratio}) and of $a^m$.} 
\label{fig:wf_cigar2}
\end{figure}%
The same computations for the scheme $(\BGNwfwf_m)^h$ yield almost identical
results, with the main difference being the evolution of the ratio 
(\ref{eq:ratio}). We present the plots of this quantity for the scheme 
$(\BGNwfwf_m)^h$ for these two simulations in Figure~\ref{fig:wfwf_r}, where we
observe that the obtained curves are far from being equidistributed, although
the ratio (\ref{eq:ratio}) remains bounded, eventually settling on a value
close to $4$.
\begin{figure}
\center
\includegraphics[angle=-90,width=0.4\textwidth]{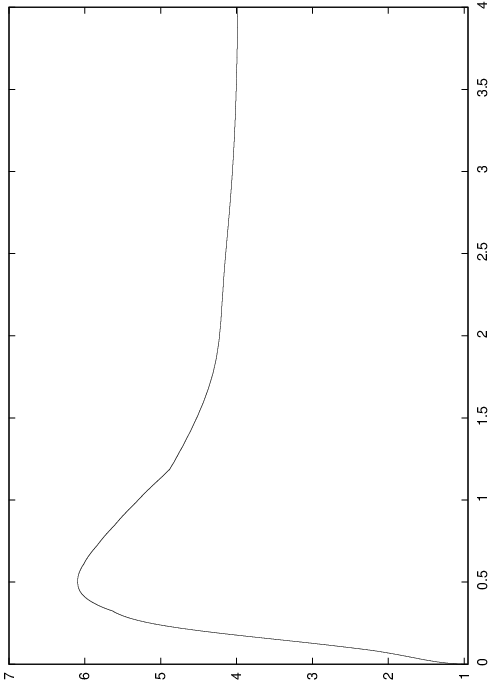}
\includegraphics[angle=-90,width=0.4\textwidth]{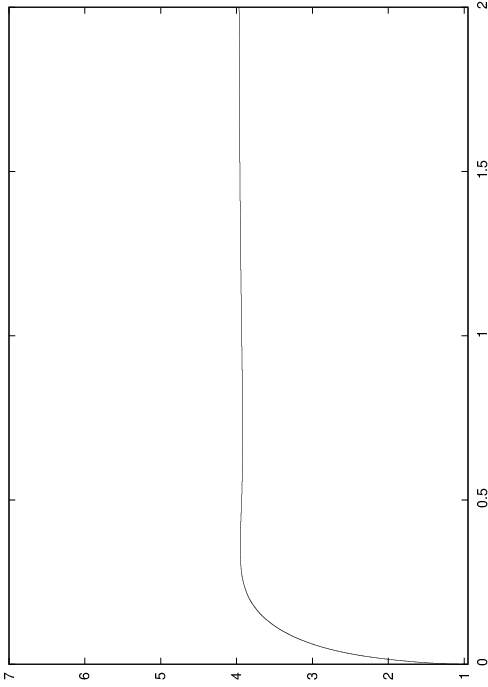}
\caption{
$(\BGNwfwf_m)^h$
The ratio plots (\ref{eq:ratio}) for the two simulations as in 
Figure~\ref{fig:wf_cigar} and \ref{fig:wf_cigar2}.} 
\label{fig:wfwf_r}
\end{figure}%

Finally, on recalling Remark~\ref{rem:lambda}, we repeat the simulation in
Figure~\ref{fig:wf_cigar} now for the scheme $(\BGNwfwf_m^{\lambda})^h$ 
with $\lambda=1$. As expected, the length penalization means that now the 
evolution reaches a steady state, as can be seen from the plots in
Figure~\ref{fig:wfwf_cigar_lambda}.
\begin{figure}
\center
\begin{minipage}{0.45\textwidth}
\includegraphics[angle=-90,width=0.99\textwidth]{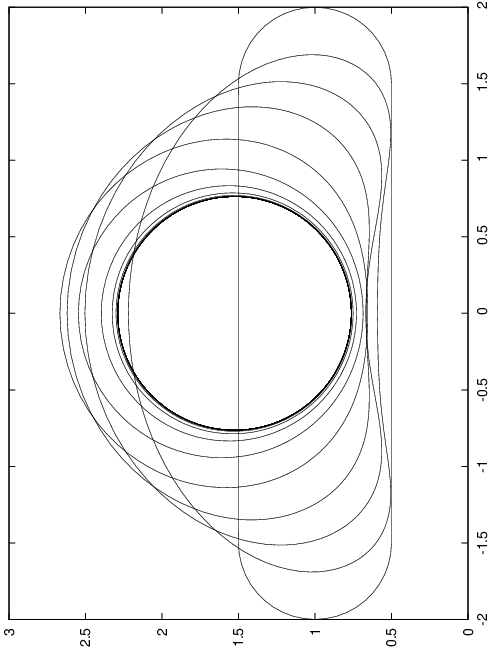}
\end{minipage}
\begin{minipage}{0.5\textwidth}
\includegraphics[angle=-90,width=0.7\textwidth]{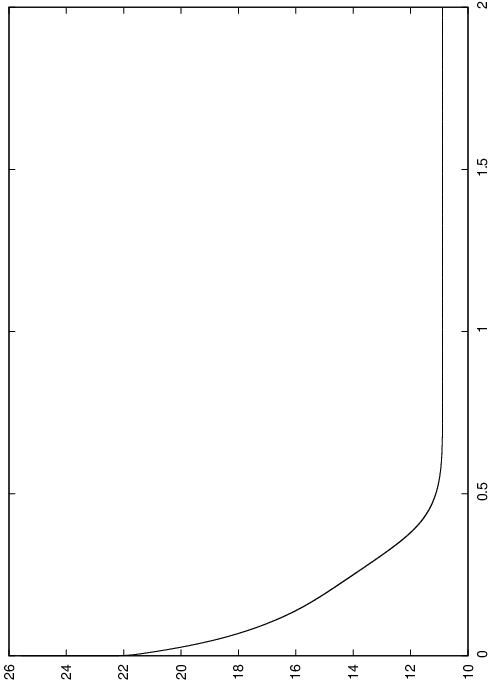}
\includegraphics[angle=-90,width=0.7\textwidth]{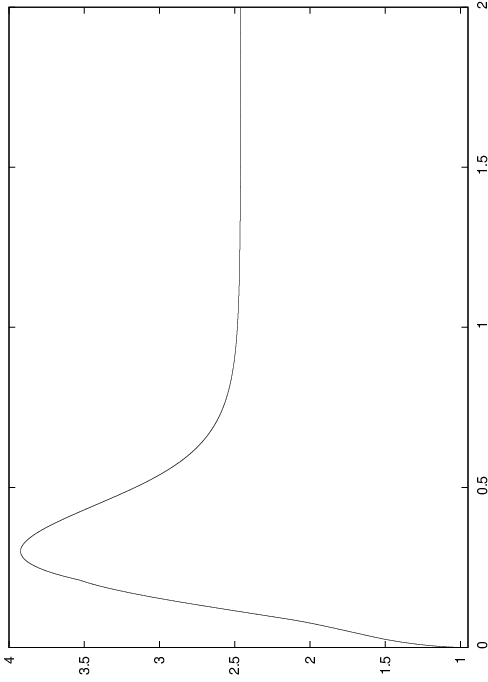}
\end{minipage}
\caption{
$(\BGNwfwf_m^{\lambda})^h$
Generalized hyperbolic elastic flow, with $\lambda=1$, towards a circle. 
Solution at times $t=0,0.1,\ldots,2$.
On the right are plots of the discrete energy $\widetilde W_g^m + 
\lambda\,L_g^h(\vec X^m)$, and of the ratio (\ref{eq:ratio}).} 
\label{fig:wfwf_cigar_lambda}
\end{figure}%

\subsection{The elliptic plane, and (\ref{eq:galpha}) for $\alpha \in \bR$}

Unless otherwise stated, all our computations in this section 
are for the elliptic plane,
i.e.\ (\ref{eq:gstereo}) or, equivalently, (\ref{eq:galpha}) with $\alpha=-1$.

Similarly to Figure~\ref{fig:mc_geodesics}, we use the 
scheme $(\BGNmckappa_m^\partial)^h$ to compute some geodesics
in the elliptic plane. Here it can happen that a finite geodesic does not
exist, and so the evolution of curvature flow will yield a curve that expands
continuously. We visualize this effect in Figure~\ref{fig:ellipt_geodesic}.
Here the initial curve consists of two straight line segments which
connect the points $(\pm9,\mp1)^T$ with $(9,9)^T$ in $H$. 
As the discretization parameters we choose $J=128$ and 
$\ttau = 10^{-4}$.
\begin{figure}
\center
\includegraphics[angle=-90,width=0.45\textwidth]{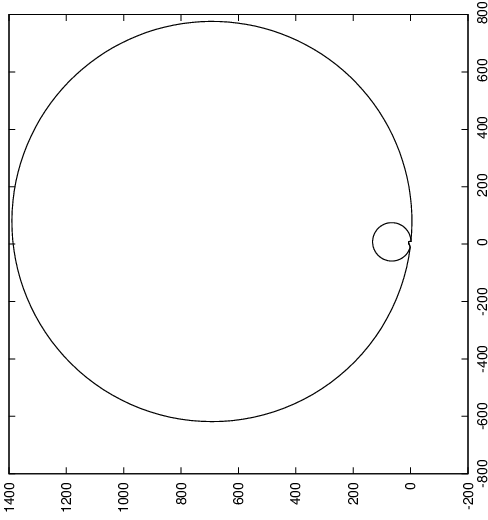}
\includegraphics[angle=-90,width=0.45\textwidth]{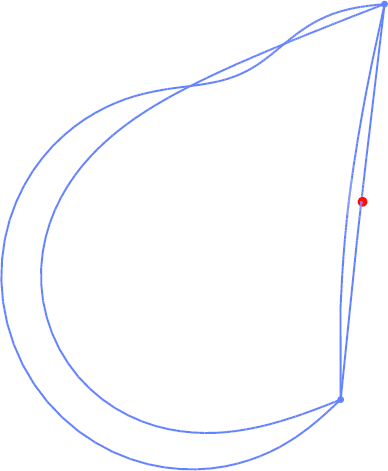}
\caption{
$(\BGNmckappa_m^\partial)^h$
Curvature flow towards an infinite geodesic in the elliptic plane.
The solutions $\vec X^m$
at times $t = 10^{-3}, 10^{-2}, 0.1, 1$. On the right we visualize
$\vec\Phi(\vec X^m)$ at the same times, for (\ref{eq:gstereo}), with the
north pole, $\vec\ek_3$, represented by a red dot.
} 
\label{fig:ellipt_geodesic}
\end{figure}%

\subsection{Geodesic evolution equations}

In order to demonstrate the possibility to compute geodesic evolution laws with
the introduced approximations, we present a computation for geodesic curvature
flow on a Clifford torus. To this end, we employ the metric induced by
(\ref{eq:gtorus}) with $\mathfrak s = 1$, so that the torus has radii $r=1$ and
$R = 2^\frac12$. As initial data we choose a circle in $H$ with radius $4$ and
centre $(0,2)^T$. For the simulation in Figure~\ref{fig:mctorus} we use the
scheme $(\BGNmckappa_m)^h$ with the discretization parameters 
$J=256$ and $\ttau=10^{-3}$. In $H$ it can be observed that the initial circle
deforms and shrinks to a point. On the surface $\mathcal{M} = \vec\Phi(H)$,
the initial curve is homotopic to a point, and so unravels and then shrinks to
a point.
\begin{figure}
\center
\includegraphics[angle=-90,width=0.45\textwidth]{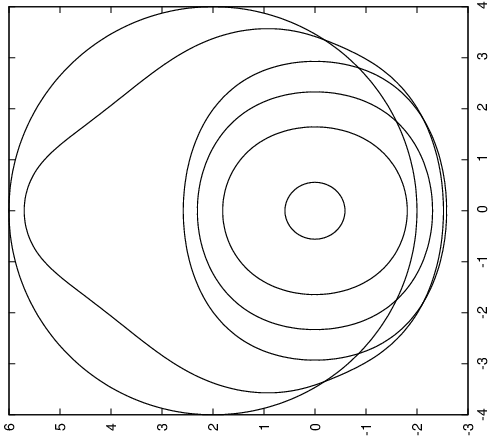}\quad
\includegraphics[angle=-90,width=0.45\textwidth]{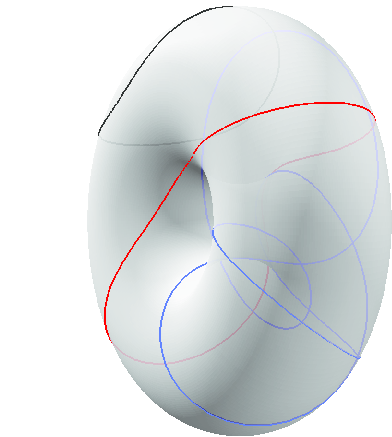}
\caption{
$(\BGNmckappa_m)^h$
Geodesic curvature flow on a Clifford torus.
The solutions $\vec X^m$
at times $t = 0, 1, 10, 20, 30, 39$. On the right we visualize
$\vec\Phi(\vec X^m)$ at times $t=0,30,39$, for (\ref{eq:gtorus}) with 
$\mathfrak s=1$.
} 
\label{fig:mctorus}
\end{figure}%

\section*{Conclusions}
We have derived and analysed various finite element schemes for the numerical
approximation of curve evolutions in two-dimensional Riemannian manifolds. 
The considered
evolution laws include curvature flow, curve diffusion and
elastic flow. The Riemannian manifolds that can be considered in our framework
include the hyperbolic plane, the hyperbolic disk and the elliptic plane. 
More generally, any metric conformal to the two-dimensional Euclidean metric
can be considered. We mention that locally this is
always possible for two-dimensional Riemannian manifolds. An example of this
are two-dimensional surfaces in $\bR^d$, $d\geq3$, which are conformally
parameterized.
Our approach also allows
computations for geometric evolution equations of axisymmetric hypersurfaces in
$\bR^d$, $d\geq3$.

For the standard Euclidean plane our proposed schemes collapse to variants
introduced by the authors in much earlier papers, see 
\cite{triplej,triplejMC}.

\begin{appendix}

\renewcommand{\theequation}{A.\arabic{equation}}
\setcounter{equation}{0}
\section{Some exact circular solutions} \label{sec:A}

Here we state some exact solutions for the three geometric evolution 
equations we consider, i.e.\ (\ref{eq:Vgkg}), (\ref{eq:sdg}) and
(\ref{eq:g_elastflow}), for selected metrics $g$. 

\subsection{The hyperbolic plane} \label{sec:A1}

Here we consider circular solutions in the hyperbolic plane, based on the 
exact solution for hyperbolic elastic flow 
from \citet[Lemma~3.1]{DallAcquaS18preprint}.

In particular, we make the ansatz
\begin{equation} \label{eq:app_ansatz}
\vec x(\rho, t) = a(t)\,\vec\ek_2 + r(t)\left[
\cos2\,\pi\,\rho\,\vec\ek_1 + \sin2\,\pi\,\rho\,\vec\ek_2 \right]
\qquad \rho \in I\,,
\end{equation}
for $a(t) > r(t) > 0$ for all $t \in [0,T]$. Then it follows from
(\ref{eq:mu_varkappag}) for $\mu=1$ that 
\begin{equation} \label{eq:app_varkappag}
\varkappa_g(\rho,t) = \frac{a(t)}{r(t)} \qquad \qquad \rho \in I\,,\
t \in [0,T]\,.
\end{equation}
Moreover, it holds that
\begin{align}
\mathcal{V}_g = (\vec x\,.\,\vec\ek_2)^{-1}\,\vec x_t\,.\,\vec\nu = 
- \left( a(t) + r(t)\,\sin2\,\pi\,\rho \right)^{-1}\,
\left[ a'(t)\,\sin2\,\pi\,\rho + r'(t) \right] .
\label{eq:app_Vg}
\end{align}

We now consider curvature flow, (\ref{eq:Vgkg}). 
With the ansatz (\ref{eq:app_ansatz}), on noting (\ref{eq:app_varkappag})
and (\ref{eq:app_Vg}), we have that (\ref{eq:Vgkg}) reduces to
\begin{equation} \label{eq:app_Vgkg}
a'(t)\,\sin2\,\pi\,\rho + r'(t) = - (a(t) + r(t)\,\sin2\,\pi\,\rho)\,
\frac{a(t)}{r(t)} \,.
\end{equation}
Differentiating (\ref{eq:app_Vgkg}) with respect to $\rho$ yields that
$a'(t) = - a(t)$, and hence $a(t) = e^{-t}\,a(0)$. 
Combining this with (\ref{eq:app_Vgkg}) yields that
\begin{equation*} 
r'(t) = - a(t) \,\frac{a(t)}{r(t)} 
\qquad \Rightarrow \quad
\tfrac12\,\ddt r^2(t) = - a^2(t) = - e^{-2\,t}\,a^2(0)\,.
\end{equation*}
Hence 
\begin{equation*} 
r^2(t) - r^2(0) = - 2\,a^2(0)\,\int_0^t e^{-2\,u}\;{\rm d}u = 
- 2\,a^2(0)\left[ -\tfrac12\,e^{-2\,t} + \tfrac12 \right]
= a^2(0)\left[ e^{-2\,t} -1 \right] ,
\end{equation*}
and so (\ref{eq:app_ansatz}) with
\begin{equation} \label{eq:true_Vgkg}
a(t) = e^{-t}\,a(0)\,,\quad
r(t) = \left( r^2(0) - a^2(0) \left[ 1 - e^{-2\,t} \right]\right)^\frac12
\end{equation}
is a solution to (\ref{eq:Vgkg}). We observe that circles move towards the
$\vec\ek_1$--axis and shrink as they do so. The finite extinction time is 
$T_0 = -\frac12\,\ln \left[1 - \left(\frac{r(0)}{a(0)}\right)^2 \right]$.

As regards (\ref{eq:sdg}), it is obvious from (\ref{eq:app_varkappag}) that 
any solution of the form (\ref{eq:app_ansatz}) satisfies $\mathcal{V}_g = 0$, 
and so circles are stationary solutions for curve diffusion.

Finally, for the elastic flow (\ref{eq:g_elastflow}), we recall
the exact solution for the hyperbolic elastic flow,
(\ref{eq:hb_elastflow}), from \citet[Lemma~3.1]{DallAcquaS18preprint}.

With the ansatz (\ref{eq:app_ansatz}), on noting (\ref{eq:app_varkappag})
and (\ref{eq:app_Vg}), we have that (\ref{eq:hb_elastflow}) reduces to
\begin{equation} \label{eq:app_elastflow}
a'(t)\,\sin2\,\pi\,\rho + r'(t) = 
- (a(t) + r(t)\,\sin2\,\pi\,\rho)
\left(\frac{a(t)}{r(t)} - \frac{a^3(t)}{2\,r^3(t)} \right).
\end{equation}
Differentiating (\ref{eq:app_elastflow}) with respect to $\rho$ yields that
\begin{equation} \label{eq:app_ar2}
a'(t) =  - r(t) \left( \frac{a(t)}{r(t)} - \frac{a^3(t)}{2\,r^3(t)} \right),
\end{equation}
and combining this with (\ref{eq:app_elastflow}) yields that
\begin{equation} \label{eq:app_ar3}
r'(t) = - a(t) \left( \frac{a(t)}{r(t)} - \frac{a^3(t)}{2\,r^3(t)} \right) .
\end{equation}
On setting 
\begin{equation} \label{eq:sigma}
\sigma(t) = \frac{a(t)}{r(t)} > 1  \qquad \Rightarrow \quad 
\sigma'(t) = \frac{a'(t)}{r(t)} - \frac{a(t)\,r'(t)}{r^2(t)}\,,
\end{equation}
it follows from (\ref{eq:app_ar2}) and (\ref{eq:app_ar3}) that
\begin{equation} \label{eq:app_ar4}
\sigma'(t) = \sigma(t)\,(1 - \tfrac12\,\sigma^2(t))\,(\sigma^2(t) - 1)\,,
\end{equation}
which agrees with (3.4) in \cite{DallAcquaS18preprint} for $\lambda=0$.
If $\sigma$ denotes a solution to (\ref{eq:app_ar4}), then it follows from
(\ref{eq:app_ar2}) that $a$ and $r$ satisfy 
\begin{subequations}
\begin{align}
\ddt\,\ln a(t) &= \frac{a'(t)}{a(t)} = 
\tfrac12\,\sigma^2(t) - 1 \quad\Rightarrow\
a(t) = a(0)\,\exp\left(-t + \tfrac12\,\int_0^t \sigma^2(u)\;{\rm d}u \right)
, \label{eq:ODEa} \\
r(t) &= \frac{a(t)}{\sigma(t)} \,.
\label{eq:ODEr}
\end{align}
\end{subequations}
On recalling that $\sigma(t)>1$, we note that
$\sigma(t) = 2^\frac12$ is the only steady state solution of 
(\ref{eq:app_ar4}), and hence circles with ratios 
$\sigma(t) = 2^\frac12$ are steady state solutions
of (\ref{eq:hb_elastflow}). 
Moreover, circles with $\sigma(t) > 2^\frac12$ will rise and
expand indefinitely in time, reducing the ratio $\sigma(t) > 2^\frac12$ as they
do so. On the other hand, circles with $\sigma(t) < 2^\frac12$ will sink and
shrink indefinitely in time, increasing the ratio $\sigma(t) < 2^\frac12$
as they do so.

In order to compute solutions to (\ref{eq:app_ar4}) in practice, we
let $F(y) = y^{-1}\,|1-\tfrac12\,y^2|^{-\frac12}\,(y^2-1)$,
so that $F\in C^\infty((1,2^\frac12)\cup(2^\frac12,\infty))$.
Then $F'(y) = y^{-2}\,(1-\tfrac12\,y^2)^{-1}\,|1-\tfrac12\,y^2|^{-\frac12}$, 
and hence a solution $\sigma$ to (\ref{eq:app_ar4}) satisfies 
\begin{align}
\ddt\, F(\sigma(t)) = \sigma'(t)\,F'(\sigma(t)) = 
\frac{\sigma^2(t)-1}{\sigma(t)\,|1-\tfrac12\,\sigma^2(t)|^\frac12} 
= F(\sigma(t))\,,
\label{eq:app_dFdt}
\end{align}
which means that a solution $\sigma$ to (\ref{eq:app_ar4}) satisfies 
the nonlinear equation
\begin{equation} \label{eq:app_sigma_nonlinear}
F(\sigma(t)) = F(\sigma(0))\,e^t\,.
\end{equation}

\subsection{The hyperbolic disk and the elliptic plane} \label{sec:A2}

Here we consider the metric (\ref{eq:galpha}).
For $\alpha=1$ we then obtain exact solutions for the hyperbolic disk,
while $\alpha=-1$ corresponds to the elliptic plane.
In the latter case these solutions can be related to the
exact solutions for the corresponding geodesic flows on the sphere 
from \cite{curves3d}, recall \S\ref{sec:conformal}.

In particular, on making the ansatz
\begin{equation} \label{eq:appB_ansatz}
\vec x(\rho, t) = r(t)\left[
\cos2\,\pi\,\rho\,\vec\ek_1 + \sin2\,\pi\,\rho\,\vec\ek_2 \right]
\qquad \rho \in I\,,
\end{equation}
for $r(t) > 0$ for all $t \in [0,T]$, it follows from
(\ref{eq:alpha_varkappag}) that 
\begin{equation} \label{eq:appB_varkappag}
\varkappa_g(\rho,t) = \tfrac12\left(1 + \alpha\,r^2(t)\right) [r(t)]^{-1}
 \qquad \qquad \rho \in I\,,\ t \in [0,T]\,.
\end{equation}
Moreover, it holds that
\begin{align}
\mathcal{V}_g = g^\frac12(\vec x)\,\vec x_t\,.\,\vec\nu = 
- 2\,(1 - \alpha\,r^2(t))^{-1}\,r'(t)\,.
\label{eq:appB_Vg}
\end{align}

We now consider curvature flow, (\ref{eq:Vgkg}). 
It follows from (\ref{eq:appB_Vg}) and (\ref{eq:appB_varkappag}) that
\begin{equation} \label{eq:appB_ode}
\ddt\,r^2(t) = \tfrac12 \left( \alpha^2\,r^4(t) - 1 \right)\,.
\end{equation}
Clearly, if $\alpha = 0$ then $r(t) = [r^2(0) - \tfrac12\,t]^\frac12$ is the
well-known shrinking circle solution for Euclidean curvature flow.
For $\alpha \not = 0$, in order to compute solutions to 
(\ref{eq:appB_ode}) in practice, we let $G(y) = \left|(1 + \alpha\,y^2)^{-1}\,
(1 - \alpha\,y^2)\right|^\frac1\alpha$, so that 
$G \in C^\infty((0,|\alpha|^{-\frac12}) \cup (|\alpha|^{-\frac12},\infty))$, 
recall also (\ref{eq:galpha}).
Then $G'(y) = 4\,y\,(\alpha^2\,y^4 - 1)^{-1}\,G(y)$
and hence a solution to (\ref{eq:appB_ode}) satisfies 
\begin{align}
\ddt\, G(r(t)) = r'(t)\,G'(r(t)) = G(r(t))\,,
\label{eq:appB_dGdt}
\end{align}
which means that a solution to (\ref{eq:appB_ode}) satisfies 
the nonlinear equation
\begin{equation} \label{eq:appB_Gr}
G(r(t)) = G(r(0))\,e^t\,,
\end{equation}
which can be inverted explicitly.
In the case $\alpha=-1$, we recall from (\ref{eq:gstereo}) that the circle 
(\ref{eq:appB_ansatz}) of radius $r(t)$ in the elliptic plane corresponds to a
circle of radius $R(t) = 2\,(1 + r^2(t))^{-1}\,r(t)$, 
and at height $(r^2(t) + 1)^{-1}\,(r^2(t) - 1)$, on the unit sphere in $\bR^3$.
It can be easily shown that if $r(t)$ satisfies (\ref{eq:appB_Gr}) for
$\alpha=-1$, then
\begin{equation} \label{eq:c3d_Vgkg}
R(t) = [1 - (1-R^2(0))\,e^{2\,t}]^{\frac12}\,, 
\end{equation}
which is the solution of geodesic curvature flow on the unit sphere,
given by \citet[(5.6)]{curves3d}. Observe that for $R(0) \in (0,1)$, the 
finite extinction time is $T_0 = \tfrac12\,\ln\frac1{1-R^2(0)}$.

As regards (\ref{eq:sdg}), it is obvious from (\ref{eq:appB_varkappag}) that 
any solution of the form (\ref{eq:appB_ansatz}) satisfies 
$\mathcal{V}_g = 0$, and so circles centred at the origin
are stationary solutions to curve diffusion for (\ref{eq:galpha}).

Finally, we consider the elastic flow (\ref{eq:g_elastflow}).
With the ansatz (\ref{eq:appB_ansatz}), on noting (\ref{eq:appB_varkappag}),
(\ref{eq:appB_Vg}) and (\ref{eq:alpha_S0}), 
we have that (\ref{eq:g_elastflow}) reduces to
\begin{equation} \label{eq:appB_elastflow}
- 2\,(1 - \alpha\,r^2(t))^{-1}\, r'(t) = 
- \tfrac1{16}\,(1 + \alpha\,r^2(t))^3\,r^{-3}(t) + \tfrac12\,\alpha\,
(1 + \alpha\,r^2(t))\,r^{-1}(t) \,.
\end{equation}
This implies the ODE
\begin{align} \label{eq:appB_ODE}
\ddt\, r^4(t) & = \tfrac1{8}\,(1 - \alpha^2\,r^4(t))\,
(1 - 6\,\alpha\,r^2(t) + \alpha^2\,r^4(t)) \,.
\end{align}
In the case $\alpha=-1$, we recall from (\ref{eq:gstereo}) that the circle 
(\ref{eq:appB_ansatz}) of radius $r(t)$ in the elliptic plane corresponds to a
circle of radius $R(t) = 2\,(1 + r^2(t))^{-1}\,r(t)$, 
and at height $(r^2(t) + 1)^{-1}\,(r^2(t) - 1)$, on the unit sphere in $\bR^3$.
It can be easily shown that if $r(t)$ satisfies (\ref{eq:appB_ODE}) for
$\alpha=-1$, then $R(t)$ satisfies $\ddt\,R^4(t) = 2\,(1 - R^4(t))$, i.e.\
\begin{equation} \label{eq:c3d_elastflow}
R(t) = [1 - (1-R^4(0))\,e^{-2\,t}]^{\frac14}, 
\end{equation}
which is the solution for geodesic elastic flow on the unit sphere given by
\citet[(5.7)]{curves3d}. Hence in the case $\alpha=-1$ we can obtain $r(t)$
from
\begin{equation} \label{eq:appB_elliptr}
r(t) = R^{-1}(t)\,
\begin{cases}
(1 + [1 - R^2(t)]^\frac12) & r(0) \geq 1\,, \\
(1 - [1 - R^2(t)]^\frac12) & r(0) < 1\,,
\end{cases}
\quad\text{where}\quad
R(t) = [1 - (1-R^4(0))\,e^{-2\,t}]^{\frac14}\,,
\end{equation}
with $R(0) = 2\,(1 + r^2(0))^{-1}\,r(0)$.

Finally, for the case $\alpha=1$, it follows from (\ref{eq:appB_ODE}) that
\begin{align} \label{eq:appB_ODE1}
\ddt\, r^4(t) & = \tfrac1{8}\,(1 - r^4(t))\,(1 - 6\,r^2(t) + r^4(t)) \,.
\end{align}
We note, on recalling (\ref{eq:galpha}),
that $r(t) = 2^\frac12 - 1$ is a stable stationary solution to
(\ref{eq:appB_ODE1}). Hence circles with larger radii will shrink, and circles
with smaller radii will expand. In order to solve (\ref{eq:appB_ODE1}) in
practice, we define
$Q(y) = (1+y^2)^{-1}\,|1 - 6\,y^2 + y^4|^{-\frac12}\,(1-y^2)^2$,
so that $Q\in C^\infty((0,2^\frac12-1) \cup (2^\frac12-1,1))$ with
$Q'(y) 
= 32\,(1-y^4)^{-1}\,(1 - 6\,y^2 + y^4)^{-1}\,y^3\,Q(y)$. 
Hence 
$\ddt\,Q(r(t)) = Q(r(t))$, and so a solution to (\ref{eq:appB_ODE1}) satisfies
the nonlinear equation
\begin{equation} \label{eq:appB_Qr}
Q(r(t)) = Q(r(0))\,e^t\,.
\end{equation}
We remark that an alternative to (\ref{eq:appB_elliptr}) for the case
$\alpha=-1$ is to solve, similarly to (\ref{eq:appB_Qr}), the nonlinear 
equation $Q_-(r(t)) = Q_-(r(0))\,e^t$, where
$Q_-(y) = (1-y^2)^{-1}\,(1 + 6\,y^2 + y^4)^{-\frac12}\,(1+y^2)^2$.

\renewcommand{\theequation}{B.\arabic{equation}}
\setcounter{equation}{0}
\section{Geodesic curve evolution equations} \label{sec:C}

Let $\vec\Phi : H \to \bR^d$ 
be a conformal parameterization of an embedded 
two-dimensional Riemannian manifold 
$\mathcal{M} \subset \bR^d$, i.e.\ $\mathcal{M} = \vec\Phi(H)$ 
and $|\partial_{\vec\ek_1} \vec\Phi(\vec z)|^2 = |\partial_{\vec\ek_2} 
\vec\Phi(\vec z)|^2$ and
$\partial_{\vec\ek_1} \vec\Phi(\vec z) \,.\, 
\partial_{\vec\ek_2} \vec\Phi(\vec z) = 0$ for all $\vec z \in H$. 
Given the parameterization $\vec x : I \to H$ 
of the closed curve $\Gamma \subset H$, we let $\vec y = \vec\Phi \circ \vec x$
be a parameterization of $\mathcal{G} \subset \mathcal{M}$. 
We now show that geodesic curvature flow, geodesic curve 
diffusion and geodesic elastic flow for $\mathcal{G} = \vec y(I)$ on 
$\mathcal{M}$ reduce to (\ref{eq:Vgkg}), (\ref{eq:sdg}) and 
(\ref{eq:g_elastflow}) for the metric $g$ defined by
\begin{equation} \label{eq:gconformal2}
g(\vec z) = |\partial_{\vec\ek_1} \vec\Phi(\vec z)|^2
= |\partial_{\vec\ek_2} \vec\Phi(\vec z)|^2 \qquad \vec z \in H\,.
\end{equation}
For later use we observe that 
\begin{equation} \label{eq:Dphi2}
D\,\vec\Phi(\vec z)\,\vec v\,.\,D\,\vec\Phi(\vec z)\,\vec w
= g(\vec z)\,\vec v\,.\,\vec w \qquad \forall\ z \in H\,,\ \vec v,\vec w \in
\bR^2\,,
\end{equation}
where $D\,\vec\Phi(\vec z) = [\partial_{\vec\ek_1}\,\vec\Phi(\vec z)\
\partial_{\vec\ek_2}\,\vec\Phi(\vec z)] \in \bR^{d\times2}$ for $\vec z\in H$.
A simple computation, on noting 
(\ref{eq:Dphi2}) and (\ref{eq:normg}), yields that 
\begin{equation} \label{eq:normyrho}
\vec y_\rho = D\,\vec\Phi(\vec x)\,\vec x_\rho  \quad\Rightarrow\quad
|\vec y_\rho| = 
g^\frac12(\vec x)\,\vec x_\rho = |\vec x_\rho|_g\,.
\end{equation}
Hence it follows from (\ref{eq:sg}) that
\begin{equation} \label{eq:partialsy}
\partial_{s_y} = |\vec y_\rho|^{-1}\,\partial_\rho 
= |\vec x_\rho|_g^{-1}\,\partial_\rho = \partial_{s_g}\,,
\end{equation}
and so the unit tangent to the curve $\vec y(I)$ is given by
\begin{align}
\vec\tau_{\mathcal{M}} = 
\vec y_{s_y} 
= D\,\vec\Phi(\vec x)\,\vec x_{s_g} = D\,\vec\Phi(\vec x)\,\vec\tau_g\,,
\label{eq:tauM}
\end{align}
on recalling (\ref{eq:nug}). 
Similarly, we define the normal $\vec\nu_\mathcal{M}$ 
as the unit normal to $\vec y(I)$ that is perpendicular to 
$\vec\tau_{\mathcal{M}}$ and that lies in the tangent space to $\mathcal{M}$,
i.e.\
\begin{align} \label{eq:nuM}
\vec\nu_{\mathcal{M}} & = 
( \vec x_{s_g}\,.\,\vec\ek_1\,\partial_{\vec\ek_2}\,\vec\Phi (\vec x)
- \vec x_{s_g}\,.\,\vec\ek_2\,\partial_{\vec\ek_1}\,\vec\Phi (\vec x) )
= D\,\vec\Phi(\vec x)\,
 [- \vec x_{s_g}^\perp] = D\,\vec\Phi(\vec x)\,\vec\nu_g\,,
\end{align}
where we have recalled (\ref{eq:nug}). Note that (\ref{eq:nuM}), 
in the case $d=3$, agrees with the definition of $\vec\nu_{\mathcal{M}}$ in 
\citet[p.\ 10]{curves3d}. We further note from (\ref{eq:tauM}) 
that $\vec{y}_{s_ys_y}$ is perpendicular to $\vec{\tau}_{\mathcal{M}}$, 
and hence
\begin{equation} \label{eq:yss}
\vec y_{s_ys_y} =
\varkappa_{\mathcal{M}}\,\vec{\nu}_{\mathcal{M}}+\vec{\varkappa}_F\,,
\end{equation}
where $\vec\varkappa_F$
is normal to $\mathcal{M}$, and where $\varkappa_{\mathcal{M}}$
denotes the geodesic curvature of $\vec y(I)$.

Clearly, it follows from (\ref{eq:normyrho}) 
that the length of $\vec y(I)$ is given by
\begin{equation} \label{eq:lengthy}
L_{\mathcal{M}} (\vec y) = \int_I |\vec y_\rho| \drho
= \int_I |\vec x_\rho|_g \drho = L_g(\vec x)\,.
\end{equation}
We compute, on noting (\ref{eq:lengthy}), (\ref{eq:partialsy}), (\ref{eq:yss}),
(\ref{eq:nuM}), (\ref{eq:Dphi2}), (\ref{eq:Vg})
and (\ref{eq:nug}), that
\begin{align}
\ddt\,L_{\mathcal{M}} (\vec y) &
= \int_I \frac{\vec y_\rho}{|\vec y_\rho|}\,.\,(\vec y_\rho)_t \drho
= - \int_I \left(\frac{\vec y_\rho}{|\vec y_\rho|}\right)_\rho\,.\,\vec y_t 
\drho
= - \int_I \vec y_{s_ys_y} \,.\,\vec y_t \,|\vec y_\rho| \drho \nonumber \\ &
= - \int_I \varkappa_{\mathcal{M}}\,\vec\nu_{\mathcal{M}}
\,.\,\vec y_t \,|\vec y_\rho| \drho
= - \int_I \varkappa_{\mathcal{M}}\,D\,\vec\Phi(\vec x)\,\vec\nu_g
\,.\,D\,\vec\Phi(\vec x)\,\vec x_t \,|\vec x_\rho|_g \drho
\nonumber \\ &
= - \int_I \varkappa_{\mathcal{M}}\,g(\vec x)\,\vec\nu_g\,.\,\vec x_t 
\,|\vec x_\rho|_g \drho
= - \int_I \varkappa_{\mathcal{M}}\,\mathcal{V}_g
\,|\vec x_\rho|_g \drho\,.
\label{eq:dLMdt}
\end{align}
It follows from (\ref{eq:dLMdt}), (\ref{eq:dLdtV}) and
(\ref{eq:lengthy}) that 
\begin{equation} \label{eq:kgkM}
\varkappa_g = \varkappa_{\mathcal{M}}\,.
\end{equation}
In addition we have from (\ref{eq:nuM}), (\ref{eq:Dphi2}), (\ref{eq:nug})  
and (\ref{eq:Vg}) that
\begin{equation} \label{eq:VM}
\mathcal{V}_{\mathcal{M}} = \vec y_t\,.\,\vec\nu_{\mathcal{M}}
= D\,\vec\Phi(\vec x)\,\vec x_t\,.\,D\,\vec\Phi(\vec x)\,\vec\nu_g
= \mathcal{V}_g\,.
\end{equation}
Hence the flow (\ref{eq:Vgkg}) for $\vec x(I)$ in $H$ is equivalent to
the flow 
\begin{equation} \label{eq:VMkM}
\mathcal{V}_{\mathcal{M}} = \varkappa_{\mathcal{M}}
\end{equation}
for $\vec y(I)$ on $\mathcal{M}$, which is the so-called geodesic curvature
flow, see also \citet[(2.19)]{curves3d} for the case $d=3$. 

Similarly, it follows from (\ref{eq:VM}), (\ref{eq:kgkM}) and
(\ref{eq:partialsy}) that the flow (\ref{eq:sdg}) for $\vec x(I)$ in $H$ 
is equivalent to the the geodesic curve diffusion flow 
\begin{equation} \label{eq:VMsd}
\mathcal{V}_{\mathcal{M}} = - (\varkappa_{\mathcal{M}})_{s_gs_g}\,,
\end{equation}
see also \citet[(2.20)]{curves3d} for the case $d=3$.
Finally, in order to relate (\ref{eq:g_elastflow}) for $d=3$ to
geodesic elastic flow, i.e.\ the $L^2$--gradient flow of
\begin{equation} \label{eq:elasty}
\tfrac12\,\int_I \varkappa_{\mathcal{M}}^2\,|\vec y_\rho| \drho
= \tfrac12\,\int_I \varkappa_g^2\, |\vec x_\rho|_g \drho
= W_g(\vec x)\,,
\end{equation}
recall (\ref{eq:kgkM}), (\ref{eq:normyrho}) and (\ref{eq:Wg}), we note from
from \citet[(2.32)]{curves3d} that geodesic elastic flow for $\vec y(I)$ on
$\mathcal{M}$ is given by
\begin{equation} \label{eq:VMelast}
\mathcal{V}_{\mathcal{M}} = - (\varkappa_{\mathcal{M}})_{s_ys_y} 
- \tfrac12\,\varkappa_{\mathcal{M}}^3 
- \mathcal{K}(\vec y)\,\varkappa_{\mathcal{M}}\,,
\end{equation}
where $\mathcal{K}(\vec z)$ denotes the Gaussian curvature of $\mathcal{M}$ at
$\vec z \in \mathcal{M}$.
It follows from (\ref{eq:VM}), (\ref{eq:kgkM}), (\ref{eq:partialsy}) and
(\ref{eq:S0Gauss})
that (\ref{eq:VMelast}) and (\ref{eq:g_elastflow}) are equivalent.

\end{appendix}

\noindent
{\large\bf Acknowledgements}\\
The authors gratefully acknowledge the support 
of the Regensburger Universit\"atsstiftung Hans Vielberth.

\providecommand\noopsort[1]{}\def\soft#1{\leavevmode\setbox0=\hbox{h}\dimen7=\ht0\advance
  \dimen7 by-1ex\relax\if t#1\relax\rlap{\raise.6\dimen7
  \hbox{\kern.3ex\char'47}}#1\relax\else\if T#1\relax
  \rlap{\raise.5\dimen7\hbox{\kern1.3ex\char'47}}#1\relax \else\if
  d#1\relax\rlap{\raise.5\dimen7\hbox{\kern.9ex \char'47}}#1\relax\else\if
  D#1\relax\rlap{\raise.5\dimen7 \hbox{\kern1.4ex\char'47}}#1\relax\else\if
  l#1\relax \rlap{\raise.5\dimen7\hbox{\kern.4ex\char'47}}#1\relax \else\if
  L#1\relax\rlap{\raise.5\dimen7\hbox{\kern.7ex
  \char'47}}#1\relax\else\message{accent \string\soft \space #1 not
  defined!}#1\relax\fi\fi\fi\fi\fi\fi}

\end{document}